\documentclass[a4paper,12pt]{article}

\usepackage{amsmath}
\usepackage{amsthm}
\usepackage{enumerate}
\usepackage{amssymb,amsfonts,latexsym,mathtools}
\usepackage{mathrsfs}
\usepackage{bm}
\usepackage{cases}
\usepackage{color}

\newcommand{\Levy}{L\'{e}vy}

\newcommand{\deq}{\stackrel{d}{=}}
\newcommand{\ito}{It\^{o}}
\newcommand{\R}{\mathbb{R}}
\newcommand{\F}{\mathscr{F}}
\newcommand{\B}{\mathscr{B}}
\newcommand{\N}{\mathbb{N}}

\newcommand{\e}{\varepsilon}

\newcommand{\cadlag}{c\`{a}dl\`{a}g}
\renewcommand{\P}{\mathbb{P}}
\usepackage{multirow}
\newcommand{\E}{\mathbb{E}}
\usepackage{url}

\numberwithin{equation}{section}

\makeatletter
\renewcommand\section{\@startsection {section}{1}{\z@}%
{-3.5ex \@plus -1ex \@minus -.2ex}%
{2.3ex \@plus.2ex}%
{\normalfont\large\bf}}
\makeatother

\makeatletter
\renewcommand\subsection{\@startsection {subsection}{1}{\z@}%
{-3.5ex \@plus -1ex \@minus -.2ex}%
{2.3ex \@plus.2ex}%
{\normalfont\normalsize\bf}}
\makeatother

\theoremstyle{plain}
\newtheorem{thm}{Theorem}[section]
\newtheorem{lem}[thm]{Lemma}
\newtheorem{cor}[thm]{Corollary}
\newtheorem{prop}[thm]{Proposition}
\theoremstyle{definition}
\newtheorem{Rem}[thm]{Remark}

\allowdisplaybreaks

\usepackage[dvipdfmx]{hyperref}
\hypersetup{
setpagesize=false,
 bookmarksnumbered=true,
 bookmarksopen=true,
 colorlinks=true,
 linkcolor=blue,
 citecolor=red,
}

\begin{document}
\begin{center}
\Large \textbf{A Tanaka-Type Formula for Compact Sets and Equilibrium Measures of \Levy\ Processes}
\end{center}
\begin{center}
Kohki Iba\footnote{
\begin{tabular}[t]{@{}l@{}}
Affiliation: Graduate School of Science, The University of Osaka, Osaka, Japan.\\
E-mail: \url{kohki.iba@gmail.com}
\end{tabular}
} and V\'{i}ctor M. Rivero\footnote{
\begin{tabular}[t]{@{}l@{}}
Affiliation: Centro de Investigacion en Matematicas A.C., Guanajuato, Mexico.\\
Email: \url{rivero@cimat.mx}
\end{tabular}
}
\end{center}
\begin{abstract}
Tanaka's formula is a classical identity for Brownian motion, and Tsukada \cite{Tukada} extended it to \Levy\ processes not necessarily symmetric. From a potential-theoretic point of view, this formula shows that the invariant function for the process killed upon hitting a singleton can be decomposed into the sum of a martingale part and a local time. In this paper, we generalize this singleton setting and derive a Tanaka-type formula for a compact set $B$. To this end, we introduce the equilibrium measure, defined as the rescaled limit of the $q$-capacity measures, and show that the invariant function for the process killed upon hitting $B$ can be represented as the integral, with respect to the equilibrium measure, of the invariant functions associated with processes killed upon hitting singletons, up to an additive constant called the Robin constant. Moreover, when $B$ is an interval, we obtain explicit representations of the equilibrium measure, the Robin constant, and the martingale part for recurrent stable processes as well as for recurrent spectrally negative \Levy\ processes. Finally, we discuss how an analogous Tanaka-type formula can also be established for transient \Levy\ processes.
\end{abstract}

%%%%%%%%%%%%%%%%%%%%%%%%%%%%%%%%%%%%%%%%%%%%%%%%%%%%%%%%%%%%%%%%%%%%%%%%%%%%%%%%%%%%%%%%%%%%%%%%%%%%%%%%%%%%%%%%%%%%%%%%%%%%%%%%%%%%%%%%%%%%%%%%%%%%%%%%%%%%%%%%%%%%%%%%%%%%%%%%%%%%%%%%%%%%%%%%%%%%%%%%%%%%%%%%%%%%%%%%%%%%%%%%%%%%%%%%%%%%%%%%%%%%%%%%%%%%%%%%%%%%%%%%%%%%%%%%%%%%%%%%%%%%%%%%%%%%%%%%

\section{Introduction}
\label{S1}
The famous Tanaka's formula is one of the fundamental identities in the theory of Brownian motion and of semimartingales. For a Brownian motion, it expresses the absolute value of Brownian motion, or more generally the distance from a fixed point, as the sum of a local martingale and a local time:
\begin{align*}
  |B_t-x|=|B_0-x|+\int_0^t \mathrm{sgn}(B_s-x)dB_s+L_t^x,\qquad t\geq 0;
\end{align*}
where $(L_t^x, t\geq 0)$ denotes the local time at $x$. This formula reveals a deep connection between stochastic calculus, local times, and potential theory. Indeed, let $\varphi_{x}:\R\mapsto\R^{+}$ be the function defined by 
\begin{align*}
  \varphi_{x}(y):=|y-x|,\qquad y\in\R,
\end{align*}
for an arbitrarily fixed $x$. By the Markov property, it is easily seen that 
\begin{align*}
  \varphi_{x}(B_t)-\varphi_{x}(B_{0}),\qquad  t\geq 0,
\end{align*}
is an additive functional. Moreover, applying the so-called It\^{o}-Meyer formula to the convex function $\varphi_{x}$, it is obtained that the function $\varphi_{x}$ is excessive for the Brownian motion; the process
\begin{align*}
  Z_t= |B_t-x|=\varphi_{x}(B_t),\qquad t\geq 0,
\end{align*}
is a submartingale and hence a semimartingale. Thus Tanaka's formula provides an explicit Doob-Meyer decomposition of $Z$ into an additive local martingale and a continuous additive functional of bounded variation. In the latter, the process of bounded variation is the local time at $x$. For background on the Doob-Meyer decomposition, see Chapter VII in \cite{DMVolV-VII}. Tanaka's formula can also be derived using the theory of stochastic calculus for Markov processes, see e.g.~\cite{fukushima} and \cite{fukushimaetal}. The extensions and applications of Tanaka's formula have been a very active research topic and it is almost impossible to provide an account of their developments given the vast literature about it, so rather than omitting many, we cite no one at this stage. 

Another motivation for studying $(Z_{t}=\varphi_{x}(B_t), t\geq 0)$ lies in the fact that this is a martingale for the Brownian motion killed at the first hitting time of $x.$ This is actually,  the Radon-Nikodym density process relating the law of a Brownian motion killed at $x$ and that of a Brownian motion conditioned to avoid $x$. The latter law is obtained from the former through the technique of Doob's h-transform. 

As it is well known, Brownian motion is an instance of L\'evy processes, i.e. of processes with right-continuous and left limited paths that have independent and stationary increments. The understanding of L\'evy processes conditioned to avoid the negative half-line or a point has been an active research area, see e.g. \cite{CD}, \cite{Pa}; this is due to its connections with excursion theory, path transformations, penalization theory, among other reasons; see, for instance, \cite{Yano2}, \cite{TY}, \cite{Takeda}, and \cite{IY-3}. Yamada \cite{Yamada}, Salminen-Yor \cite{SY} and Tsukada \cite{T2, Tukada} obtained extensions of Tanaka's formula for symmetric stable process, symmetric \Levy\ processes, for recurrent stable processes and \Levy\ processes not necessarily symmetric, respectively. 

Under the assumption that $X$ is a real-valued \Levy\ process with a continuous resolvent density and some further assumptions, Salminen-Yor \cite{SY} and Tsukada \cite{Tukada} established Tanaka's formula. They proved that there is a function $h,$ called the \emph{renormalized zero resolvent}, such that for any arbitrarily fixed $x\in\R,$ the function $y\mapsto h(y-x),$ $y\in\R,$ plays the same role for the \Levy\ process $X,$ as $\varphi_{x}$ does for the Brownian motion. Details on the renormalized zero resolvent $h$, will be recalled in Subsection \ref{S2.2}. Namely, they proved that there exists a local martingale $(M_t^x, t\geq 0)$, such that
\begin{align}
\label{Tanaka-Levy}
  h(X_t-x)=h(X_0-x)+M_t^x+L_t^x, \qquad t\geq 0;
\end{align}
where $(L_t^x, t\geq 0)$ denotes the local time at $x$ for $X$. Moreover, Yano \cite{Yano2}, Pant\'{i} \cite{Pa}, and Takeda–Yano \cite{TY} proved that the function $h(\cdot-x)$ is an invariant function for $X$ killed at its first hitting time of $x.$ Those authors described the law of the process $X$ conditioned to avoid $x.$ 

D\"{o}ring-Kyprianou-Weissmann \cite{DKW} described the law of a stable L\'evy process conditioned to avoid a closed bounded interval. Iba \cite{Iba} extended the latter and described the law of a \Levy\ process conditioned to avoid a bounded closed set $B$. In their study, the authors described a function $\varphi_B,$ which is invariant for $X$ killed at its first hitting time of the set $B.$ The associated Doob $h$-transform corresponds to that of a L\'evy process conditioned to avoid $B.$   

Our aim is to complete the program described above using instead of a singleton a closed bounded set. To be more precise, in this paper, we will establish the analogue of Tanaka's formula~\eqref{Tanaka-Levy} with $\varphi_{B}$ in place of $h(\cdot-x)$, and the role of the local time at $x$, $L^{x},$ will be played by $L^{B}=(L^{B}_{t}, t\geq0),$ the local time at $B,$ that is, a non-decreasing continuous additive functional that grows only at the times when $X$ visits $B.$ For the best of our knowledge this is the first instance of a Tanaka-like formula where the involved function is invariant for a L\'evy process killed at the first hitting time of a compact set. 

To give a precise statement, we will make a digression to provide further preliminaries and notations.
\subsection{\Levy\ Processes, Resolvent Density, and Local Time}
We will denote by $((X_t, t\ge 0), \{\P_x, x\in \R\})$ a one-dimensional \Levy\ process such that, under $\P_x$, $X_0=x$ a.s. for each $x\in \R$. For a Borel measurable set $A\subset \R$, let $T_A$ be the first hitting time of $A$ for $X$, that is,
\begin{align*}
  T_A:=\inf\{t\ge 0;\ X_t\in A\}.
\end{align*}
For $\lambda\in \R$, we denote the characteristic exponent of $X$ by $\Psi(\lambda)$, that is, $\Psi(\lambda)$ satisfies
\begin{align*}
  \E_0[e^{i\lambda X_t}]=e^{-t\Psi(\lambda)}\qquad \text{for}\ t\ge 0.
\end{align*}
The process $X$ is a recurrent process whenever
\begin{align*}
  \limsup_{q\to0}\int_{B_{r}}\Re\left(\frac{1}{q+\Psi(\lambda)}\right)d\lambda=\infty,\qquad \forall r>0;
\end{align*}
where $B_{r}$ stands for the ball of radius $r$ around zero, and $\Re(z)$ stands for the real part of the complex number $z.$

Throughout this paper, except for Section \ref{App}, we assume that $(X,\P_0)$ is recurrent and that the following condition holds:
\begin{align*}
\label{ConditionA}
  \int_0^\infty \left|\frac{1}{q+\Psi(\lambda)}\right|d\lambda<\infty,\qquad \text{for}\ q>0. \tag{\bf{A}}
\end{align*}
For instance, this condition is satisfied by recurrent stable processes of index $1<\alpha\le 2$.  

Under the condition (\ref{ConditionA}) only, it is known that $X$ has a bounded continuous resolvent density $r_q$, which satisfies
\begin{align*}
  \int f(y)r_q(y-x)dy=\E_x\left[\int_0^\infty e^{-qt}f(X_t)dt\right],\qquad \text{for all}\ q>0\ \text{and}\ \text{for all}\ x\in\R;
\end{align*}
and for any measurable and bounded test function $f:\R\mapsto\R,$ see, e.g., Theorems II.16 and II.19 of \cite{Ber}. 
The resolvent density can be expressed in terms of the characteristic exponent as 
\begin{align}
\label{resol-den}
  r_q(x)=\frac{1}{\pi}\int_0^\infty \Re \left(\frac{e^{-i\lambda x}}{q+\Psi(\lambda)}\right)d\lambda,\qquad x\in\R,\ q>0;
\end{align}
see, e.g., Lemma 2 of \cite{Wink} and Corollary 15.1 of \cite{Tukada}. It is worth noticing that the process is recurrent if and only if $r_q(0)\to \infty$ as $q\to 0$, see, e.g., Theorem I.17 of \cite{Ber} and Theorem 37.5 of \cite{Sato}. Moreover, there is a well-known formula relating the first hitting time of a point $a\in \R$ to the resolvent density:
\begin{align}
\label{HP-Lap}
  \E_x[e^{-qT_{\{a\}}}]=\frac{r_q(a-x)}{r_q(0)},\qquad x\in\R;
\end{align}
see, e.g., Corollary II.18 of \cite{Ber}.

Condition (\ref{ConditionA}) implies the existence of a local time at $a\in \R$, which we denote by $(L_t^a, t\geq 0)$. We can, and we will assume that $(L_t^a, t\ge 0)$ is normalized to satisfy
\begin{align*}
  \E_x\left[\int_0^\infty e^{-qt}dL_t^a\right]=r_q(a-x),\qquad \forall x\in\R, \ \forall q> 0,
\end{align*}
see, e.g., Section V of \cite{Ber}. It is known that $L^a$ is a positive continuous additive functional whose Revuz measure is given by Dirac's mass at $a,$ $\delta_a$, see, e.g., Section 75 of \cite{Sharpe} for background. Moreover, for every positive finite measure $\nu$, the process
\begin{align*}
  \int_\R L^a_{t}\nu(da),\qquad t\geq0;
\end{align*}
is a positive continuous additive functional, see, e.g., Theorem V.5 of \cite{Ber}.

\subsection{Renormalized Zero Resolvent}
\label{S2.2}
Under the assumption that the process is recurrent and that it satisfies the condition (\ref{ConditionA}),  we define
\begin{align*}
  h_q(x):=r_q(0)-r_q(-x),\qquad x\in\R.
\end{align*}
By (\ref{HP-Lap}), $h_q$ is non-negative. By (\ref{resol-den}), we have
\begin{align}
\label{hq-repre}
  h_q(x)=\frac{1}{\pi}\int_0^\infty \Re \left(\frac{1-e^{i\lambda x}}{q+\Psi(\lambda)}\right)d\lambda,\qquad x\in\R.
\end{align}
For any $x\in \R$, the limit
\begin{align*}
 h(x):=\lim_{q\to 0+}h_q(x),\qquad x\in\R,
\end{align*}
exists and is finite. Moreover, this convergence is uniform on compacts, see Theorem 1.1 of Takeda-Yano \cite{TY}. Actually, the existence of this limit $h$ is classical for symmetric \Levy\ processes, see for instance Salminen-Yor \cite{SY}. For \Levy\ processes not necessarily symmetric, however, Takeda-Yano \cite{TY} proved its existence under the mild condition \eqref{ConditionA}. We call the limiting function $h$ the \emph{renormalized zero resolvent}. It is known that $h$ is non-negative, continuous, and subadditive, see Theorem 1.1 of Takeda-Yano \cite{TY}. 

The renormalized zero resolvent has been explicitly determined for several specific \Levy\ processes. For instance, if $X$ is Brownian motion we have that
 \begin{align*}
   h(x)=|x|,\quad x\in\R;
 \end{align*}
see, e.g., Example 5.1 of \cite{TY}. In the case where $X$ is an $\alpha$-stable process with index $\alpha\in (1,2),$ hence whose \Levy\ measure is of the form
\begin{align*}
  \Pi(dx)=\begin{cases}
    c_+|x|^{-\alpha-1}dx&\text{on}\ (0,\infty),\\
    c_-|x|^{-\alpha-1}dx&\text{on}\ (-\infty,0),
  \end{cases}
\end{align*}
where $c_+,c_-\ge 0$ and $c_++c_->0$; we have
\begin{align*}
  h(x)=\frac{1}{K(\alpha)}(1-\beta \mathrm{sgn}(x))|x|^{\alpha-1},\qquad x\in\R,
\end{align*}
with
\begin{align*}
  \beta:=\frac{c_+-c_-}{c_++c_-},\qquad K(\alpha):=-\frac{(c_++c_-)\pi}{\alpha \tan (\frac{\pi\alpha}{2})}\left(1+\beta^2 \tan^2 \left(\frac{\pi\alpha}{2}\right)\right);
\end{align*}
see Section 5 of Yano \cite{Yano2}. Besides, when $X$ is a spectrally negative \Levy\ process, we have
\begin{align}
\label{h-SNLP}
  h(x)=W(x)-\frac{x}{\E_0[X_1^2]},\qquad x\in\R;
\end{align}
where $W$ denotes the zero-scale function of $X$, see Example 5.2 of Pant\'{i} \cite{Pa}.

In any case, the renormalized zero resolvent is an invariant function for the process killed at $\{0\}$, that is, 
\begin{align*}
 h(x)=\E_x[h(X_t),\ t<T_{\{0\}}],\qquad x\in \R, \ t\geq 0;
\end{align*}
see Theorem 2.2 of Pant\'{i} \cite{Pa}, see also Theorem 1.1 of Yano \cite{Yano3}.

%Salminen-Yor \cite{SY} showed that, for symmetric \Levy\ processes, the renormalized zero resolvent exists under the assumptions that every point is regular for itself and that the process is not a compound Poisson process. In the asymmetric case, however, additional assumptions are required. Yano \cite{Yano2}, Pant\'{i} \cite{Pa}, Tsukada \cite{Tukada}, and Takeda–Yano \cite{TY} established the existence of the renormalized zero resolvent under certain integrability conditions, formulated in terms of the characteristic exponent $\Psi$. In particular, the condition imposed by Takeda–Yano \cite{TY} is the weakest among these assumptions and is stated as follows:
%\begin{align*}
%\label{ConditionA}
%  \int_0^\infty \left|\frac{1}{q+\Psi(\lambda)}\right|d\lambda<\infty\qquad \text{for}\ q>0. \tag{\bf{A}}
%\end{align*}
%For the relative strength of the assumptions in these papers, see Section 1.2 of \cite{TY}.

%From a potential-theoretic point of view, the function $h$ is invariant for the process killed upon hitting the singleton $\{0\}$:
%\begin{align}
%  h(x)=\E_x[h(X_t),\ t<T_{\{0\}}].
%\end{align}
%Yano \cite{Yano3} proved this for symmetric \Levy\ processes, while Pant\'{i} \cite{Pa} proved it for asymmetric \Levy\ processes.  

Moreover, Pant\'{i} \cite{Pa} and Takeda-Yano \cite{TY} proved that the h-transform of the process killed at zero, defined using this function $h$, arises when one considers the \Levy\ process conditioned to avoid zero: for all $x\in\R,$ and $t\geq 0,$ we have
\begin{align*}
\lim_{q\to 0}\P_x(\Lambda |\ \bm{e}_q<T_{\{0\}})=\frac{1}{h(x)}\E_x\left[1_\Lambda\cdot h(X_t),\ t<T_{\{0\}}\right], \qquad \text{for}\ \Lambda\in \F_t;
\end{align*}
where $\bm{e}_q$ is a random variable with an exponential distribution of parameter $q>0$, independent of $X$. Furthermore, we mentioned in the first lines of this document, Salminen-Yor \cite{SY} and Tsukada \cite{Tukada} established Tanaka's formula \eqref{Tanaka-Levy} using the renormalized zero resolvent $h$.

 Iba \cite{Iba} considered extensions of the above results, focusing on invariant functions for processes killed on a compact set $B$. For that end, he defined the function
\begin{align}\label{intvarphi}
  \varphi_B(x):=h(x)-\E_x[h(X_{T_B})],\qquad x\in\R.
\end{align}
In Theorem 1.3 of \cite{Iba}, Iba established that
\begin{align}
\label{phiq}
  \lim_{q\to 0}r_q(0)\P_x(\bm{e}_q<T_B)=\varphi_B(x),\qquad x\in\R,
\end{align}
and $\varphi_B$ is invariant for the process killed at $B$, that is,
\begin{align*}
  \varphi_B(x)=\E_x[\varphi_B(X_t),\ t<T_B],\qquad x\in\R.
\end{align*}
In the case of recurrent stable processes, these facts were also obtained by Port \cite{PStable}. Moreover, in Theorem 1.3 of \cite{Iba} it has also been shown that on top of \eqref{phiq}, we have that the following limit exists: for all $x\in\R,$ and $t\geq 0$
\begin{align*}
  \lim_{q\to 0}\P_x(\Lambda|\ \bm{e}_q<T_B)=\frac{1}{\varphi_B(x)}\E_x[1_\Lambda\cdot \varphi_B(X_t),\ t<T_B]\qquad \text{for all}\ \Lambda\in \F_t.
\end{align*} 

%We recall that the aim of this paper is to extend the one-point Tanaka's formula to compact sets. Let $B$ be a nonempty compact set. Iba \cite{Iba} studied a generalization of (\ref{condi-zero}) to compact sets $B$. More precisely, under the same assumptions as Takeda-Yano \cite{TY}, 
%where
%\begin{align}
%   \varphi_B(x):=h(x)-\E_x[h(X_{T_B})]
%\end{align}
%is an invariant function associated with the process killed at the compact set $B$. 
Knowing the integral expression \eqref{intvarphi} of $\varphi_{B}$ in terms of  $h$ and that $h$ admits the decomposition \eqref{Tanaka-Levy}, it is natural to ask whether $\varphi_B(X_{\cdot})$ also admits a Tanaka-type formula. To answer this positively and provide a precise statement, we still require to recall the notion of $q$-capacity measure. 

From Theorem 42.5 and Proposition 42.13 of \cite{Sato}, we know that for $q>0$, there exists a unique measure $m_B^q$ such that
\begin{align*}
  \E_x[e^{-qT_B}]=\int r_q(y-x)m_B^q(dy).
\end{align*}
Moreover, $m_B^q$ is supported on $B$. As usual, we refer to $m_B^q$ as the $q$-capacity measure of $B$. 

For transient processes, it is known that the $q$-capacity measure converges weakly, as $q\to 0$, to the $0$-capacity measure, which is called the equilibrium measure. For recurrent processes, a normalization is required for the $q$-capacity measure to converge towards a non-degenerate measure. That is the content of the following Proposition, that will turn rather relevant for our developments.  
\begin{prop}
\label{main-prop}
Assume that $X$ is recurrent and (\ref{ConditionA}) holds. We have that there is a unique probability measure $\mu_B$ with support in $B$ that arises as the weak limit
  \begin{align*}
    r_q(0)m_B^q\Rightarrow \mu_B,\qquad q\to 0;
  \end{align*}
and a real-valued constant $k(B),$ such that
\begin{align*}
  \lim_{q\to 0}r_q(0)\Big(r_q(0)m_B^q(B)-1\Big)=k(B).
\end{align*} We will refer to $\mu_B$ as the  \emph{equilibrium measure} and to $k(B)$ as the \emph{Robin constant}.
\end{prop}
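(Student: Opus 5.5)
Writing $r_q(y-x)=r_q(0)-h_q(x-y)$ in the defining identity of $m_B^q$ gives
\begin{align*}
  \E_x[e^{-qT_B}]=r_q(0)m_B^q(B)-\int h_q(x-y)\,m_B^q(dy),\qquad x\in\R .
\end{align*}
Set $\mu^q:=r_q(0)m_B^q$, a measure on $B$ with total mass $c_q:=\mu^q(B)$. Multiplying by $r_q(0)$, we get for every $x\in\R$
\begin{align}
  r_q(0)\bigl(1-\E_x[e^{-qT_B}]\bigr)=r_q(0)(1-c_q)+\int h_q(x-y)\,\mu^q(dy). \label{plan-key}
\end{align}
The left-hand side equals $r_q(0)\P_x(\bm{e}_q<T_B)$. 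By \eqref{phiq} it converges to $\varphi_B(x)$, which is finite.

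\textbf{Step 1: $c_q\to1$.} Take $x\in B$ regular for $B$, or simply any $x\in B$ with $T_B=0$ a.s. Such a point exists, since $B$ is not polar by \eqref{ConditionA}. For this $x$ the left side of \eqref{plan-key} vanishes, and the integral term is $\ge0$. Hence $r_q(0)(c_q-1)\ge0$, so $c_q\ge1$. Next take $x$ far from $B$ and divide \eqref{plan-key} by $r_q(0)$. The left side becomes $\P_x(\bm{e}_q<T_B)\to0$ by recurrence. The term $\int h_q(x-y)\mu^q(dy)/r_q(0)$ is at most $c_q\sup_{y\in B}h_q(x-y)/r_q(0)$, and this tends to $0$ because $h_q\to h$ uniformly on compacts while $r_q(0)\to\infty$. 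So $1-c_q\to0$ up to a term that is $o(1)\cdot c_q$. Combined with $c_q\ge1$, this gives $c_q\to1$ and in particular boundedness of $c_q$.

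\textbf{Step 2: tightness and identification of limits.} The measures $\mu^q$ live on the compact set $B$ and have bounded mass, so every sequence $q_n\to0$ has a subsequence with $\mu^{q_n}\Rightarrow\mu$, where $\mu$ is a probability measure on $B$ by Step 1. Uniform convergence $h_q\to h$ on compacts, together with continuity of $h$, gives $\int h_{q_n}(x-y)\mu^{q_n}(dy)\to\int h(x-y)\mu(dy)$. Then \eqref{plan-key} forces
\begin{align*}
  r_{q_n}(0)(1-c_{q_n})\to\varphi_B(x)-\int h(x-y)\mu(dy),
\end{align*}
and the left side does not depend on $x$. Call the limit $-k$. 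We get $\varphi_B(x)=\int h(x-y)\mu(dy)-k$ for all $x$.

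\textbf{Step 3: uniqueness, the main obstacle.} We must show that the pair $(\mu,k)$ is determined by the identity $\int h(x-y)\mu(dy)-k=\varphi_B(x)$ for all $x$. The difference $\sigma$ of two candidate measures is a signed measure on $B$ of total mass $0$, and it satisfies $\int h(x-y)\sigma(dy)=\text{const}$ for all $x$. My first attempt is a Fourier argument. Since $\sigma(B)=0$, \eqref{hq-repre} gives
\begin{align*}
  \int h_q(x-y)\,\sigma(dy)=-\frac1\pi\int_0^\infty\Re\left(\frac{e^{i\lambda x}\,\overline{\hat\sigma(\lambda)}}{q+\Psi(\lambda)}\right)d\lambda .
\end{align*}
The plan is to integrate this identity against $\sigma(dx)$ and let $q\to0$; note that integrating the constant against $\sigma$ gives $0$, because $\sigma(B)=0$. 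This yields
\begin{align*}
  \int_0^\infty |\hat\sigma(\lambda)|^2\,\Re\frac{1}{\Psi(\lambda)}\,d\lambda=0 .
\end{align*}
The limit is justified since $|\hat\sigma(\lambda)|^2=O(\lambda^2)$ near $0$. Because $\Re\Psi>0$ away from $0$, we conclude $\hat\sigma\equiv0$, hence $\sigma=0$. If the Fourier route is delicate, for instance when $\Re(1/\Psi)$ vanishes on a set, the fallback is a probabilistic argument. Apply the identity at $X_{T_{\{a\}}}$ and use that $h(\cdot-y)$ is invariant for the process killed at $y$, which exploits invariance for singletons. Once $\mu$ is unique, $k$ is determined as well. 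Then every subsequential limit coincides, which gives the convergence of the whole family $\mu^q$ to $\mu_B$ and of $r_q(0)(c_q-1)$ to $k(B)$.
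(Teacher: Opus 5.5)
Your proposal is correct. Steps 1--2 are essentially the paper's existence argument, with some streamlining:
\begin{itemize}
\item The paper bounds $c_q$ by evaluating the pre-limit identity at a point of $B$ and using Tsukada's majorant $M$. This gives $c_q-1\le c_q\sup_{B-B}h_q/r_q(0)$ directly, so your detour through a point far from $B$ is unnecessary.
\item You avoid extracting a further subsequence for $r_q(0)(c_q-1)$, correctly observing that its convergence is forced by the identity once the other two terms converge.
\end{itemize}

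The genuine difference is Step 3. The paper proves uniqueness with tempered distributions. Lemma \ref{lem-uniq} takes Fourier transforms of $\mu H_q$, which needs the linear growth of $h$ from Takeda--Yano. It then multiplies by $q+\widehat\Psi$ and uses $\widehat\Psi\,\delta=0$ together with $qr_q(0)\to0$.

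You instead pair the identity with $\sigma$ itself, so you only need the energy $\iint h(x-y)\sigma(dx)\sigma(dy)=-\frac1\pi\int_0^\infty|\hat\sigma(\lambda)|^2\Re(1/\Psi(\lambda))\,d\lambda$. This is exactly the computation the paper performs later, in Lemma \ref{finite-lem2}, for atomic measures. Your version requires only the following:
\begin{itemize}
\item Fubini, justified by (A).
\item Either dominated convergence, via $\Re\frac{1}{q+\Psi}\le\frac{1}{|\Psi|}$ and $\int_0^\infty\frac{\lambda^2\wedge 1}{|\Psi(\lambda)|}d\lambda<\infty$, or simply Fatou, since the integrands are nonnegative.
\end{itemize}
It uses $\sigma(\R)=0$, which is automatic because both candidates are probability measures. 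It sidesteps the delicate step of multiplying tempered distributions by the generally non-smooth $\widehat\Psi$ and passing to limits in such products.

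The one point you should make explicit is that $\Re\Psi>0$ on $\R\setminus\{0\}$. Suppose $\Re\Psi(\lambda_0)=0$ for some $\lambda_0\neq0$. Then there is no Gaussian part and $\Pi$ is carried by the lattice $(2\pi/\lambda_0)\mathbb{Z}$, hence is finite. That gives $|\Psi(\lambda)|\le C(1+|\lambda|)$, which contradicts (A). So $\hat\sigma$, being continuous, vanishes identically, and your probabilistic fallback is not needed.
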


%One of the motivations in this paper is to shed further light into 
%
%  
%
%
%
%
%Ignited by the contributions of , in this paper we aim to provide what, for the best of our knowledge, is the first instance of a Tanaka like formula for bounded sets. That is, rather than considering the local time at a point we consider the local time, as Markov process, at a compact set and we provide an explicit formula 
\subsection{Main results}
We have now all the elements to state our main results. The first one, sheds further light on the structure of the function $\varphi_B$ by providing an integral representation of $\varphi_B$ in terms of the renormalized zero resolvent $h$.
\begin{thm}
\label{main1}
Assume that $X$ is recurrent and (\ref{ConditionA}) holds, and let $B$ be a nonempty compact set. The equilibrium measure $\mu_{B}$ and the Robin constant from Proposition~\ref{main-prop} are related to $\varphi_{B}$ by the equation
 \begin{align}
  \label{main1-eq}
     \varphi_B(x)=\int_{y\in B} h(x-y)\mu_B(dy)-k(B),\qquad x\in\R.
  \end{align}
\end{thm}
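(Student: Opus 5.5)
We will start from the defining identity of the $q$-capacity measure, $\E_x[e^{-qT_B}]=\int_B r_q(y-x)\,m_B^q(dy)$, and rewrite it using $r_q(y-x)=r_q(0)-h_q(x-y)$, which is the definition of $h_q$ evaluated at $x-y$. This gives
\begin{align*}
  \P_x(\bm{e}_q<T_B)=1-\E_x[e^{-qT_B}]=1-r_q(0)m_B^q(B)+\int_B h_q(x-y)\,m_B^q(dy).
\end{align*}
Multiplying by $r_q(0)$, we obtain the exact identity
\begin{align*}
  r_q(0)\P_x(\bm{e}_q<T_B)=-r_q(0)\Big(r_q(0)m_B^q(B)-1\Big)+\int_B h_q(x-y)\,r_q(0)m_B^q(dy),
\end{align*}
valid for every $q>0$ and $x\in\R$. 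The whole proof then consists in letting $q\to0$ in each of the three terms.

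The left-hand side converges to $\varphi_B(x)$ by \eqref{phiq}, which is Theorem 1.3 of \cite{Iba}. The first term on the right converges to $-k(B)$ by Proposition~\ref{main-prop}. For the integral term, we split
\begin{align*}
  \int_B h_q(x-y)\,r_q(0)m_B^q(dy)=\int_B \big(h_q(x-y)-h(x-y)\big)\,r_q(0)m_B^q(dy)+\int_B h(x-y)\,r_q(0)m_B^q(dy).
\end{align*}
The first piece is bounded in absolute value by $\sup_{z\in x-B}|h_q(z)-h(z)|\cdot r_q(0)m_B^q(B)$. Here the supremum tends to $0$ because $h_q\to h$ uniformly on compacts (Theorem 1.1 of \cite{TY}) and $x-B$ is compact. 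The total mass $r_q(0)m_B^q(B)$ stays bounded, since it converges to $1$ by Proposition~\ref{main-prop}. For the second piece, $y\mapsto h(x-y)$ is continuous on the compact set $B$, so it agrees on $B$ with a bounded continuous function on $\R$. All the measures $r_q(0)m_B^q$ and $\mu_B$ are supported in $B$, so the weak convergence $r_q(0)m_B^q\Rightarrow\mu_B$ gives convergence of this piece to $\int_B h(x-y)\,\mu_B(dy)$. Combining the three limits yields \eqref{main1-eq}.

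Granting Proposition~\ref{main-prop} and \eqref{phiq}, the only delicate step is the passage to the limit in the integral term. Both the integrand $h_q$ and the measure $r_q(0)m_B^q$ depend on $q$, so weak convergence alone is not enough. This is why the uniform convergence on compacts of $h_q$ to $h$ and the boundedness of the masses are essential. If Proposition~\ref{main-prop} had to be proved at the same time, the main obstacle would instead be the existence of the weak limit and of $k(B)$. The same identity, together with tightness from the support in $B$, shows that any subsequential weak limit $\nu$ satisfies $\varphi_B(x)=\int h(x-y)\,\nu(dy)-c$. The uniqueness of $\nu$ and $c$ would then have to come from a uniqueness argument for such representations. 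One route is to use the behavior as $|x|\to\infty$; another is a Fourier argument, using that the representation determines $\nu$ through $\Psi$.
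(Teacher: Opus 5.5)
Your argument is correct once Proposition~\ref{main-prop} and \eqref{phiq} are granted. The limit passage itself is exactly the one carried out in the paper: uniform convergence $h_q\to h$ on the compact set $x-B$, bounded masses $r_q(0)m_B^q(B)$, and weak convergence tested against a bounded continuous extension of $y\mapsto h(x-y)$. The difference is the logical order. The paper does not have Proposition~\ref{main-prop} available in advance; it proves it simultaneously with \eqref{main1-eq}. So the substantive content is in the part you only sketch at the end, and two points there need to be filled in.

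First, support in $B$ alone does not give relative compactness of $(r_q(0)m_B^q)$; you also need a uniform bound on the masses. The paper gets it by evaluating your pre-limit identity at a point $x\in B$, where $r_q(0)\P_x(\bm{e}_q<T_B)=0$. This gives $0\le r_q(0)(r_q(0)m_B^q(B)-1)=\int_B h_q(x-y)\,r_q(0)m_B^q(dy)\le \sup_{z\in B-B}M(z)\cdot r_q(0)m_B^q(B)$, where $M$ is the $q$-independent bound on $h_q$ from Lemma 15.5 of \cite{Tukada}. Hence $r_q(0)m_B^q(B)\le r_q(0)/(r_q(0)-\sup_{B-B}M)\to 1$, and the constants $k_q(B)$ are bounded as well, so a subsequence along which both converge can be extracted.

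Second, uniqueness of the pair $(\mu_B,k(B))$ is what upgrades subsequential convergence to convergence as $q\to0$, and the paper proves it via Fourier transforms. If $\mu=\mu_B-\mu_B'$ satisfies $\int h(\cdot-y)\mu(dy)\equiv k$, then $(q+\widehat{\Psi}(\xi))\F(\mu H_q)(\xi)=2\pi q r_q(0)\mu(\R)\delta(\xi)-\F(\mu)(-\xi)$. Letting $q\to0$, and using Lemma~\ref{lem-uniq}, $\widehat{\Psi}(0)=0$ and $qr_q(0)\to0$, this forces $\F(\mu)=0$. Your alternative via the behaviour as $|x|\to\infty$ does not obviously work. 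In general only the linear growth $h(x)/|x|\to 1/\E_0[X_1^2]$ is known, which is far too little information to identify a signed measure.
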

Port \cite{PStable} obtained a similar representation formula for recurrent stable processes, naming $\mu_B$ the equilibrium measure, and $k(B)$ the Robin constant; we preserve the names in the general setting for consitency. The proof of this theorem is given in Section \ref{S3}. 

The claimed Tanaka-type formula for compact sets will actually be obtained as a Corollary of the following result which happens to be more general than needed and of interest in itself. Indeed, instead of focusing on the local time at the compact set $B,$ we can consider a rather general continuous additive functional $A$ of $X,$ characterized via its Revuz measure $\nu_{A}$. Given such an additive functional, we wonder whether it admits a representation of the type \eqref{Tanaka-Levy}. Namely, whether there exists a function $H^{A}$ and a local martingale $M^{A}$ such that the following relation holds
\begin{align*}
  H^{A}(X_{\cdot})=H^A(X_0)+M^{A}_{\cdot}+A_{\cdot}
\end{align*}
This falls within the deep theory of stochastic calculus for Markov processes and the interested reader can look to the book by Fukushima, Oshima and Takeda~\cite{fukushimaetal} for symmetric Markov processes and to the article by Eisenbaum and Walsh~\cite{EisenbaumWalsh} for real-valued \Levy\ processes, and Walsh \cite{Walsh1} for more general Markov processes, to cite but three influential works in this topic. It is worth emphasizing that the relation to these works is somehow indirect, as in here, rather than starting with a function, say $u,$ and knowing whether some version of It\^{o}'s formula holds for $u(X_{\cdot}),$ which would lead to the appearance of a local martingale and an additive functional, we start with an additive functional $A,$ and build the corresponding function $H^{A}$ and the local martingale. A precise statement is given below.   
\begin{thm}
\label{main2}
Let $(A_t, t\ge 0)$ be a positive continuous additive functional of $X$, and let $\nu$ be its Revuz measure. Assume that $\nu$ is a finite measure and has compact support. Then there exists a martingale $M^\nu$ such that
\begin{align}
\label{main2-eq1}
  \int_\R h(X_t-y)\nu(dy)=\int_\R h(x-y)\nu(dy)+M_t^\nu +A_t,\qquad t\geq 0.
\end{align}
In particular, it holds that
\begin{align}
\label{main2-eq2}
\varphi_B(X_t)=\varphi_B(x)+M_t^{\mu_B}+\int_B L_t^y \mu_B(dy),\qquad t\geq 0.
\end{align}
\end{thm}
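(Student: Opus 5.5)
The approach is to work at the level of $q$-potentials, where everything is exact, and then let $q\to 0$. The first ingredient is the classical fact (Revuz correspondence; see Section V of \cite{Ber} or \cite{Sharpe}) that the $q$-potential of $A$ is $U_q^\nu(x):=\int r_q(y-x)\nu(dy)$. That is,
\begin{align*}
U_q^\nu(x)=\E_x\left[\int_0^\infty e^{-qs}dA_s\right].
\end{align*}
By the Markov property, the process
\begin{align*}
N^q_t:=e^{-qt}U^\nu_q(X_t)+\int_0^t e^{-qs}dA_s
\end{align*}
is a $\P_x$-martingale, namely $\E_x[\int_0^\infty e^{-qs}dA_s\mid\F_t]$. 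Now set $H_q(z):=\int h_q(z-y)\nu(dy)=\nu(\R)r_q(0)-U^\nu_q(z)$, using $h_q(z-y)=r_q(0)-r_q(y-z)$. Then
\begin{align*}
H_q(X_t)-H_q(x)=U_q^\nu(x)-U_q^\nu(X_t).
\end{align*}
Combining with the martingale $N^q$, this equals
\begin{align*}
\bigl[N^q_t-N^q_0\bigr]+(1-e^{-qt})\,U^\nu_q(X_t)\;-\;e^{-qt}\cdot 0+\int_0^t(1-e^{-qs})\,dA_s-\int_0^t dA_s\cdot(-1)\ldots
\end{align*}
The cleaner way to organize this is to define
\begin{align*}
M^{\nu,q}_t:=H_q(X_t)-H_q(x)-A_t.
\end{align*}
One then checks that it equals $-(N^q_t-N^q_0)$ plus the error term
\begin{align*}
E^q_t:=-(1-e^{-qt})U_q^\nu(X_t)-\int_0^t(1-e^{-qs})dA_s .
\end{align*}

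The second step is to pass to the limit $q\to0$ pathwise and in $L^1(\P_x)$ for fixed $t$. By Theorem 1.1 of \cite{TY}, $h_q\to h$ uniformly on compacts. Since $\nu$ has compact support and $X$ has bounded range on $[0,t]$ almost surely, $H_q(X_t)\to \int h(X_t-y)\nu(dy)$ almost surely. For $E^q_t$, I bound $U^\nu_q\le \nu(\R) r_q(0)$, which gives $|E^q_t|\le qt\,\nu(\R)r_q(0)+qtA_t$. Since $q\,r_q(0)\to0$ (because $q r_q(0)=q\int_0^\infty e^{-qs}p$-type bounds; more concretely $q r_q(0)=\frac{q}{\pi}\int\Re\frac{1}{q+\Psi}\to0$ by dominated convergence under (\ref{ConditionA})), we get $E^q_t\to0$ in $L^1$, using $\E_x[A_t]\le e^{t}U^\nu_1(x)<\infty$. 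Hence $M^{\nu,q}_t$ converges almost surely to
\begin{align*}
M^\nu_t:=\int h(X_t-y)\nu(dy)-\int h(x-y)\nu(dy)-A_t .
\end{align*}
To conclude that $M^\nu$ is a martingale, I need $L^1$ convergence of $N^q_t-N^q_0=-(M^{\nu,q}_t-E^q_t)$, since martingale-ness passes through $L^1$ limits. This reduces to the uniform integrability of $\{H_q(X_t)\}_{q\le1}$.

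I expect this uniform integrability to be the main obstacle. My plan is to use $0\le h_q(z)\le \sup_{q\le 1}h_q(z)$ together with a subadditivity-type growth bound. From (\ref{hq-repre}), $|1-e^{i\lambda z}|\le \min(2,|\lambda z|)$ gives $h_q(z)\le C(1+|z|)$ uniformly in $q\le1$, at least when $\int_0^1\lambda|\Psi|^{-1}d\lambda<\infty$. Failing that, I would use subadditivity of $h_q$ (which holds by the hitting-time interpretation $h_q(a+b)\le h_q(a)+h_q(b)$) to get linear growth. It then remains to control $\E_x[\sup_q H_q(X_t)^{1+\epsilon}]$ or $\E_x|X_t|$. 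A cleaner alternative, avoiding moment assumptions, is the following. The process $\int h(X_t-y)\nu(dy)$ is $\nu$-averaged, and $h(X_t-y)-h(x-y)=M^y_t+L^y_t$ is already a martingale decomposition by Tsukada's formula (\ref{Tanaka-Levy}), with $\E_x[h(X_t-y)]=h(x-y)+\E_x[L^y_t]$. In particular $\E_x[h(X_t-y)]$ is finite and continuous in $y$. Combining this with Fatou's lemma and the identity $\E_x[H_q(X_t)]=H_q(x)+\E_x[\int_0^te^{-qs}dA_s]+O(q r_q(0) t)$ gives convergence of expectations. A.s. convergence of nonnegative variables plus convergence of means then yields $L^1$ convergence by Scheffé's lemma.

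Finally, (\ref{main2-eq2}) follows by taking $\nu=\mu_B$, which is finite with compact support by Proposition \ref{main-prop}, and $A_t=\int_B L^y_t\mu_B(dy)$. This is a positive continuous additive functional whose Revuz measure is $\int\delta_y\mu_B(dy)=\mu_B$. By Theorem \ref{main1}, $\int h(\cdot-y)\mu_B(dy)=\varphi_B+k(B)$, and the constant $k(B)$ cancels on both sides.
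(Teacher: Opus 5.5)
Your skeleton matches the paper's. You write $\int h_q(\cdot-y)\nu(dy)=\nu(\R)r_q(0)-u^A_q$, get an exact decomposition at level $q$ from the Markov property, and remove the error term via $q r_q(0)\to 0$. Your $E^q_t$ differs only cosmetically from the paper's $q\int_0^t u^A_q(X_s)ds$, and your bound using $\E_x[A_t]\le e^t\nu(\R)r_1(0)$ is fine. You then pass to an $L^1(\P_x)$ limit at fixed $t$ so the martingale property survives.

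The real difference is how you get $L^1$ convergence of $\int h_q(X_t-y)\nu(dy)$. The paper uses a $q$-uniform integrable majorant, $h_q(z)\le \frac{2}{\pi}\int_0^\infty \frac{1-\cos(\lambda z)}{|\Psi(\lambda)|}d\lambda$. Its expectation at $z=X_t-y$ is bounded via $\E_x[e^{i\lambda X_t}]=e^{i\lambda x-t\Psi(\lambda)}$, so only $\int_0^1\lambda^2|\Psi(\lambda)|^{-1}d\lambda$ and $\int_1^\infty|\Psi(\lambda)|^{-1}d\lambda$ enter. No moments of $X_t$ are needed, and the bound $C_2y^2+C_1|y|+C_0$ gives dominated convergence.

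Your first idea, linear growth of $h_q$, cannot replace this. Under (\textbf{A}) a recurrent $X_t$ need not be integrable, e.g.\ $\Psi(\lambda)=\lambda^2/2+|\lambda|$, as you suspected.

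Your Scheff\'e route does work and needs no estimate on $\Psi$. But it is a reduction to the singleton case rather than an independent proof. It requires the singleton formula with a \emph{true} martingale part, which is the case $\nu=\delta_y$, $A=L^y$ of the present theorem; Tsukada obtains it through exactly the Fourier estimate above. With only a local martingale, localization gives $\E_x[h(X_t-y)]\le h(x-y)+\E_x[L^y_t]$, which is the useless direction.

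Make the combining step precise, because Fatou's lemma plays no role there. Fatou gives only
$\E_x[\int h(X_t-y)\nu(dy)]\le \int h(x-y)\nu(dy)+\E_x[A_t]=\lim_{q\to0}\E_x[\int h_q(X_t-y)\nu(dy)]$, whereas Scheff\'e needs equality. Integrating Tsukada's identity against $\nu$ (Tonelli) gives the left side as $\int h(x-y)\nu(dy)+\int \E_x[L^y_t]\nu(dy)$. You must therefore still check that $\int\E_x[L^y_t]\nu(dy)=\E_x[A_t]$. This holds because both are right-continuous nondecreasing functions of $t$ whose Laplace transforms $\int_0^\infty e^{-qt}(\cdot)\,dt$ equal $q^{-1}\int r_q(y-x)\nu(dy)$ for every $q>0$. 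Equivalently, $A$ and $\int L^y\nu(dy)$ are PCAFs with the same Revuz measure. With that line added, your argument is complete.
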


The proof of this theorem is given in Section \ref{S4}.

Below we will describe some examples in detail. We will see that when $B$ is a finite set, the equilibrium measure and the Robin constant can be described in terms of a matrix whose entries are given by the renormalized zero resolvent; see Subsection \ref{SS4.1}. For recurrent stable processes and intervals $B$, we compute the equilibrium measure, the Robin constant, and an explicit representation of the martingale part in the Tanaka-type formula; see Subsections \ref{SS5.1} and \ref{SS5.2}. For recurrent
spectrally negative \Levy\ processes and intervals $B$, we compute the equilibrium measure and the Robin constant; see Section \ref{S6}. 

All the above discussion concerns recurrent \Levy\ processes. The transient case will be presented in Section \ref{App}.

\subsection*{Organization of the paper}
The rest of this paper is organized as follows. Our main results, Theorems \ref{main1} and \ref{main2} are proved in Section \ref{S3} and in Section \ref{S4}, respectively. Then in Subsection \ref{SS4.1}, we compute the equilibrium measure and the Robin constant for a finite set. In Section \ref{S5}, we consider recurrent stable processes and the interval case. In particular, in Subsection \ref{SS5.1}, we compute the equilibrium measure and the Robin constant for an interval in the case of a stable process. In Subsection \ref{SS5.2}, we give a representation of the martingale part of Tanaka's formula. In Section \ref{S6}, we consider recurrent spectrally negative \Levy\ processes and describe the equilibrium measure and the Robin constant in the interval case. Finally, in Section \ref{App}, we discuss the transient case.

%%%%%%%%%%%%%%%%%%%%%%%%%%%%%%%%%%%%%%%%%%%%%%%%%%%%%%%%%%%%%%%%%%%%%%%%%%%%%%%%%%%%%%%%%%%%%%%%%%%%%%%%%%%%%%%%%%%%%%%%%%%%%%%%%%%%%%%%%%%%%%%%%%%%%%%%%%%%%%%%%%%%%%%%%%%%%%%%%%%%%%%%%%%%%%%%%%%%%%%%%%%%%%%%%%%%%%%%%%%%%%%%%%%%%%%%%%%%%%%%%%%%%%%%%%%%%%%%%%%%%%%%%%%%%%%%%%%%%%%%%%%%%%%%%%%%%%%%

\section{Equilibrium Measure and Robin Constant for Compact Sets}
\label{S3}
In this section, we prove  Proposition~\ref{main-prop} and Theorem \ref{main1}. We first prove existence, in doing so we will prove the validity of the equation \eqref{main1-eq}, and then we will prove the uniqueness property. We will henceforth assume that $B$ is a compact set.

\begin{proof}[Proof of the existence part in Proposition~\ref{main-prop} and the representation \eqref{main1-eq}]
Recall that for $q>0$, the $q$-capacity measure of $B$, $m_B^q$, exists and satisfies
\begin{align}
\label{q-cap}
  \E_x[e^{-qT_B}]=\int r_q(y-x)m_B^q(dy),\qquad x\in \R.
\end{align}
Moreover, $m_B^q$ is supported on $B$. See, e.g., Theorem 42.5 and Proposition 42.13 of \cite{Sato}. We define the measure
\begin{align*}
  \mu_B^q(dy):=r_q(0) m_B^q(dy).
\end{align*}
Then, by Fubini's theorem and the definition of $h_q$, we have for any $x\in \R$,
\begin{align*}
\label{q-eq}
  \varphi^{q}_B(x)&:=r_q(0)\P_x(\bm{e}_q<T_B)\\
  &=r_q(0)\Big(1-\E_x[e^{-qT_B}]\Big)\\
  &=r_q(0)-\int_B r_q(y-x)\mu_B^q(dy)\\
  &=\int_B h_q(x-y)\mu_B^q(dy)-r_q(0)\Big(\mu_B^q(B)-1\Big).
   \stepcounter{equation}\tag{\theequation}
\end{align*}
So, denoting
\begin{align*}
  k_{q}(B):=r_q(0)\Big(\mu_B^q(B)-1\Big),
\end{align*}
the above expression reads 
\begin{align*}
  \varphi^{q}_B(x)&:=r_q(0)\P_x(\bm{e}_q<T_B)\\
  &=\int_B h_q(x-y)\mu_B^q(dy)-k_{q}(B),\qquad x\in\R;
\end{align*}
which is the pre-limit version of \eqref{main1-eq}.

By non-negativity of $h_q$ and (\ref{hq-repre}), we have
\begin{align*}
\label{hq-bdd}
  h_q(x)&\le h_q(x)+h_q(-x)\\
  &=\frac{2}{\pi}\int_0^\infty \Re \left(\frac{1-\cos (x\lambda)}{q+\Psi(\lambda)}\right)d\lambda\\
  &\le \frac{2}{\pi}\int_0^\infty\left|\frac{1-\cos (x\lambda)}{q+\Psi(\lambda)}\right|d\lambda\\
  &\le \frac{2}{\pi}\int_0^\infty\left|\frac{1-\cos (x\lambda)}{\Psi(\lambda)}\right|d\lambda\\
  &\le \frac{2}{\pi}\int_0^\infty \frac{(\lambda x)^2\wedge 2}{|\Psi(\lambda)|}d\lambda \\
  &=:M(x).
    \stepcounter{equation}\tag{\theequation}
\end{align*}
By Lemma 15.5 of Tsukada \cite{Tukada}, the function $M$ is finite and continuous. Since $\varphi_B^q$ vanishes on $B$ and $h_q$ is continuous, for $x\in B$, we have
\begin{align*}
0\leq r_q(0)\Big(\mu_B^q(B)-1\Big)&=\int_B h_q(x-y)\mu_B^q(dy)\\
&\le \sup_{z\in B-B}h_q(z)\cdot \mu_B^q(B)\\
&\le \sup_{z\in B-B}M(z)\cdot \mu_B^q(B).
\end{align*}
Thus, since $r_q(0)\to \infty$ as $q\to 0$, we have
\begin{align*}
  \mu_B^q(B)\le \frac{r_q(0)}{\displaystyle r_q(0)-\sup_{z\in B-B}M(z)}\to 1,\qquad \text{as}\ q\to 0.
\end{align*}
Hence, $(\mu_B^q(B))_{0<q<1}$ is uniformly bounded, and because $B$ is a compact set, it is a consequence of Prohorov's theorem that $(\mu_B^q)_{0<q<1}$ is tight, see, e.g., Corollary 13.30 of \cite{Klenke}. Therefore, there exist a subsequence $(q_n, n\geq 0)$ and a measure $\mu_B$ such that
\begin{align*}
  \mu_{B}^{q_n}\Rightarrow \mu_B,\qquad \text{as}\ n\to\infty.
\end{align*}
Also, because
\begin{equation*}
0\leq r_{q_n}(0)\Big(\mu_B^{q_n}(B)-1\Big) \leq  \frac{r_{q_n}(0)\sup_{z\in B-B}M(z)}{\displaystyle r_{q_n}(0)-\sup_{z\in B-B}M(z)}, \qquad \forall n\geq0,
 \end{equation*}
 we can extract a further subsequence of $(q_{n^{\prime}}, n^{\prime}\geq 0)$ for which there exists a constant $k(B)$ such that
\begin{equation*}
r_{q_{n^{\prime}}}(0)\Big(\mu_B^{q_{n^{\prime}}}(B)-1\Big)\to k(B), \qquad \text{as}\ n^{\prime}\to\infty.
\end{equation*}

Note that $\mu_B$ is a probability measure supported on $B$. Since $h_{q_n}$ converges to $h$ uniformly on compact sets, we have
\begin{align*}
  &\left|\int_B h_{q_n}(x-y)\mu_B^{q_n}(dy)-\int_B h(x-y)\mu_B(dy)\right|\\
  &\qquad \le \int_B \Big|h_{q_n}(x-y)-h(x-y)\Big|\mu_B^{q_n}(dy)+\left|\int_B h(x-y)\mu_B(dy)-\int_B h(x-y)\mu_B^{q_n}(dy)\right|\\
  &\qquad \le \max_{z\in x-B}|h_{q_n}(z)-h(z)| \cdot \mu_B^{q_n}(B)+\max_{z\in x-B}h(z)\cdot \Big|\mu_B(B)-\mu_B^{q_n}(B)\Big|\\
  &\qquad \to 0,
\end{align*}
as $n\to \infty.$ Therefore, letting $q_{n^{\prime}}\to 0$ in (\ref{q-eq}) together with \eqref{phiq}, we obtain (\ref{main1-eq}).
\end{proof}

Before proving uniqueness, we prepare the following lemma.

\begin{lem}
\label{lem-uniq}
  Let $\nu$ be a finite signed measure with compact support. We define
  \begin{align*}
  \nu H_q(x)&:=\int h_q(x-y)\nu(dy),\qquad x\in\R,\\
    \nu H(x)&:=\int h(x-y)\nu(dy),\qquad x\in\R.
  \end{align*}
  Then the Fourier transform of $\nu H_q$ converges to that of $\nu H$ as $q\to 0$:
  \begin{align*}
    \F(\nu H_q)(\xi)\to \F(\nu H)(\xi).
  \end{align*}
\end{lem}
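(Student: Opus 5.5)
The plan is to read the Fourier transform here in the sense of tempered distributions. The functions $\nu H_q$ and $\nu H$ are in general not integrable: for stable processes $h$ grows like $|x|^{\alpha-1}$. So the claim to prove is
\begin{align*}
  \langle \F(\nu H_q),\phi\rangle=\int_\R \nu H_q(x)\,\F\phi(x)\,dx\;\longrightarrow\;\int_\R \nu H(x)\,\F\phi(x)\,dx=\langle \F(\nu H),\phi\rangle,\qquad q\to0,
\end{align*}
for every Schwartz function $\phi$. Since $\F\phi$ is again a Schwartz function, it is enough to show $\int \nu H_q\,\psi\,dx\to\int \nu H\,\psi\,dx$ for every Schwartz $\psi$. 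This is a dominated convergence argument, which needs two ingredients: pointwise convergence, and a bound of polynomial growth that is uniform in $q$.

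\emph{Step 1 (pointwise convergence).} Fix $x$ and let $K$ be the compact support of $\nu$. The set $x-K$ is compact, and $h_q\to h$ uniformly on compacts (Theorem 1.1 of Takeda--Yano \cite{TY}). Hence
\begin{align*}
  |\nu H_q(x)-\nu H(x)|\le \max_{z\in x-K}|h_q(z)-h(z)|\cdot|\nu|(\R)\to0,\qquad q\to0.
\end{align*}

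\emph{Step 2 (uniform polynomial bound).} By the estimate \eqref{hq-bdd}, we have $0\le h_q\le M$ for all $q>0$, where $M(x)=\frac{2}{\pi}\int_0^\infty \frac{(\lambda x)^2\wedge 2}{|\Psi(\lambda)|}d\lambda$ is finite and continuous. For $|x|\ge1$ we have $(\lambda x)^2\wedge2\le x^2\big((\lambda^2)\wedge 2\big)$, so $M(x)\le x^2M(1)$. Therefore $M(x)\le C(1+x^2)$ for all $x$, with $C=\max_{|z|\le1}M(z)+M(1)$. Since $K$ is contained in some ball $[-R,R]$, this gives
\begin{align*}
  |\nu H_q(x)|\le \int M(x-y)\,|\nu|(dy)\le C\,|\nu|(\R)\,(1+(|x|+R)^2).
\end{align*}
This bound is uniform in $q$, and the right-hand side times $|\psi|$ is integrable.

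Dominated convergence then yields the claim. As a by-product, $\nu H_q$ and $\nu H$ are indeed tempered distributions, so the Fourier transforms are well defined.

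The main obstacle is conceptual rather than technical: deciding in which sense the Fourier transform and the convergence are meant. If a pointwise statement for $\xi\neq0$ is needed later, I would also identify $\F(\nu H_q)$ explicitly. Using $h_q(x)=r_q(0)-r_q(-x)$ and the fact that the Fourier transform of $r_q$ is $1/(q+\Psi)$ up to the sign and normalization conventions implicit in \eqref{resol-den}, one gets, away from $\xi=0$,
\begin{align*}
  \F(\nu H_q)(\xi)=-\frac{\hat\nu(\xi)}{q+\Psi(\pm\xi)},
\end{align*}
with the sign in $\pm\xi$ fixed by those conventions. The term $r_q(0)\nu(\R)$ contributes only a multiple of $\delta_0$. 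This expression converges to $-\hat\nu(\xi)/\Psi(\pm\xi)$ locally uniformly on $\{\xi\neq0\}$. That limit must coincide with $\F(\nu H)$ there by the distributional convergence above. The delicate point in this refinement is the singular part at $\xi=0$, which the distributional argument handles automatically.
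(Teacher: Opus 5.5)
Your proof is correct and follows essentially the same route as the paper: interpret the statement in $\mathscr{S}'$, use \eqref{hq-bdd} to get a $q$-independent quadratic majorant, use locally uniform convergence $h_q\to h$ for pointwise convergence, and conclude by dominated convergence. The differences are cosmetic. You dominate $\nu H_q\,\psi$ directly, whereas the paper shifts the test function and dominates $|h_q-h|\,|\phi(\cdot+y)|$. You also derive the quadratic growth $M(x)\le x^2M(1)$ for $|x|\ge 1$ explicitly, whereas the paper cites Tsukada's Lemma 15.5 for it.
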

\begin{proof}
  Since $h_q$ is bounded and 
\begin{align*}
  \lim_{x\to \pm \infty}\frac{h(x)}{|x|}=\frac{1}{\E_0[X_1^2]}\in [0,\infty)
\end{align*}
see Theorem 1.2 of Takeda-Yano \cite{TY},
$\nu H_q$ and $\nu H$ are tempered distributions. Let $\mathscr{S}'$ denote the space of tempered distributions. Since the Fourier transform is continuous on $\mathscr{S}'$, it suffices to show that $\nu H_q\to \nu H$ in $\mathscr{S}'.$ By (\ref{hq-bdd}) and Lemma 15.5 of Tsukada \cite{Tukada}, there exists a constant $C_1>0$ such that
\begin{align*}
h_q(x)\le C_1(1+|x|)^2.
\end{align*}
For any rapidly decreasing function $\phi$ and any $N\in \N$, there exists a constant $C_2>0$ such that
\begin{align*}
  |\phi(x+y)|\le \frac{C_2}{(1+|x|)^N}\qquad \text{for}\ y\in \mathrm{supp}(\nu).
\end{align*}
Taking $N=4$, there exists a constant $C_3>0$ such that
\begin{align*}
 |h_q(x)-h(x)||\phi(x+y)|\le \frac{C_3}{(1+|x|)^2}\in L^1(dx).
\end{align*}
Thus, by the dominated convergence theorem, we have
\begin{align*}
\Big|\langle \nu H_q-\nu H,\phi\rangle\Big|&= \left|\int_\R \phi(x) dx \int_\R \Big(h_q(x-y)-h(x-y)\Big)\nu(dy)\right|\\
&\le \int_\R |\nu|(dy)\int_\R |h_q(x)-h(x)||\phi(x+y)|dx\\
&\to 0,
\end{align*}
as $q\to 0$. This completes the proof.
\end{proof}

\begin{proof}[Proof of the uniqueness part of Theorem \ref{main1}]
Assume that there exist measures $\mu_B, \mu_B'$ and constants $k(B),k'(B)$ such that
\begin{align*}
  \varphi_B(x)=\int h(x-y)\mu_B(dy)-k(B)=\int h(x-y)\mu_B'(dy)-k'(B).
\end{align*}
Set $\mu:=\mu_B-\mu_B'$ and $k:=k(B)-k'(B)$. Then we have
\begin{align}
\label{muH}
  \mu H(x)=\int h(x-y)\mu (dy)=k.
\end{align}
Since $h_q$ converges to $h$ uniformly on compact sets and $\mu$ has compact support, we have the pointwise convergence
\begin{align*}
  \mu H_q(x)=\int h_q(x-y)\mu(dy)\to \int h(x-y)\mu(dy),\qquad x\in\R,
\end{align*}
as $q\to 0$. We take the Fourier transform. By Proposition I.9 of \cite{Ber}, we have
\begin{align*}
\mathscr{F}(\mu H_q)(\xi)&=\int_\R e^{i\xi x} dx \int_\R h_q(x-y)\mu(dy)\\
&=\int_\R e^{i\xi x} dx \int_\R r_q(0)\mu(dy)-\int_\R e^{i\xi x} dx \int_\R r_q(y-x)\mu(dy)\\
&=r_q(0)\mu(\R)\int_\R e^{i\xi x}dx-\int_\R e^{-i\xi y}\mu(dy)\int_{\R}e^{i\xi x}\widehat{r_q}(x)dx\\
&=2\pi r_q(0) \mu(\R)\delta (\xi)-\frac{1}{q+\widehat{\Psi}(\xi)}\cdot \F(\mu)(-\xi),
\end{align*}
where $\delta$ denotes the Dirac delta function, and $\widehat{\cdot}$ denotes quantities associated with the dual process of $X$, viz $\widehat{X}=-X$. Since $\widehat{\Psi}(0)=0$, we have
\begin{align}
  \label{DDF}
  \hat{\Psi}(\xi)\delta(\xi)=0.
\end{align}
Thus, we have
\begin{align*}
\label{Fourier}
(q+\widehat{\Psi}(\xi))\F(\mu H_q)(\xi)&=(q+\widehat{\Psi}(\xi))2\pi r_q(0)\mu(\R)\delta(\xi)-\F(\mu)(-\xi)\\
&=2\pi qr_q(0)\mu(\R)\delta(\xi)-\F(\mu)(-\xi).
  \stepcounter{equation}\tag{\theequation}
\end{align*}
Observe that by Lemma \ref{lem-uniq} and (\ref{muH}), we have
\begin{align*}
\lim_{q\to 0}\F(\mu H_q)(\xi)&=\F(\mu H)(\xi)=\F(k)(\xi)=2\pi k\delta(\xi).
\end{align*}
Thus, since $qr_q(0)\to 0,$ see, e.g., Lemma 15.5 of \cite{Tukada}, we have that the left-hand side of 
\eqref{Fourier} converges to zero by (\ref{DDF}). On the other hand, the right-hand side of
\eqref{Fourier} converges towards $-\F(\mu)(-\xi)$. Therefore, we obtain $\mu=0$, that is, $\mu_B=\mu_B',$ and hence $k(B)=k'(B)$. The proof of the uniqueness in Theorem \ref{main1} is now complete.
\end{proof}

This completes the proof of  Proposition \ref{main-prop}, because the latter shows that the convergence in the proof of the existence part in Proposition~\ref{main-prop} and the representation \eqref{main1-eq} does not depend on the selected subsequence. Standard arguments hence imply the convergence for any sequence. 
%\begin{proof}[Proof of Proposition \ref{main-prop}]
%By standard arguments, we have shown that the convergence in (\ref{subseqlim}) holds for the whole sequence, that is, is established.
%\end{proof}

%%%%%%%%%%%%%%%%%%%%%%%%%%%%%%%%%%%%%%%%%%%%%%%%%%%%%%%%%%%%%%%%%%%%%%%%%%%%%%%%%%%%%%%%%%%%%%%%%%%%%%%%%%%%%%%%%%%%%%%%%%%%%%%%%%%%%%%%%%%%%%%%%%%%%%%%%%%%%%%%%%%%%%%%%%%%%%%%%%%%%%%%%%%%%%%%%%%%%%%%%%%%%%%%%%%%%%%%%%%%%%%%%%%%%%%%%%%%%%%%%%%%%%%%%%%%%%%%%%%%%%%%%%%%%%%%%%%%%%%%%%%%%%%%%%%%%%%%

\section{Tanaka's Formula for Compact Sets}
\label{S4}
Before proving Theorem \ref{main2}, we establish to auxiliary lemmas. 

Let $(A_t, t\ge 0)$ be an additive functional, and let $\nu$ be its Revuz measure. For $q>0$, we define the $q$-potential of $A$ by
\begin{align*}
\label{Revuz}
u_q^A(x)&:=\E_x\left[\int_0^\infty e^{-qs}dA_s\right]=\int_\R r_q(y-x)\nu(dy);
\stepcounter{equation}\tag{\theequation}
\end{align*}
where the second equality is a consequence of Chapter V, Section 1, Theorem 5 in \cite{Ber}.
Since the proof of the following lemma is almost identical to that of Proposition 15.4 of Tsukada \cite{Tukada}, we give only a brief sketch of the proof.

\begin{lem}
\label{main2-lem1}
There exists a martingale $M^{q,\nu}$ such that
\begin{align}
\label{q-tanaka}
  \int_\R h_q(X_t-y)\nu(dy)=\int_\R h_q(x-y)\nu(dy)+M_t^{q,\nu}+q\int_0^t u_q^A(X_s)ds+A_t.
\end{align}
\end{lem}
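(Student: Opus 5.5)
The plan is the standard potential-theoretic construction: write $\int h_q(\cdot-y)\nu(dy)$ as a constant minus the $q$-potential of $A$, and read off the martingale from the Markov property. Since $h_q(z)=r_q(0)-r_q(-z)$, we have for every $x\in\R$
\begin{align*}
  \int_\R h_q(x-y)\nu(dy)=r_q(0)\nu(\R)-\int_\R r_q(y-x)\nu(dy)=r_q(0)\nu(\R)-u_q^A(x),
\end{align*}
by \eqref{Revuz}. The constant $r_q(0)\nu(\R)$ cancels on both sides of \eqref{q-tanaka}. It therefore suffices to show that
\begin{align*}
  M_t^{q,\nu}:=u_q^A(x)-u_q^A(X_t)-q\int_0^t u_q^A(X_s)ds-A_t
\end{align*}
is a martingale under $\P_x$, where $u_q^A$ is bounded and continuous because $r_q$ is bounded and continuous and $\nu$ is finite.

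To see this, introduce the $\P_x$-martingale
\begin{align*}
  N_t:=\E_x\Big[\int_0^\infty e^{-qs}dA_s\,\Big|\,\F_t\Big].
\end{align*}
By additivity of $A$ and the Markov property,
\begin{align*}
  N_t=\int_0^t e^{-qs}dA_s+e^{-qt}u_q^A(X_t).
\end{align*}
The integrability $\E_x[\int_0^\infty e^{-qs}dA_s]=u_q^A(x)<\infty$ follows from boundedness of $r_q$ and finiteness of $\nu$. Hence $u_q^A(X_t)=e^{qt}\big(N_t-\int_0^t e^{-qs}dA_s\big)$.

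Apply integration by parts to $e^{qt}$, which is continuous and of finite variation, and the semimartingale $N_t-\int_0^te^{-qs}dA_s$. This gives
\begin{align*}
  u_q^A(X_t)=u_q^A(x)+\int_0^t e^{qs}dN_s-A_t+q\int_0^t u_q^A(X_s)ds.
\end{align*}
Consequently $M_t^{q,\nu}=-\int_0^t e^{qs}dN_s$.

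It remains to upgrade this local martingale to a true martingale. For fixed $T$, $e^{qs}$ is bounded by $e^{qT}$ on $[0,T]$, so it is enough that $N$ be square-integrable (or just that the stochastic integral is a martingale on $[0,T]$). Since
\begin{align*}
  M_t^{q,\nu}=u_q^A(x)-u_q^A(X_t)-q\int_0^tu_q^A(X_s)ds-A_t,
\end{align*}
and the first three terms are bounded on $[0,T]$, this reduces to $\E_x[A_T^2]<\infty$. That bound follows from
\begin{align*}
  \E_x[A_T^2]\le 2e^{2qT}\,\E_x\Big[\Big(\int_0^\infty e^{-qs}dA_s\Big)^2\Big]
  =2e^{2qT}\cdot 2\E_x\Big[\int_0^\infty e^{-2qs}u_q^A(X_s)dA_s\Big]\le 4e^{2qT}\|u_q^A\|_\infty^2,
\end{align*}
where the middle equality uses the Markov property at time $s$ under $dA_s$ (the standard second-moment identity for additive functionals). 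With $M^{q,\nu}$ a square-integrable martingale, \eqref{q-tanaka} follows. This integrability step is the only genuinely delicate point, and it is handled as in Proposition 15.4 of Tsukada \cite{Tukada}.
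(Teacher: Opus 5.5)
Your route is the paper's. The Markov property gives $N_t=\int_0^te^{-qs}dA_s+e^{-qt}u_q^A(X_t)$, and integrating by parts against $e^{qt}$ (the paper does the same computation via Fubini) yields
\[
u_q^A(X_t)=u_q^A(x)+\int_0^t e^{qs}\,dN_s+q\int_0^t u_q^A(X_s)\,ds-A_t .
\]
This identity is correct and is exactly the paper's decomposition; its $M^{q,\nu}$ equals $\int_0^te^{qs}dN_s$.

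The error is in the next line. Substituting this identity into your own definition $M_t^{q,\nu}=u_q^A(x)-u_q^A(X_t)-q\int_0^tu_q^A(X_s)ds-A_t$ gives
\[
M_t^{q,\nu}=-\int_0^t e^{qs}\,dN_s-2q\int_0^t u_q^A(X_s)\,ds,
\]
not $-\int_0^te^{qs}dN_s$. The extra term does not vanish. For $\nu\neq0$ we have $u_q^A>0$, because by \eqref{HP-Lap} $r_q(a-z)=r_q(0)\E_z[e^{-qT_{\{a\}}}]>0$ (points are hit in the recurrent case under (\textbf{A})). So your $M^{q,\nu}$ differs from the local martingale $-\int_0^te^{qs}dN_s$ by a continuous, strictly decreasing process of finite variation. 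It is therefore not even a local martingale, and your argument does not prove the display. In fact no argument can: the display as printed is false for $\nu\neq0$.

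What your computation actually proves, using $\int_\R h_q(\cdot-y)\nu(dy)=r_q(0)\nu(\R)-u_q^A$, is
\[
\int_\R h_q(X_t-y)\nu(dy)=\int_\R h_q(x-y)\nu(dy)-\int_0^t e^{qs}\,dN_s-q\int_0^t u_q^A(X_s)\,ds+A_t ,
\]
with a minus sign in front of the $q$-term. Carrying out the paper's last step (``by \eqref{Revuz} and the definition of $h_q$'') gives the same formula. So the $+$ in the statement is a sign slip, and the paper's $M^{q,\nu}$ also enters with the opposite sign.

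The slip is harmless for Theorem~\ref{main2}, since $0\le q\int_0^tu_q^A(X_s)ds\le t\nu(\R)qr_q(0)\to0$ whichever sign it carries. With the sign corrected, your integrability step goes through verbatim. The process $-\int_0^te^{qs}dN_s=u_q^A(x)-u_q^A(X_t)+q\int_0^tu_q^A(X_s)ds-A_t$ is dominated on $[0,T]$ by $(2+qT)\|u_q^A\|_\infty+A_T\in L^2(\P_x)$, so it is a true square-integrable martingale. You should have flagged the sign mismatch instead of forcing the last line to agree with the target.
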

\begin{proof}[Sketch of the proof]
By the Markov property, we have
\begin{align*}
  \E_x\left[\int_0^\infty e^{-qs}dA_s\Big|\mathscr{F}_t\right]=\int_0^t e^{-qs}dA_s+e^{-qt}u_q^A(X_t).
\end{align*}
By Fubini's theorem, we have
\begin{align*}
  q\int_0^t e^{qs}ds\int_0^s e^{-qu}dA_u=e^{qt}\E_x\left[\int_0^\infty e^{-qs}dA_s\Big|\F_t\right]-u_q^A(X_t)-A_t.
\end{align*}
Thus, we have
\begin{align*}
u_q^A(X_t)=u_q^A(x)+M_t^{q,\nu}+q\int_0^t u_q^A(X_s)ds-A_t,
\end{align*}
where 
\begin{align*}
  M_t^{q,\nu}:=-q\int_0^t e^{qs}\E_x\left[\int_0^\infty e^{-qu}dA_u\Big|\F_s\right]ds+ e^{qt}\E_x\left[\int_0^\infty e^{-qu}dA_u\Big|\F_t\right]-u_q^A(x).
\end{align*}
We can prove that $(M_t^{q,\nu}, t\geq 0)$ is a martingale with zero mean. Finally, by (\ref{Revuz}) and the definition of $h_q$, we obtain (\ref{q-tanaka}).
\end{proof}

To pass to the limit as $q\to 0$ in (\ref{q-tanaka}), we consider the following lemma.

\begin{lem}
\label{main2-lem2}
 Assume that $\nu$ is a finite measure with compact support. Then the following convergence holds:
  \begin{align*}
  \int_\R h_q(X_t-y)\nu(dy)\to \int_\R h(X_t-y)\nu(dy)\qquad \text{a.s.\ and\ in\ }L^1(\P_x).
\end{align*}
\end{lem}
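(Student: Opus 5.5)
The almost sure convergence is immediate from the uniform convergence on compacts. For $L^1(\P_x)$ convergence I will use dominated convergence, with a dominating function built from the bound $M$ in \eqref{hq-bdd}.

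\textbf{Almost sure convergence.} Fix $\omega$ and $t$, and set $K:=X_t(\omega)-\mathrm{supp}(\nu)$. Since $\mathrm{supp}(\nu)$ is compact, $K$ is compact. We then have
\begin{align*}
\left|\int_\R h_q(X_t-y)\nu(dy)-\int_\R h(X_t-y)\nu(dy)\right|\le \nu(\R)\sup_{z\in K}|h_q(z)-h(z)|.
\end{align*}
The right-hand side tends to $0$ as $q\to0$, because $h_q\to h$ uniformly on compact sets (Theorem 1.1 of Takeda--Yano \cite{TY}) and $\nu$ is finite. This holds for every $\omega$, so the convergence is almost sure.

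\textbf{$L^1(\P_x)$ convergence.} Since $h_q\le M$ by \eqref{hq-bdd}, we have
\begin{align*}
\int_\R h_q(X_t-y)\nu(dy)\le \int_\R M(X_t-y)\nu(dy)
\end{align*}
for all $q>0$. By Fatou, the same bound holds for the limit $h$. By Lemma 15.5 of Tsukada \cite{Tukada}, $M$ is finite and continuous, and as in the proof of Lemma~\ref{lem-uniq} it satisfies $M(z)\le C_1(1+|z|)^2$. A sharper bound follows from splitting the integral defining $M$ at $\lambda=1/|z|$. On $\{\lambda>1/|z|\}$ the integrand is $2/|\Psi(\lambda)|$. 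On $\{\lambda\le1/|z|\}$ it is $\lambda^2z^2/|\Psi(\lambda)|\le 1/|\Psi(\lambda)|$, because $\lambda^2z^2\le1$ there. Both pieces are therefore bounded by $2\int_{\cdot}|\Psi|^{-1}\,d\lambda$ over their respective ranges, and so
\begin{align*}
M(z)\le \frac{2}{\pi}\int_0^\infty\frac{2\wedge(\lambda z)^2}{|\Psi(\lambda)|}\,d\lambda\le C(1+|z|),
\end{align*}
with the linear growth coming from the region near $\lambda=0$. It then suffices to show that $\E_x[|X_t|]<\infty$ or $\E_x[|X_t|^2]<\infty$, whichever growth bound is actually available.

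\textbf{Main obstacle.} This integrability is the hard step, since a recurrent Lévy process need not have finite second moment; stable processes with $\alpha<2$ do not. The polynomial bound on $M$ is therefore too crude in general. My first attempt will be to bypass $M$ and use subadditivity of $h_q$ and of $h$ (Theorem 1.1 of \cite{TY}), which gives
\begin{align*}
h_q(X_t-y)\le h_q(X_t-x)+h_q(x-y).
\end{align*}
The second term is bounded uniformly in $q$ for $y$ in the compact support of $\nu$, by continuity of $M$. For the first term, $\E_x[h_q(X_t-x)]=\E_0[h_q(X_t)]$ can be controlled uniformly in $q$ through Lemma~\ref{main2-lem1} applied with $\nu=\delta_0$, i.e.\ Tsukada's $q$-Tanaka formula. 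Taking expectations there gives
\begin{align*}
\E_0[h_q(X_t)]=q\,\E_0\!\left[\int_0^t r_q(-X_s)\,ds\right]+\E_0[L_t^0]\le qr_q(0)t+\E_0[L^0_t],
\end{align*}
which is bounded uniformly in $q\in(0,1)$ since $qr_q(0)\to0$.

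To upgrade this uniform $L^1$ bound to $L^1$ convergence, I will show uniform integrability in the following way. Each integrand $f_q:=h_q(X_t-x)$ is nonnegative and converges a.s.\ to $f:=h(X_t-x)$. Passing to the limit in the same identity gives $\E_0[h(X_t)]=\E_0[L^0_t]$, the expectation of the limiting Tanaka formula \eqref{Tanaka-Levy}. Hence $\E[f_q]\to\E[f]<\infty$. By Scheffé's lemma, $f_q\to f$ in $L^1$. The family $h_q(X_t-x)+h_q(x-y)$ dominating our integrand is therefore uniformly integrable, and the generalized dominated convergence theorem yields the claimed $L^1(\P_x)$ convergence.
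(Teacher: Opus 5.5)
\paragraph{Verdict: genuine gap.} Several parts of your argument are correct:
\begin{itemize}
\item The almost sure argument.
\item The subadditivity reduction. You need subadditivity of $h_q$, not only of $h$; it holds because the strong Markov property gives $\E_{a+b}[e^{-qT_{\{0\}}}]\ge \E_a[e^{-qT_{\{0\}}}]\,\E_b[e^{-qT_{\{0\}}}]$.
\item The uniform bound $\sup_{0<q<1}\E_0[h_q(X_t)]<\infty$ obtained from \eqref{q-tanaka} with $\nu=\delta_0$.
\end{itemize}

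The gap is the next step, $\E_0[h(X_t)]=\E_0[L^0_t]$. Letting $q\to0$ in that identity only yields $\lim_{q\to0}\E_0[h_q(X_t)]=\E_0[L^0_t]$, and Fatou gives $\E_0[h(X_t)]\le\E_0[L^0_t]$. The reverse inequality is exactly $\E_0[h_q(X_t)]\to\E_0[h(X_t)]$, i.e.\ the lemma itself for a point mass.

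Appealing to the mean-zero martingale in \eqref{Tanaka-Levy} does not close this, for two reasons:
\begin{itemize}
\item The literature versions of \eqref{Tanaka-Levy} are stated under further assumptions.
\item Under the standing hypotheses (recurrence and (\ref{ConditionA})), that formula with a genuine martingale is the case $\nu=\delta_x$ of Theorem~\ref{main2}, which is proved from this lemma. In that approach (the paper's, following Tsukada), the martingale property of $M^x$ is obtained precisely as the $L^1$ limit of the $q$-martingales, i.e.\ from this very convergence.
\end{itemize}
So subadditivity only reduces general $\nu$ to a point mass. The real difficulty, uniform integrability of $h_q(X_t)$ without any moments of $X_t$, remains unproved, and the Scheff\'e step rests on it.

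\paragraph{The missing idea.} Take the expectation inside the Fourier integral before bounding $1-\cos$. Replacing $1-\cos(\lambda z)$ by $(\lambda z)^2\wedge 2$ first is what forces moments. Stop \eqref{hq-bdd} before its last inequality, and use $|q+\Psi|\ge|\Psi|$ (since $\Re\Psi\ge0$). This gives a $q$-free majorant
\[
0\le h_q(z)\le D(z):=\frac{2}{\pi}\int_0^\infty\frac{1-\cos(\lambda z)}{|\Psi(\lambda)|}\,d\lambda,\qquad\text{hence also } h\le D .
\]
By Tonelli and $\E_x[\cos(\lambda(X_t-y))]=\Re\bigl(e^{i\lambda(x-y)}e^{-t\Psi(\lambda)}\bigr)$,
\[
\E_x\bigl[1-\cos(\lambda(X_t-y))\bigr]\le 1-\cos(\lambda(x-y))+\bigl|1-e^{-t\Psi(\lambda)}\bigr|\le 1-\cos(\lambda(x-y))+\bigl(2\wedge t|\Psi(\lambda)|\bigr).
\]
Hence $\E_x[D(X_t-y)]\le M(x-y)+\frac{2}{\pi}\bigl(t+2\int_1^\infty|\Psi(\lambda)|^{-1}d\lambda\bigr)$. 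This is bounded for $y\in\mathrm{supp}(\nu)$ by continuity of $M$. So $2D(X_t-y)$ dominates $|h_q(X_t-y)-h(X_t-y)|$ in $L^1(\P_x\otimes\nu)$, and ordinary dominated convergence finishes the proof. This is essentially the paper's argument, taken from Tsukada.

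\paragraph{A smaller error.} Your derivation of $M(z)\le C(1+|z|)$ is wrong as written. Bounding $\lambda^2z^2/|\Psi(\lambda)|$ by $1/|\Psi(\lambda)|$ on $[0,1/|z|]$ produces $\int_0^{1/|z|}|\Psi|^{-1}d\lambda=\infty$ for recurrent processes (e.g.\ $|\Psi(\lambda)|\asymp\lambda^\alpha$). The linear bound instead needs $|\Psi(\lambda)|\ge c\lambda^2$ near $0$.
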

\begin{proof}
  Since $h_q$ converges to $h$ uniformly on compact sets and $\nu$ is a finite measure, we have
  \begin{align*}
    \left|\int_\R h_q(x-y)\nu(dy)-\int_\R h(x-y)\nu(dy)\right|&\le \nu(\R)\sup_{z\in x-\mathrm{supp}(\nu)}|h_q(z)-h(z)|\to 0,
  \end{align*}
  as $q\to 0$. This proves the almost sure convergence.

Next, we prove the $L^1$-convergence. By (\ref{hq-repre}) and the proof of Theorem 15.2 of Tsukada \cite{Tukada}, there exist positive constants $C_2,C_1,C_0$ such that
  \begin{align*}
    &\E_x\Big[|h_q(X_t-y)-h(X_t-y)|\Big]\\
    &\qquad \le \E_x\left[h_q(X_t-y)+\lim_{p\to 0}h_p(X_t-y)\right]\\
    &\qquad \le \frac{4}{\pi}\E_x\left[\int_0^\infty \frac{1-\cos (\lambda(X_t-y))}{|\Psi(\lambda)|}d\lambda\right]\\
    &\qquad \le \frac{8}{\pi}\E_x\left[\int_0^1 \frac{(t\omega(\lambda))^2+(\lambda(x+y))^2}{|\Psi(\lambda)|}d\lambda+\int_1^\infty \left|\frac{2}{\Psi(\lambda)}\right|d\lambda+t\right]\\
    &\qquad = C_2y^2 + C_1 |y|+C_0,
  \end{align*}
  where the right-hand side belongs to $L^1(\nu)$. Thus, by the dominated convergence theorem, we have
  \begin{align*}
 & \E_x\left[\left|\int_\R h_q(X_t-y)\nu(dy)-\int_\R h(X_t-y)\nu(dy)\right|\right]\\
 &\qquad \le \int_\R \E_x\Big[|h_q(X_t-y)-h(X_t-y)|\Big]\nu(dy)\\
 &\qquad \to 0
  \end{align*}
  as $q\to 0$. This completes the proof.
\end{proof}

We now prove Theorem \ref{main2}.

\begin{proof}[Proof of Theorem \ref{main2}]
Since $qr_q(0)\to 0$ and $r_q(z)\le r_q(0)$ for all $z\in \R$, we have
\begin{align*}
  q\int_0^t u_q^A(X_s)ds&=\int_0^t ds\int_\R qr_q(y-X_s)\nu(dy)\le t\nu(\R)\cdot qr_q(0)\to 0.
\end{align*}
Thus, by Lemmas \ref{main2-lem1} and \ref{main2-lem2}, $M_t^{q,\nu}$ also converges almost surely and in $L^1(\P_x)$ as $q\to 0$. We denote the limiting process by $(M_t^\nu, t\ge 0)$. Then $(M_t^\nu, t\ge 0)$ is a martingale (see, e.g., Proposition 1.3 of \cite{CW}). Therefore, letting $q\to 0$ in (\ref{q-tanaka}), we obtain (\ref{main2-eq1}).

Next, we prove (\ref{main2-eq2}). Let $B$ be a compact set and let $\mu_B$ be as in Theorem \ref{main1}. By Theorem V.5 of \cite{Ber}, 
\begin{align*}
 t\mapsto \int_BL_t^y\mu_B(dy)
\end{align*}
is a positive continuous additive functional whose Revuz measure is $\mu_B$. Therefore, applying (\ref{main2-eq1}) with $\nu=\mu_B$ and subtracting $k(B)$ from both sides, we obtain (\ref{main2-eq2}). This completes the proof.
\end{proof}

\section{Examples}
\label{S-Eg}
In this section, we deal with some explicit examples.

\subsection{The Case of a Finite Set}
\label{SS4.1}
In this subsection, we derive explicit expressions for $\mu_B$ and $k(B)$ when $B$ is a finite set.

First, we consider the one-point case.

\begin{prop}
If $B=\{a\}$, then 
\begin{align}
\label{one-pt}
  \mu_{\{a\}}(dy)=\delta_a(dy),\qquad k(\{a\})=0.
\end{align}
\end{prop}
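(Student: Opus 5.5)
Two routes are available: compute the $q$-capacity measure of $\{a\}$ directly, or use the uniqueness in Theorem~\ref{main1}. The direct computation settles both claims at once, so we take that route.

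Since $m^q_{\{a\}}$ is supported on $\{a\}$, we have $m^q_{\{a\}}=c_q\delta_a$ for some $c_q\ge 0$. Then \eqref{q-cap} reads $\E_x[e^{-qT_{\{a\}}}]=c_q\,r_q(a-x)$. Comparing this with \eqref{HP-Lap}, which gives $\E_x[e^{-qT_{\{a\}}}]=r_q(a-x)/r_q(0)$, and evaluating at $x=a$ (where the left side equals $1$ and $r_q(0)>0$), we get $c_q=1/r_q(0)$. Hence
\begin{align*}
  \mu^q_{\{a\}}=r_q(0)m^q_{\{a\}}=\delta_a \qquad\text{for every } q>0,
\end{align*}
so trivially $\mu^q_{\{a\}}\Rightarrow\delta_a$. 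Moreover $\mu^q_{\{a\}}(\{a\})=1$, so $k_q(\{a\})=r_q(0)\big(\mu^q_{\{a\}}(\{a\})-1\big)=0$ for all $q$. By Proposition~\ref{main-prop}, the limits are $\mu_{\{a\}}=\delta_a$ and $k(\{a\})=0$, which is \eqref{one-pt}.

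As a consistency check, Theorem~\ref{main1} then gives $\varphi_{\{a\}}(x)=h(x-a)$. This agrees with the definition \eqref{intvarphi}: since $X_{T_{\{a\}}}=a$ on $\{T_{\{a\}}<\infty\}$, which holds a.s. by recurrence and point regularity, we have
\begin{align*}
  \varphi_{\{a\}}(x)=h(x)-h(a).
\end{align*}
The two expressions must coincide by the uniqueness part of Theorem~\ref{main1}, so nothing further needs proving.

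The only delicate point is justifying evaluation at $x=a$, that is, that $\E_a[e^{-qT_{\{a\}}}]=1$, where $T_A=\inf\{t\ge0;\ X_t\in A\}$ includes time $0$. This holds because $T_{\{a\}}=0$ under $\P_a$ by definition of $T_A$. Alternatively, one can compare the two expressions for all $x$, since $r_q(a-x)$ is not identically zero. Either way the step is immediate, so we expect no real obstacle.
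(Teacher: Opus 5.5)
Your argument is the paper's own: \eqref{q-cap}, \eqref{HP-Lap} and the fact that $m^q_{\{a\}}$ is carried by $\{a\}$ give $r_q(0)m^q_{\{a\}}=\delta_a$ for every $q>0$, so $\mu_{\{a\}}=\delta_a$ and $k_q(\{a\})\equiv 0$.

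Your ``consistency check'' is not needed and is in fact wrong. The functions $h(x)-h(a)$ and $h(x-a)$ differ in general: for Brownian motion at $x=0$ they are $-|a|$ and $|a|$. This is because \eqref{intvarphi} as printed is only valid when $0\in B$ (otherwise one translates by a point of $B$), and uniqueness in Theorem~\ref{main1} cannot make two different functions coincide. The correct check is simply $r_q(0)\P_x(\bm{e}_q<T_{\{a\}})=r_q(0)-r_q(a-x)=h_q(x-a)\to h(x-a)$.
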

\begin{proof}
  By (\ref{HP-Lap}) and (\ref{q-cap}), we have
  \begin{align*}
    \int_\R r_q(y-x)m_{\{a\}}^q(dy)=\P_x[e^{-qT_{\{a\}}}]=\frac{r_q(a-x)}{r_q(0)}.
  \end{align*}
  Thus, we have
  \begin{align*}
    \mu_{\{a\}}^q(dy)=r_q(0)m_{\{a\}}^q(dy)=\delta_a(dy).
  \end{align*}
This proves (\ref{one-pt}).
\end{proof}

Next, we consider the two-point case.

\begin{prop}
\label{two-prop}
  If $B=\{a,b\}$ with $a\neq b$, then 
  \begin{align}
  \label{two-pt}
     \mu_{\{a,b\}}(dy)&=\frac{h(a-b)}{h(a-b)+h(b-a)}\delta_a(dy)+\frac{h(b-a)}{h(a-b)+h(b-a)}\delta_b(dy),\\
     k(\{a,b\})&=\frac{h(a-b)h(b-a)}{h(a-b)+h(b-a)}.
  \end{align}
\end{prop}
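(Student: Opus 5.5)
The plan is to use the representation of Theorem \ref{main1} together with the uniqueness established in Section \ref{S3}, rather than computing $m^q_{\{a,b\}}$ directly. Since $\mu_B$ is a probability measure supported on $B=\{a,b\}$, we can write $\mu_B=p\,\delta_a+(1-p)\,\delta_b$ for some $p\in[0,1]$. Then \eqref{main1-eq} reads
\begin{align*}
  \varphi_B(x)=p\,h(x-a)+(1-p)\,h(x-b)-k(B),\qquad x\in\R.
\end{align*}
Next we use that $\varphi_B$ vanishes on $B$. This holds since $\varphi_B^q$ vanishes on $B$ (points are regular for themselves under (\ref{ConditionA}), so $T_B=0$ $\P_a$-a.s.), and $\varphi_B$ is its limit by \eqref{phiq}. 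Recall also that $h(0)=0$, because $h_q(0)=0$. Evaluating at $x=a$ and at $x=b$ gives the linear system
\begin{align*}
  (1-p)\,h(a-b)=k(B),\qquad p\,h(b-a)=k(B).
\end{align*}

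Solving this system gives $p\,(h(a-b)+h(b-a))=h(a-b)$, so that
\begin{align*}
  p=\frac{h(a-b)}{h(a-b)+h(b-a)},\qquad k(B)=\frac{h(a-b)h(b-a)}{h(a-b)+h(b-a)},
\end{align*}
which is \eqref{two-pt}. Because the pair $(\mu_B,k(B))$ is the unique one satisfying \eqref{main1-eq}, these values are exactly the equilibrium measure and the Robin constant. Strictly speaking uniqueness is not even needed: the system itself determines $p$ and $k(B)$ once the denominator is nonzero.

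The main obstacle is to justify that $h(a-b)+h(b-a)>0$ for $a\neq b$. From (\ref{hq-repre}) and the bound \eqref{hq-bdd},
\begin{align*}
  h(z)+h(-z)=\lim_{q\to0}\frac{2}{\pi}\int_0^\infty(1-\cos(z\lambda))\,\Re\Big(\frac{1}{q+\Psi(\lambda)}\Big)\,d\lambda .
\end{align*}
Since $\Re\Psi\ge 0$, the integrand is nonnegative. I would use Fatou's lemma to bound the limit below by the integral of $(1-\cos z\lambda)\Re(1/\Psi(\lambda))$, which is strictly positive unless $\Re(1/\Psi)=0$ a.e. That would contradict (\ref{ConditionA}) together with recurrence, since then $r_q(0)$ would stay bounded. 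Alternatively, $h(z)+h(-z)=0$ would force $\varphi_{\{0,z\}}$ to be degenerate, contradicting \eqref{phiq}; if the Fatou route proves delicate, I would take this probabilistic argument instead.
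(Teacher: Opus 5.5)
Your proof is correct, but it takes a different route from the paper's.

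\textbf{The paper's route.} The paper works at the level of $q>0$. It uses the two-point hitting identity of Takeda--Yano (their Lemma 3.5) to write $\E_x[e^{-qT_{\{a,b\}}}]$ explicitly. From this it reads off the $q$-capacity
\begin{align*}
m^q_{\{a,b\}}=\frac{h_q(a-b)\delta_a+h_q(b-a)\delta_b}{r_q(0)^2-r_q(a-b)r_q(b-a)},
\end{align*}
and then lets $q\to 0$ after multiplying by $r_q(0)$.

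\textbf{Your route.} You bypass the capacity entirely and use only Theorem \ref{main1}, together with $\varphi_B|_B=0$ and $h(0)=0$. This is exactly the mechanism the paper itself uses later for $n$ points: setting $x=a_i$ in \eqref{main1-eq} gives $\bm{H}\bm{\alpha}=k\bm{1}$ and $\bm{1}^T\bm{\alpha}=1$. For $n=2$ this system can be solved by hand, without the invertibility lemmas.

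\textbf{What each approach buys.} Your route is shorter. It also explicitly checks the nondegeneracy $h(a-b)+h(b-a)>0$, which the paper's proof uses silently when it divides. The cost is that you depend on the full strength of Theorem \ref{main1} and Proposition \ref{main-prop}, and hence on \eqref{phiq} and the Fourier uniqueness argument. The paper's computation instead yields the explicit pre-limit measure $r_q(0)m^q_{\{a,b\}}$ and exhibits the convergence directly.

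\textbf{Two small remarks.} First, no regularity of points is needed to get $\varphi_B^q=0$ on $B$, since $T_B$ is defined with $t\ge 0$. Second, in your positivity argument the Fatou bound is actually an equality by dominated convergence: dominate the integrand by $(1-\cos z\lambda)/|\Psi(\lambda)|$, as in Lemma \ref{finite-lem2}. The cleanest contradiction is then direct. If $\Re\Psi\equiv 0$, the L\'evy--Khintchine formula forces $\sigma=0$ and $\Pi=0$, so $X$ is a pure drift, which violates (\ref{ConditionA}). This is cleaner than arguing through boundedness of $r_q(0)$.
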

\begin{proof}
   By (\ref{q-cap}) and Lemma 3.5 of Takeda-Yano \cite{TY}, we have
  \begin{align*}
  \label{2pt-equi}
    &\int_\R r_q(y-x)m_{\{a,b\}}^q(dy)\\
    &\qquad =\P_x\Big[e^{-qT_{\{a,b\}}}\Big]\\
    &\qquad =\P_x\Big[e^{-qT_{\{a\}}},\ T_{\{a\}}<T_{\{b\}}\Big]+\P_x\Big[e^{-qT_{\{b\}}},\ T_{\{b\}}<T_{\{a\}}\Big]\\
    &\qquad =\frac{h_q(a-b)+h_q(b-a)-\frac{1}{r_q(0)}\{h_q(x-a)h_q(a-b)+h_q(x-b)h_q(b-a)\}}{h_q(a-b)+h_q(b-a)-\frac{1}{r_q(0)}\{h_q(a-b)h_q(b-a)\}}.
    \stepcounter{equation}\tag{\theequation}
  \end{align*}
A straightforward calculation yields
    \begin{align*}
      m_{\{a,b\}}^q(dy)&=\frac{h_q(a-b)\delta_a(dy)+h_q(b-a)\delta_b(dy)}{r_q(0)^2-r_q(a-b)r_q(b-a)}.
    \end{align*}
Since
\begin{align*}
    \frac{r_q(0)^2-r_q(a-b)r_q(b-a)}{r_q(0)}&=h_q(a-b)+h_q(b-a)-\frac{1}{r_q(0)}\{h_q(a-b)h_q(b-a)\}\\
    &\to h(a-b)+h(b-a),
\end{align*}
we obtain (\ref{two-pt}), multiplying (\ref{2pt-equi}) by $r_q(0)$.
\end{proof}

Finally, we consider an $n$-point set $\{a_1,...,a_n\}$ with $n\ge 2$. 

\begin{prop}
\label{finit-proposition}
Let $n\ge 2$ and $a_1<\cdots <a_n$. Then the following statements hold:
\begin{enumerate}
  \item If $h(x)=0$ for $x<0$, then
  \begin{align*}
    \mu_{\{a_1,...,a_n\}}(dy)=\delta_{a_n}(dy),\qquad k(\{a_1,...,a_n\})=0.
  \end{align*}
   \item If $h(x)=0$ for $x>0$, then
  \begin{align*}
    \mu_{\{a_1,...,a_n\}}(dy)=\delta_{a_1}(dy),\qquad k(\{a_1,...,a_n\})=0.
  \end{align*}
  \item If $h(x)>0$ for $x\neq 0$, then
  \begin{gather*}
    \mu_{\{a_1,...,a_n\}}(dy)=\frac{1}{\bm{1}^T\bm{H}^{-1}\bm{1}}\sum_{i=1}^n (\bm{H}^{-1}\bm{1})_i\delta_{a_i}(dy),\\
    k(\{a_1,...,a_n\})=\frac{1}{\bm{1}^T\bm{H}^{-1}\bm{1}}.
  \end{gather*}
\end{enumerate}
\end{prop}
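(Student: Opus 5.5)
The plan is to pass to the limit in a finite linear system satisfied by the $q$-capacity measure, and then identify the limit using Proposition~\ref{main-prop}. Throughout, $\bm{H}$ denotes the matrix $(h(a_i-a_j))_{i,j=1}^n$, $\bm{1}$ the all-ones vector, and $B=\{a_1,\dots,a_n\}$.

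\emph{Step 1: the pre-limit system.} By (\ref{q-cap}), $m_B^q=\sum_j m_j^q\delta_{a_j}$. Condition (\ref{ConditionA}) makes points regular for themselves, so $\E_{a_i}[e^{-qT_B}]=1$ for every $i$. Substituting $r_q(a_j-a_i)=r_q(0)-h_q(a_i-a_j)$ and writing $\mu_j^q=r_q(0)m_j^q$, this gives
\begin{align*}
\sum_{j=1}^n h_q(a_i-a_j)\mu_j^q=r_q(0)\Big(\mu_B^q(B)-1\Big)=k_q(B),\qquad i=1,\dots,n.
\end{align*}
This is simply (\ref{q-eq}) evaluated at $x=a_i$. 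By Proposition~\ref{main-prop} and the locally uniform convergence $h_q\to h$, we can let $q\to0$. The limit $\mu=\mu_B$ (a probability vector) and $k=k(B)$ then satisfy
\begin{align*}
\bm{H}\mu=k\bm{1},\qquad \bm{1}^T\mu=1,\qquad \mu\ge0.
\end{align*}
Since $h(0)=0$, row $i$ reads $k=\sum_{j\neq i}h(a_i-a_j)\mu_j$.

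\emph{Step 2: cases 1 and 2.} In case 1, row $1$ involves only $h(a_1-a_j)$ with $a_1-a_j<0$, so $k=0$. Then row $i$ gives $\sum_{j<i}h(a_i-a_j)\mu_j=0$. If $h>0$ on $(0,\infty)$, induction on $i$ yields $\mu_1=\dots=\mu_{n-1}=0$, hence $\mu=\delta_{a_n}$. The case $h\equiv0$ cannot occur: it would make $\varphi_B\equiv0$, contradicting (\ref{phiq}) for $x\notin B$. If $h$ could also vanish at some positive points, I would instead check directly that $\delta_{a_n}$ together with $k=0$ solves the system, and rely on uniqueness of the representation in Theorem~\ref{main1}. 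For that I would verify that $h(x-a_n)=h(x)-\E_x[h(X_{T_B})]$; this is the less clean point of this case. Case 2 is symmetric: use row $n$ and induct downward.

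\emph{Step 3: case 3.} First, the augmented system has at most one solution. Suppose $(w,c)$ is the difference of two solutions, so that $\bm{H}w=c\bm{1}$ and $\bm{1}^Tw=0$. Then $w^T\bm{H}w=c\,\bm{1}^Tw=0$. On the other hand, for $\bm{1}^Tw=0$ the constant $r_q(0)$ cancels, and by (\ref{resol-den})
\begin{align*}
w^T\bm{H}_qw=-\sum_{i,j}w_iw_jr_q(a_j-a_i)=-\frac{1}{\pi}\int_0^\infty\Big|\sum_j w_je^{i\lambda a_j}\Big|^2\Re\Big(\frac{1}{q+\Psi(\lambda)}\Big)d\lambda,
\end{align*}
with the obvious adjustment for the conjugate half-line. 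Letting $q\to0$ should give $w^T\bm{H}w<0$ whenever $w\ne0$, because $\Re(1/\Psi)>0$ on a set of positive measure and a nonzero trigonometric polynomial vanishes only on a null set. Justifying this strict limit is the main obstacle: one needs Fatou's lemma together with a lower bound, and I would try the integrability of $(1-\cos)/|\Psi|$ from Lemma 15.5 of \cite{Tukada}. Hence $w=0$, and then $c=0$.

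Next, $\bm{H}$ is invertible. Suppose $\bm{H}v=0$ with $v\ne0$. If $\bm{1}^Tv=0$, the previous argument gives $v=0$, a contradiction. Otherwise normalize $\bm{1}^Tv=1$; then $(v,0)$ solves the augmented system, so by uniqueness $v=\mu_B$ and $k(B)=0$. But then $0=\sum_{j\ne i}h(a_i-a_j)\mu_j$ with $h>0$ off the diagonal forces $\mu_j=0$ for all $j\neq i$, for every $i$. With $n\ge2$ this means $\mu=0$, contradicting $\bm{1}^T\mu=1$. So $\mu=k\bm{H}^{-1}\bm{1}$, and the normalization $\bm{1}^T\mu=1$ gives $k=1/(\bm{1}^T\bm{H}^{-1}\bm{1})$, which is the claimed formula.
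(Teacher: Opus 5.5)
Your argument is correct. Its skeleton is the paper's: the limiting system $\bm{H}\mu=k(B)\bm{1}$, $\bm{1}^T\mu=1$, strict negativity of $w^T\bm{H}w$ on $\{\bm{1}^Tw=0\}$ (Lemma~\ref{finite-lem2}), and invertibility of $\bm{H}$ by showing that a null vector $v$ with $\bm{1}^Tv=1$ would force $v=\mu_B$ and $k(B)=0$. The paper gets the limiting system by putting $x=a_i$ in (\ref{main1-eq}); you take limits in (\ref{q-eq}) at $x=a_i$, which amounts to the same thing.

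You differ in how you refute $k(B)=0$ and how you finish. The paper shows $k(B)>0$ through monotonicity of the $q$-capacity (Lemma~\ref{finite-lem1}) and the explicit two-point Robin constant (Proposition~\ref{two-prop}, which rests on the Takeda--Yano two-point hitting formula). You get it directly from $\mu_B\ge0$, $\bm{1}^T\mu_B=1$ and $h>0$ off the diagonal: $k(B)=\sum_{j\ne i}h(a_i-a_j)\mu_j\ge h(a_i-a_{j_0})\mu_{j_0}>0$ whenever $\mu_{j_0}>0$ and $i\ne j_0$. This is more elementary and self-contained; the paper's route additionally yields the bound $k(B)\ge k(\{a_1,a_2\})$.

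Likewise, $1=\bm{1}^T\mu_B=k(B)\,\bm{1}^T\bm{H}^{-1}\bm{1}$ gives $\bm{1}^T\bm{H}^{-1}\bm{1}\ne0$ and the formula at once. This replaces Lemma~\ref{finite-lem4} and the Schur-complement step. The paper's version is a purely algebraic fact about $\bm{H}$ (invertibility of the bordered matrix), independent of the existence of $\mu_B$; given Theorem~\ref{main1}, yours suffices.

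The two points you flag close easily.

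\emph{Step 3 limit.} Fatou's lemma alone is enough. Since $\Re\frac{1}{q+\Psi}=\frac{q+\Re\Psi}{|q+\Psi|^2}\ge0$, the integrand is nonnegative. Also $w^T\bm{H}_qw\to w^T\bm{H}w$, because only finitely many values of $h_q$ enter. Hence $-w^T\bm{H}w\ge\frac1\pi\int_0^\infty|\sum_jw_je^{i\lambda a_j}|^2\,\Re\frac{1}{\Psi(\lambda)}\,d\lambda>0$, with no extra lower bound needed. Dominated convergence using $|q+\Psi|\ge|\Psi|$, as in the paper, also works.

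\emph{Step 2 hedge.} Drop both the fallback via uniqueness and the appeal to (\ref{phiq}); the latter would need positivity of $\varphi_B$ off $B$, which that statement does not give. Instead, note that your Step 3 inequality does not use the case-3 hypothesis. Applied to the two-point set $\{0,x\}$ with $w=(1,-1)$, it gives $h(x)+h(-x)>0$ for every $x\ne0$. In case 1 this means $h>0$ on $(0,\infty)$. Then row $1$ gives $k=0$, and row $n$ alone forces $\mu_j=0$ for all $j<n$; no induction is needed. Case 2 is symmetric. The paper only calls these cases clear, so your explicit treatment (with this fix) adds detail it omits.
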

\begin{Rem}
  By continuity and subadditivity of $h$, there are only the three possibilities in Proposition \ref{finit-proposition}, see, e.g. page 3189 of \cite{Takeda}. For example, spectrally negative \Levy\ processes with infinite second moment belong to case 1, whereas spectrally positive \Levy\ processes with infinite second moment belong to case 2. These follow from the explicit formula (\ref{h-SNLP}) for the renormalized zero resolvent $h$.
\end{Rem}

The proof of the latter Proposition will be derived as a consequence of the forthcoming lemmas.

\begin{lem}
\label{finite-lem1}
If $\emptyset\neq B_1\subset B_2$, then $k(B_1)\le k(B_2)$.
\end{lem}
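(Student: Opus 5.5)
We will compare the functions $\varphi_{B_1}$ and $\varphi_{B_2}$ and then use the representation of Theorem \ref{main1}. Since $B_1\subset B_2$, we have $T_{B_2}\le T_{B_1}$. Hence $\P_x(\bm{e}_q<T_{B_2})\le \P_x(\bm{e}_q<T_{B_1})$ for every $x\in\R$ and $q>0$. Multiplying by $r_q(0)$ and letting $q\to 0$ via \eqref{phiq}, we obtain
\begin{align*}
  \varphi_{B_2}(x)\le \varphi_{B_1}(x),\qquad x\in\R.
\end{align*}

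The key step is to integrate this inequality against the equilibrium measure $\mu_{B_2}$, which is a probability measure carried by $B_2$. By Theorem \ref{main1} we have $\varphi_{B_2}=0$ on $B_2$ (it vanishes on $B_2$ as the limit of $\varphi_{B_2}^q$, which is zero there). Hence
\begin{align*}
  0=\int \varphi_{B_2}(x)\mu_{B_2}(dx)\le \int\varphi_{B_1}(x)\mu_{B_2}(dx)
  =\iint h(x-y)\mu_{B_1}(dy)\mu_{B_2}(dx)-k(B_1).
\end{align*}
Similarly, integrating $\varphi_{B_1}=0$ against $\mu_{B_1}$ over $B_1\subset B_2$ gives
\begin{align*}
  0=\int\varphi_{B_2}(y)\mu_{B_1}(dy)=\iint h(y-x)\mu_{B_2}(dx)\mu_{B_1}(dy)-k(B_2).
\end{align*}
Here we use that $\varphi_{B_2}=0$ on $B_1$. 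This yields $k(B_2)=\iint h(y-x)\mu_{B_2}(dx)\mu_{B_1}(dy)$.

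Comparing the two displays, the first involves $h(x-y)$ and the second $h(y-x)$. When $h$ is not symmetric these do not match directly, and this is the main obstacle. I would avoid the issue by working at level $q$ with the dual process. The dual $q$-capacity measure $\widehat m^q_{B_1}$ satisfies $\widehat\E_x[e^{-qT_{B_1}}]=\int r_q(x-y)\widehat m^q_{B_1}(dy)$. We then integrate $\E_x[e^{-qT_{B_2}}]\ge \E_x[e^{-qT_{B_1}}]$ against $\widehat m^q_{B_1}$ and use the duality identity
\begin{align*}
  \int \E_x[e^{-qT_{B_1}}]\,\widehat m^q_{B_1}(dx)=m^q_{B_1}(B_1),
\end{align*}
which follows from Fubini and the fact that $\widehat\E_y[e^{-qT_{B_1}}]=1$ on $B_1$ for regular points. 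This gives $m^q_{B_2}$ compared with $m^q_{B_1}$ in total mass: $\widehat m^q_{B_1}(\R)\,\cdot$ of the relevant pairing yields $m^q_{B_1}(B_1)\le m^q_{B_2}(B_2)$, i.e. monotonicity of $q$-capacity.

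Alternatively, and more simply, capacity monotonicity $m^q_{B_1}(B_1)\le m^q_{B_2}(B_2)$ is classical (Sato, Section 42). Granting it, $r_q(0)(r_q(0)m^q_{B_1}(B_1)-1)\le r_q(0)(r_q(0)m^q_{B_2}(B_2)-1)$, and Proposition \ref{main-prop} gives $k(B_1)\le k(B_2)$ directly in the limit. I would present this route, citing the capacity monotonicity, and only fall back on the duality computation if a self-contained argument is needed. Points of $B$ irregular for $B$ should be handled by the fact that they form a polar set charged by neither capacity measure.
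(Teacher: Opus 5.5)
Your final route is the paper's proof: the paper cites monotonicity of the $q$-capacity $m_B^q(B)$ in $B$ (Proposition 42.12 of \cite{Sato}) and passes to the limit, which is what you do after multiplying by $r_q(0)>0$ and invoking Proposition~\ref{main-prop}. If you want the monotonicity to be self-contained, integrate $\E_x[e^{-qT_{B_2}}]\ge \E_x[e^{-qT_{B_1}}]$ over $x\in\R$ and use $\int_\R r_q(z)\,dz=1/q$; this gives $m^q_{B_1}(B_1)\le m^q_{B_2}(B_2)$ directly, so you need neither the duality detour nor the exploratory comparison of $\varphi_{B_1}$ and $\varphi_{B_2}$.
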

\begin{proof}
 This follows from the fact that the $q$-capacity $m_B^q(B)$ is monotone in $B$, see, e.g., Proposition 42.12 of \cite{Sato}. Hence, $k(B)$ has the same monotonicity property.
\end{proof}

We define a $n\times n$ matrix
\begin{align*}
  \bm{H}:=\Big(h(a_i-a_j)\Big)_{i,j=1}^n=\begin{pmatrix}
    0&h(a_1-a_2)&\cdots &h(a_1-a_n)\\
    h(a_2-a_1)&0&\cdots &h(a_2-a_n)\\
    \vdots &\vdots &&\vdots \\
    h(a_n-a_1)&h(a_n-a_2)&\cdots &0
  \end{pmatrix}
\end{align*}
and define the $n$-dimensional column vectors
\begin{align*}
  \bm{1}:=\Big(1,...,1\Big)^T,\qquad \bm{0}:=\Big(0,...,0\Big)^T.
\end{align*}
We establish several properties of this matrix.

\begin{lem}
\label{finite-lem2}
We have
  \begin{align*}
      \bm{\beta}^T\bm{H}\bm{\beta}<0,\qquad \text{whenever}\ \bm{1}^T\bm{\beta}=\sum_{j=1}^n \beta_j=0\ \text{and}\ \bm{\beta}\neq \bm{0}.
    \end{align*}
\end{lem}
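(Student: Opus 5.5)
We plan to prove this through the Fourier representation \eqref{hq-repre}, first at level $q>0$ and then letting $q\to 0$. Fix $\bm{\beta}\neq\bm{0}$ with $\sum_j\beta_j=0$, and write $\bm{H}_q:=(h_q(a_i-a_j))_{i,j}$. Since $h_q(x)=r_q(0)-r_q(-x)$ and $\sum_j\beta_j=0$, the constant term $r_q(0)$ drops out of the quadratic form:
\begin{align*}
\bm{\beta}^T\bm{H}_q\bm{\beta}=-\sum_{i,j}\beta_i\beta_j r_q(a_j-a_i).
\end{align*}
Inserting \eqref{resol-den} and using that the $\beta_i$ are real, we expect
\begin{align*}
\bm{\beta}^T\bm{H}_q\bm{\beta}=-\frac{1}{\pi}\int_0^\infty \Re\left(\frac{1}{q+\Psi(\lambda)}\right)\Big|\sum_j\beta_j e^{-i\lambda a_j}\Big|^2 d\lambda .
\end{align*}
The sum and the integral can be interchanged by \eqref{ConditionA}. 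Moreover, $\Re(1/(q+\Psi))=(q+\Re\Psi)/|q+\Psi|^2\ge 0$ because $\Re\Psi\ge0$.

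Next we let $q\to0$. The left-hand side converges to $\bm{\beta}^T\bm{H}\bm{\beta}$ by the pointwise convergence $h_q\to h$. On the right-hand side, the factor $|\sum_j\beta_je^{-i\lambda a_j}|^2$ vanishes to second order at $\lambda=0$ (because $\sum\beta_j=0$), so it is bounded by $C(\lambda^2\wedge 1)$. The integrand is therefore dominated by $C(\lambda^2\wedge1)/|\Psi(\lambda)|$, which is integrable by Lemma 15.5 of Tsukada \cite{Tukada}; this is the same bound that gives $M$ in \eqref{hq-bdd}. Dominated convergence then yields
\begin{align*}
\bm{\beta}^T\bm{H}\bm{\beta}=-\frac{1}{\pi}\int_0^\infty \Re\left(\frac{1}{\Psi(\lambda)}\right)\Big|\sum_j\beta_j e^{-i\lambda a_j}\Big|^2 d\lambda\le 0 ,
\end{align*}
where we use that $\Psi(\lambda)\neq0$ for a.e.\ $\lambda$; this holds by \eqref{ConditionA}, since otherwise $1/|q+\Psi|$ would fail to decay on a set of positive measure.

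The main obstacle is strictness. The trigonometric polynomial $\sum_j\beta_je^{-i\lambda a_j}$ with distinct $a_j$ and $\bm{\beta}\neq\bm{0}$ is a nonzero entire function, so it vanishes only on a discrete set. Strict negativity therefore reduces to showing that $\Re(1/\Psi)=\Re\Psi/|\Psi|^2$ is positive on a set of positive measure, i.e.\ that $\Re\Psi\not\equiv0$ a.e. Suppose instead $\Re\Psi\equiv0$. Then $|e^{-t\Psi}|=1$, so $X$ is a pure drift, and a pure drift violates \eqref{ConditionA} because $\int|q+i c\lambda|^{-1}d\lambda=\infty$. Hence $\Re\Psi>0$ on a set of positive measure. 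Since $\Re\Psi$ is continuous, it is in fact positive on an open interval, and the integral is strictly positive, which gives the strict inequality $\bm{\beta}^T\bm{H}\bm{\beta}<0$. The point needing the most care is the justification of the dominated convergence near $\lambda=0$, where we rely on recurrence only through the fact that the $r_q(0)$ terms have cancelled.
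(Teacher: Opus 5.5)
Your proof is correct and follows essentially the paper's route: the Fourier representation of $h_q$, dominated convergence with the integrable bound $(\lambda^2\wedge 1)/|\Psi(\lambda)|$ from Lemma 15.5 of Tsukada, and the identity $\bm{\beta}^T\bm{H}\bm{\beta}=-\frac{1}{\pi}\int_0^\infty\bigl|\sum_j\beta_je^{i\lambda a_j}\bigr|^2\Re\bigl(1/\Psi(\lambda)\bigr)\,d\lambda$. The only difference is order: you cancel $r_q(0)$ in the level-$q$ quadratic form before letting $q\to0$, whereas the paper first passes to the limit in the pairs $h(a_i-a_j)+h(a_j-a_i)$. Your strictness argument (a nonzero trigonometric polynomial has discrete zeros, and $\Re\Psi\not\equiv0$ because a pure drift violates \eqref{ConditionA}) supplies what the paper leaves implicit. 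The one loose point is your reason for $\Psi\neq0$ a.e.: a zero set of finite positive measure would not contradict \eqref{ConditionA}. The claim does follow at once from the integrability of $(\lambda^2\wedge1)/|\Psi|$, which you already invoke.
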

\begin{proof}
For $i<j$, by (\ref{hq-repre}) and the dominated convergence theorem, we have
\begin{align*}
h(a_i-a_j)+h(a_j-a_i)&=\lim_{q\to 0}\Big(h_q(a_i-a_j)+h_q(a_j-a_i)\Big)\\
&=\lim_{q\to 0}\int_0^\infty \Re \left(\frac{1-\cos (\lambda (a_i-a_j))}{q+\Psi(\lambda)}\right)d\lambda\\
&=\int_0^\infty \Re \left(\frac{1-\cos (\lambda (a_i-a_j))}{\Psi(\lambda)}\right)d\lambda.
\end{align*}
Thus, by the assumptions, we obtain
      \begin{align*}
\bm{\beta}^T\bm{H}\bm{\beta}&=\sum_{i,j=1}^n \beta_i \beta_j h(a_i-a_j)\\
&=\sum_{i<j}^n \beta_i \beta_j \Big(h(a_i-a_j)+h(a_j-a_i)\Big)\\
&=\sum_{i<j}^n \beta_i \beta_j \cdot \frac{2}{\pi}\int_0^\infty \Re \left(\frac{1-\cos (\lambda (a_i-a_j))}{\Psi(\lambda)}\right)d\lambda\\
&= -\frac{1}{\pi}\int_0^\infty \left|\sum_{i=1}^n\beta_i e^{{\mathrm i}\lambda a_i}\right|^2\Re \left(\frac{1}{\Psi(\lambda)}\right)d\lambda\\
&<0.
      \end{align*}
This completes the proof.
\end{proof}

\begin{lem}
\label{finite-lem3}
If $h(x)> 0$ for $x\neq 0$, then $\bm{H}$ is invertible.
\end{lem}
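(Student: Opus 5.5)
Suppose $\bm{H}\bm{x}=\bm{0}$ for some $\bm{x}\in\R^n$. The plan is to show $\bm{x}=\bm{0}$. We split according to whether $s:=\bm{1}^T\bm{x}$ vanishes, and use Lemma \ref{finite-lem2}, which says $\bm{H}$ is negative definite on the hyperplane $\{\bm{1}^T\bm{\beta}=0\}$.

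\emph{Case $s=0$.} Then $\bm{x}^T\bm{H}\bm{x}=\bm{x}^T\bm{0}=0$. If $\bm{x}\neq\bm{0}$, this contradicts Lemma \ref{finite-lem2}, so $\bm{x}=\bm{0}$.

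\emph{Case $s\neq 0$.} Here the aim is a contradiction with $h>0$ off the origin. Decompose $\bm{x}=\bm{\beta}+t\bm{e}$, where $\bm{1}^T\bm{\beta}=0$ and $\bm{e}$ is any fixed vector with $\bm{1}^T\bm{e}=1$. The choice $\bm{e}=\bm{e}_1$ (the first standard basis vector) is convenient, giving $t=s$. The more robust tool, however, is the quadratic form $Q(\bm{y}):=\bm{y}^T\bm{H}\bm{y}$.
\begin{itemize}
\item $Q$ is negative definite on the hyperplane $\{\bm{1}^T\bm{y}=0\}$, which has dimension $n-1$.
\item $Q(\bm{e}_i+\bm{e}_j)=h(a_i-a_j)+h(a_j-a_i)>0$ for $i\neq j$.
\end{itemize}
By Sylvester's law of inertia (Courant--Fischer), the symmetric matrix $\bm{H}_s:=(\bm{H}+\bm{H}^T)/2$ therefore has exactly $n-1$ negative eigenvalues and one positive eigenvalue. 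This is where $h>0$ enters.

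Note that invertibility of the non-symmetric $\bm{H}$ does not follow from the inertia of $\bm{H}_s$ alone, so we argue directly with $\bm{x}$.
\begin{itemize}
\item From $\bm{H}\bm{x}=\bm{0}$ we get $Q(\bm{x})=0$, and more generally $\bm{y}^T\bm{H}\bm{x}=0$ for every $\bm{y}$.
\item Take $\bm{y}=\bm{\beta}$, the projection of $\bm{x}$ onto the hyperplane: $\bm{\beta}:=\bm{x}-\tfrac{s}{n}\bm{1}$. Then $0=\bm{\beta}^T\bm{H}\bm{x}=Q(\bm{\beta})+\tfrac{s}{n}\bm{\beta}^T\bm{H}\bm{1}$.
\item From $Q(\bm{x})=0$ we likewise get $Q(\bm{\beta})+\tfrac{s}{n}\bigl(\bm{\beta}^T\bm{H}\bm{1}+\bm{1}^T\bm{H}\bm{\beta}\bigr)+\tfrac{s^2}{n^2}\bm{1}^T\bm{H}\bm{1}=0$.
\item Also $\bm{1}^T\bm{H}\bm{x}=0$ gives $\bm{1}^T\bm{H}\bm{\beta}=-\tfrac{s}{n}\bm{1}^T\bm{H}\bm{1}$.
\end{itemize}
Substituting the last two relations into the expansion of $Q(\bm{x})$, the cross terms cancel and we obtain
\begin{align*}
0 = Q(\bm{\beta})+\tfrac{s}{n}\bm{\beta}^T\bm{H}\bm{1} = Q(\bm{\beta}) - Q(\bm{\beta}) \cdot 1 ,
\end{align*}
which is consistent, so these identities alone do not close the argument. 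The main obstacle is exactly this step: extracting a contradiction for $s\neq 0$ from the non-symmetric $\bm{H}$.

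The first thing I would try is a sign argument. Since $\bm{1}^T\bm{H}\bm{1}=\sum_{i\neq j}h(a_i-a_j)>0$, the vector $\bm{1}$ is a positive direction for $Q$. I would then try to show $Q(\bm{\beta})=0$, forcing $\bm{\beta}=\bm{0}$ by Lemma \ref{finite-lem2}. That would leave $\bm{x}=\tfrac{s}{n}\bm{1}$, whence $\bm{H}\bm{1}=\bm{0}$, impossible because $(\bm{H}\bm{1})_i=\sum_{j\neq i}h(a_i-a_j)>0$.

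If that fails, the fallback is to return to the $q$-level. The matrix $\bigl(r_q(a_j-a_i)\bigr)_{i,j}$ is invertible, because the $q$-capacity measure of $\{a_1,\dots,a_n\}$ exists and is unique (Sato). One then uses the expansion
\begin{align*}
r_q(0)\bm{1}\bm{1}^T-\bm{H}_q = r_q(0)\Bigl(\bm{1}\bm{1}^T-\tfrac{1}{r_q(0)}\bm{H}_q\Bigr)
\end{align*}
together with a Schur-complement/bordered-determinant computation. This should identify $\det\bm{H}$, up to sign, with the limit of $r_q(0)^{-(n-1)}\det\bigl(r_q(a_j-a_i)\bigr)$ times $\bm{1}^T\bm{H}^{-1}\bm{1}$-type quantities. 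The hope is to show this limit is nonzero using $k(\{a_1,\dots,a_n\})\ge k(\{a_1,a_2\})>0$, by Lemma \ref{finite-lem1} and Proposition \ref{two-prop}.
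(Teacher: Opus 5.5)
Your case $\bm{1}^T\bm{x}=0$ is the paper's argument. The case $\bm{1}^T\bm{x}\neq 0$, however, is left open, and your first plan cannot close it. The only facts it uses are a zero diagonal, positive off-diagonal entries, and the negativity from Lemma~\ref{finite-lem2}, and these do not imply invertibility. For instance, take
\[
\bm{H}_0=\begin{pmatrix}
0&1&1+s&1+s\\
1&0&1+s&1+s\\
1-s&1-s&0&1\\
1-s&1-s&1&0
\end{pmatrix},\qquad s=\tfrac{\sqrt{3}}{2}.
\]
Its symmetric part is $\bm{1}\bm{1}^T$ minus the identity, so $\bm{\beta}^T\bm{H}_0\bm{\beta}=-|\bm{\beta}|^2<0$ for every nonzero $\bm{\beta}$ with $\bm{1}^T\bm{\beta}=0$. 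It also has exactly the inertia you derived: one positive and three negative eigenvalues. Yet $\bm{H}_0\bm{u}=\bm{0}$ for $\bm{u}=\tfrac14(1+\sqrt{3},1+\sqrt{3},1-\sqrt{3},1-\sqrt{3})^T$, and $\bm{1}^T\bm{u}=1$. So no manipulation of $Q$ can force $Q(\bm{\beta})=0$, and some probabilistic input about $h$ is indispensable.

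The needed input is the one you mention only at the very end. It enters directly, with no $q$-level determinant or Schur-complement computation. Since $\mu_B$ is a probability measure on $B=\{a_1,\dots,a_n\}$ and $\varphi_B(a_i)=0$, evaluating Theorem~\ref{main1} at $x=a_i$ gives a vector $\bm{\alpha}$ with $\bm{1}^T\bm{\alpha}=1$ and $\bm{H}\bm{\alpha}=k\bm{1}$, where $k=k(\{a_1,\dots,a_n\})$.

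Normalize the kernel vector so that $\bm{1}^T\bm{x}=1$ and set $\bm{\gamma}=\bm{\alpha}-\bm{x}$. Then $\bm{1}^T\bm{\gamma}=0$ and $\bm{\gamma}^T\bm{H}\bm{\gamma}=\bm{\gamma}^T(k\bm{1}-\bm{H}\bm{x})=k\,\bm{\gamma}^T\bm{1}=0$. Lemma~\ref{finite-lem2} forces $\bm{\gamma}=\bm{0}$, hence $k\bm{1}=\bm{H}\bm{\alpha}=\bm{H}\bm{x}=\bm{0}$. This contradicts $k\ge k(\{a_1,a_2\})>0$, which follows from Lemma~\ref{finite-lem1} and Proposition~\ref{two-prop}. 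That is the only place where $h>0$ off the origin is really used, and it is the paper's proof.

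In the example above, the system $\bm{H}_0\bm{\alpha}=k\bm{1}$, $\bm{1}^T\bm{\alpha}=1$ is solved by $\bm{\alpha}=\bm{u}$ with $k=0$. So positivity of the Robin constant is exactly what rules the example out.
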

\begin{proof}
   Assume that $\bm{H}\bm{\beta}=\bm{0}$. 
    \begin{enumerate}
      \item Case 1: $\bm{1}^T\bm{\beta}=0$. If $\bm{\beta}\neq \bm{0}$, then by Lemma \ref{finite-lem2}, we have
      \begin{align*}
        0>\bm{\beta}^T \bm{H\beta}=\bm{\beta}^T (\bm{H\beta})=0.
      \end{align*}
      This is a contradiction. Thus, in this case, we must have $\bm{\beta}=\bm{0}$.
      \item Case 2: $\bm{1}^T \bm{\beta}\neq 0$. Replacing $\beta$ by $\frac{\bm{\beta}}{\bm{1}^T\bm{\beta}}$, we may assume without loss of generality that $\bm{1}^T\bm{\beta}=1.$  By Theorem \ref{main1}, we can write
    \begin{align}
    \label{finite-equi}
      \mu_{\{a_1,...,a_n\}}(dy)=\sum_{j=1}^n \alpha_j \delta_{a_j}(dy),\qquad \bm{1}^T\bm{\alpha}=1,\qquad \bm{H}\bm{\alpha}=k(\{a_1,...,a_n\})\bm{1},
    \end{align}
    where
    \begin{align*}
\bm{\alpha}:=\Big(\alpha_1,...,\alpha_n\Big)^T.
    \end{align*} 
Here, the final expression in (\ref{finite-equi}) follows by setting $x=a_i$ in (\ref{main1-eq}).
  Let $\bm{\gamma}:=\bm{\alpha}-\bm{\beta}$. Then, we have $\bm{1}^T\bm{\gamma}=0$ and
      \begin{align*}
\bm{\gamma}^T\bm{H}\bm{\gamma}=\bm{\gamma}^T(\bm{H}\bm{\alpha}-\bm{H\beta})=k(\{a_1,...,a_n\})\bm{\gamma}^T\bm{1}=0.
      \end{align*}
      Thus, Lemma \ref{finite-lem2} implies that $\bm{\gamma}=\bm{0}$, and hence $\bm{\alpha}=\bm{\beta}$. Therefore, we have
      \begin{align*}
k(\{a_1,...,a_n\})\bm{1}=\bm{H}\bm{\alpha}=\bm{H}\bm{\beta}=\bm{0}.
      \end{align*}
      This implies that $k(\{a_1,...,a_n\})=0.$
      However, by Lemma \ref{finite-lem1} and Proposition \ref{two-prop}, we have
      \begin{align*}
        k(\{a_1,...,a_n\})\ge k(\{a_1,a_2\})=\frac{h(a_1-a_2)h(a_2-a_1)}{h(a_1-a_2)+h(a_2-a_1)}>0.
      \end{align*}
      Which is a contradiction. v Consequently, we obtain $\bm{\beta}=\bm{0}$.
    \end{enumerate}
   In either case, it follows that $\bm{H}$ is invertible.
\end{proof}

\begin{lem}
\label{finite-lem4}
If $h(x)> 0$ for $x\neq 0$, then $\bm{1}^T \bm{H}^{-1}\bm{1}\neq 0$. 
\end{lem}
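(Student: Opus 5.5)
Suppose $h(x)>0$ for $x\neq 0$; by Lemma \ref{finite-lem3} the matrix $\bm{H}$ is invertible. We argue by contradiction and assume $\bm{1}^T\bm{H}^{-1}\bm{1}=0$. The plan is to exhibit a nonzero vector with zero coordinate sum on which the quadratic form of $\bm{H}$ vanishes. This would contradict Lemma \ref{finite-lem2}.

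Set $\bm{\beta}:=\bm{H}^{-1}\bm{1}$. First, $\bm{\beta}\neq\bm{0}$, since $\bm{H}\bm{\beta}=\bm{1}\neq\bm{0}$. Second, the assumption says exactly that $\bm{1}^T\bm{\beta}=\bm{1}^T\bm{H}^{-1}\bm{1}=0$. We then compute
\begin{align*}
\bm{\beta}^T\bm{H}\bm{\beta}=\bm{\beta}^T(\bm{H}\bm{H}^{-1}\bm{1})=\bm{\beta}^T\bm{1}=0 .
\end{align*}
Lemma \ref{finite-lem2} says that $\bm{\beta}^T\bm{H}\bm{\beta}<0$ for every nonzero $\bm{\beta}$ with $\bm{1}^T\bm{\beta}=0$. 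This contradicts the computation above, so $\bm{1}^T\bm{H}^{-1}\bm{1}\neq 0$.

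There is no real obstacle here. The only point to check is that $\bm{\beta}$ is a legitimate test vector for Lemma \ref{finite-lem2}: it is nonzero and has zero coordinate sum, and both facts follow directly from invertibility and the contradiction hypothesis.

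An alternative route gives more. By \eqref{finite-equi}, $\bm{H}\bm{\alpha}=k\bm{1}$ with $\bm{1}^T\bm{\alpha}=1$, where $k=k(\{a_1,\dots,a_n\})$. Hence $\bm{\alpha}=k\bm{H}^{-1}\bm{1}$, and summing coordinates gives $1=k\,\bm{1}^T\bm{H}^{-1}\bm{1}$. This forces $\bm{1}^T\bm{H}^{-1}\bm{1}\neq0$, and it also yields the identity $k=1/(\bm{1}^T\bm{H}^{-1}\bm{1})$ used in Proposition \ref{finit-proposition}. I would record this version as well, since it feeds directly into case 3 of Proposition \ref{finit-proposition}.
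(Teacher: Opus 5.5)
Your main argument is exactly the paper's proof: take $\bm{\beta}=\bm{H}^{-1}\bm{1}$, observe that it is nonzero with $\bm{1}^T\bm{\beta}=0$, and contradict Lemma~\ref{finite-lem2} via $\bm{\beta}^T\bm{H}\bm{\beta}=\bm{\beta}^T\bm{1}=0$. Your alternative route through $\bm{H}\bm{\alpha}=k\bm{1}$ is also correct, given Lemma~\ref{finite-lem3} and the representation of Theorem~\ref{main1}, and it yields case 3 of Proposition~\ref{finit-proposition} directly, without the Schur complement step.
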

\begin{proof}
  We assume $\bm{1}^T \bm{H}^{-1}\bm{1}= 0$. Let $\bm{\beta}:=\bm{H}^{-1}\bm{1}$. Then we have
    \begin{align*}
\bm{1}^T\bm{\beta}=\bm{1}^T\bm{H}^{-1}\bm{1}=0.
    \end{align*}
Clearly, $\bm{\beta}\neq \bm{0}$. Thus, by Lemma \ref{finite-lem2}, we obtain
    \begin{align*}
      0>\bm{\beta}^T \bm{H}\bm{\beta}=\bm{\beta}^T(\bm{H}\bm{H}^{-1}\bm{1})=\bm{\beta}^T \bm{1}=0.
    \end{align*}
   This is a contradiction and completes the proof.
\end{proof}

We now prove Proposition \ref{finit-proposition}.

\begin{proof}[Proof of Proposition \ref{finit-proposition}]
Recall that we can write
\begin{align*}
      \mu_{\{a_1,...,a_n\}}(dy)=\sum_{j=1}^n \alpha_j \delta_{a_j}(dy),\qquad \bm{1}^T\bm{\alpha}=1,\qquad \bm{H}\bm{\alpha}=k(\{a_1,...,a_n\})\bm{1}.
    \end{align*}
   Then we have the following matrix equation:
    \begin{align*}
      \begin{pmatrix}
        \bm{H} &-\bm{1}\\
        \bm{1}^T& 0
      \end{pmatrix}
      \begin{pmatrix}
        \bm{\alpha}\\
        k(\{a_1,...,a_n\})
      \end{pmatrix}
      =
      \begin{pmatrix}
        \bm{0}\\
        1
      \end{pmatrix}.
    \end{align*}
   The first two claims in Proposition \ref{finit-proposition} are clear, so we only prove the remaining case. We assume $h(x)> 0$ for $x\neq 0$. By Lemmas \ref{finite-lem3} and \ref{finite-lem4}, we can apply the Schur complement formula. Thus, we have
    \begin{align*}
      \begin{pmatrix}
        \bm{\alpha}\\
        k(\{a_1,...,a_n\})
      \end{pmatrix}&=\begin{pmatrix}
        \bm{H} &-\bm{1}\\
        \bm{1}^T& 0
      \end{pmatrix}^{-1} \begin{pmatrix}
        \bm{0}\\
        1
      \end{pmatrix}=\begin{pmatrix}
       \displaystyle \frac{\bm{H}^{-1}\bm{1}}{\bm{1}^T\bm{H}^{-1}\bm{1}}\\
       \ \\
        \displaystyle\frac{1}{\bm{1}^T\bm{H}^{-1}\bm{1}}
      \end{pmatrix}.
    \end{align*}
    This proves the third assertion and completes the proof.
\end{proof}

We provide an alternative representation of the quantities obtained in the Case 3 in the Proposition~\ref{finit-proposition}. Its derivation uses the fact that $\mu_{\{a_1,...,a_n\}}$ can be obtained as the limit of the rescaled $q$-capacity measures.
\begin{cor}
\label{finite-corollary}
If $h(x)>0$ for $x\neq 0$, then
\begin{align*}
   \lim_{q\to 0}\left(r_q(0)\sum_{i=1}^n \bm{R}_q^{-1}(i,j)\right)= \left(\frac{\bm{H}^{-1}\bm{1}}{\bm{1}^T \bm{H}^{-1}\bm{1}}\right)_j,
\end{align*}
where 
\begin{align*}
    \bm{R}_q:=\Big(r_q(a_i-a_j)\Big)_{i,j=1}^n=\begin{pmatrix}
    r_q(0)&r_q(a_1-a_2)&\cdots &r_q(a_1-a_n)\\
    r_q(a_2-a_1)&r_q(0)&\cdots &r_q(a_2-a_n)\\
    \vdots &\vdots &&\vdots\\
    r_q(a_n-a_1)&r_q(a_n-a_2)&\cdots &r_q(0)
  \end{pmatrix}.
  \end{align*}
\end{cor}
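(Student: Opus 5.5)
We first identify the $q$-capacity measure of $B=\{a_1,\dots,a_n\}$ explicitly in terms of $\bm{R}_q$, and then invoke Proposition~\ref{main-prop} together with case 3 of Proposition~\ref{finit-proposition}.

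Since $m_B^q$ is supported on $B$, we write $m_B^q=\sum_{j=1}^n m_j^q\delta_{a_j}$. We evaluate (\ref{q-cap}) at $x=a_i$. When the points of $B$ are regular for themselves, which holds under (\ref{ConditionA}) since single points are then regular, we have $T_B=0$ $\P_{a_i}$-a.s., so $\E_{a_i}[e^{-qT_B}]=1$. This gives the linear system
\begin{align*}
\sum_{j=1}^n r_q(a_j-a_i)\,m_j^q=1,\qquad i=1,\dots,n,
\end{align*}
that is, $\bm{R}_q^T\bm{m}^q=\bm{1}$, with $\bm{m}^q:=(m^q_1,\dots,m^q_n)^T$.

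Next we need invertibility of $\bm{R}_q$. Writing $r_q(a_i-a_j)=r_q(0)-h_q(a_j-a_i)$, we get $\bm{R}_q=r_q(0)\bm{1}\bm{1}^T-\bm{H}_q^T$, where $\bm{H}_q:=(h_q(a_i-a_j))_{i,j}$. Since $h_q\to h$ uniformly on compacts, $\bm{H}_q\to\bm{H}$, and $\bm{H}$ is invertible by Lemma~\ref{finite-lem3}. By the matrix determinant lemma,
\begin{align*}
\det\bm{R}_q=\det(-\bm{H}_q^T)\bigl(1-r_q(0)\,\bm{1}^T\bm{H}_q^{-1}\bm{1}\bigr),
\end{align*}
and this is nonzero for small $q$ because $r_q(0)\to\infty$ and $\bm{1}^T\bm{H}_q^{-1}\bm{1}\to\bm{1}^T\bm{H}^{-1}\bm{1}\neq0$ by Lemma~\ref{finite-lem4}. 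Hence $\bm{m}^q=(\bm{R}_q^T)^{-1}\bm{1}=(\bm{R}_q^{-1})^T\bm{1}$, so that
\begin{align*}
m_j^q=\sum_{i=1}^n\bm{R}_q^{-1}(i,j).
\end{align*}
This is exactly the column sum appearing in the statement.

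Multiplying by $r_q(0)$, the quantity $r_q(0)\sum_i\bm{R}_q^{-1}(i,j)$ is the mass that $\mu_B^q=r_q(0)m_B^q$ places on $a_j$. By Proposition~\ref{main-prop}, $\mu_B^q\Rightarrow\mu_B$. Because $B$ is finite, each point $a_j$ is isolated in $B$, so its singleton is a $\mu_B$-continuity set and weak convergence yields convergence of the individual atom masses. Case 3 of Proposition~\ref{finit-proposition} identifies the limiting mass at $a_j$ as $(\bm{H}^{-1}\bm{1})_j/\bm{1}^T\bm{H}^{-1}\bm{1}$, which is the claim.

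As a cross-check, a direct Sherman--Morrison computation gives the same answer without appealing to Proposition~\ref{main-prop}: $(\bm{R}_q^{-1})^T\bm{1}=(r_q(0)\bm{1}\bm{1}^T-\bm{H}_q)^{-1}\bm{1}$ evaluates to $\bm{H}_q^{-1}\bm{1}/(r_q(0)\bm{1}^T\bm{H}_q^{-1}\bm{1}-1)$, and multiplying by $r_q(0)$ and letting $q\to0$ gives the same limit.

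The main obstacle is justifying $\E_{a_i}[e^{-qT_B}]=1$, i.e.\ regularity of points for themselves. If this is not available, it is safer to use the Sherman--Morrison route directly, checking instead that $m_B^q$ solves the system at the points of $B$ up to polar sets. Since points are non-polar under (\ref{ConditionA}) and (\ref{HP-Lap}) gives $\E_{a}[e^{-qT_{\{a\}}}]=1$, the regularity should hold in any case.
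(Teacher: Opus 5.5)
Your proof is correct, but it identifies $m_B^q$ by a different route than the paper.

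\emph{The paper's route.} The paper works with $p_i^q(x)=\E_x[e^{-qT_{\{a_i\}}},\,T_{\{a_i\}}=T_B]$ for \emph{every} starting point $x$. The strong Markov property gives $\bm{R}_q\bm{p}^q(x)=(r_q(a_j-x))_j$, and Theorem 2.4 of \cite{G} supplies invertibility of $\bm{R}_q$ for all $q>0$. Summing over $i$ and reading off coefficients yields $m_B^q=\sum_j\bigl(\sum_i\bm{R}_q^{-1}(i,j)\bigr)\delta_{a_j}$. The limit is then obtained by comparing atoms with $\mu_B$, exactly as in your main route.

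\emph{Your route.} You evaluate (\ref{q-cap}) only at the points of $B$. There the left side is $1$; with the paper's convention $T_B=\inf\{t\ge 0;\ X_t\in B\}$ this is trivial, so your regularity worry is moot. This reduces everything to the linear system $\bm{R}_q^T\bm{m}^q=\bm{1}$. It is shorter, and it makes explicit the uniqueness step the paper leaves implicit when passing from the identity for all $x$ to the atoms of $m_B^q$. Your determinant-lemma argument gives invertibility only for small $q$. That is enough for a limit, and it replaces the citation of Getoor by Lemmas~\ref{finite-lem3} and~\ref{finite-lem4}.

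\emph{What each buys.} The paper's computation yields more information: the joint Laplace transform of $(T_B,X_{T_B})$ from every starting point. Your Sherman--Morrison formula, with $\bm{H}_q=(h_q(a_i-a_j))_{i,j}$, says that the vector of atoms of $\mu_B^q$ equals $r_q(0)\bm{H}_q^{-1}\bm{1}/(r_q(0)\bm{1}^T\bm{H}_q^{-1}\bm{1}-1)$. This gives the limit directly, without weak convergence. It also gives $k_q(B)=r_q(0)/(r_q(0)\bm{1}^T\bm{H}_q^{-1}\bm{1}-1)\to 1/\bm{1}^T\bm{H}^{-1}\bm{1}$. So it is a direct rederivation of case 3 of Proposition~\ref{finit-proposition}, given Lemmas~\ref{finite-lem3} and~\ref{finite-lem4}.

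\emph{One small correction.} $\{a_j\}$ is a $\mu_B$-continuity set only in the relative topology of $B$. In $\R$ its boundary is $\{a_j\}$ itself, which may carry positive mass. It is cleaner to integrate against a continuous function equal to $1$ at $a_j$ and $0$ at the other $a_k$.
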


\begin{Rem}
We point out that the matrix $\bm{R}_q$ has a close connection with the trace process (or the embedded process). For a process $X$, its trace process $Y$ is the continuous-time Markov chain obtained by time-changing $X$ with the right-continuous inverse of the local time of $X$ on a finite set $\{a_1,...,a_n\}$:
\begin{align*}
  Y_u:=X_{A_u^{-1}}\qquad \text{for}\ u\ge 0,
\end{align*}
where 
\begin{align*}
  A_t:=\alpha_1L_t^{a_1}+\cdots +\alpha_n L_t^{a_n},\qquad A_u^{-1}:=\inf \{t>0;\ A_t>u\};
\end{align*}
with $0<\alpha_1,\ldots, \alpha_n<\infty$ determined by the normalization of the local time chosen. 
Getoor \cite{G} showed that the infinitesimal generator, $\bm{Q},$ of $Y$ is given as the limit of $-\bm{R}_q^{-1}$ as $q\to 0$:
\begin{align*}
  -\lim_{q\to 0}\bm{R}_q^{-1}=\bm{Q},
\end{align*}
see Proposition 5.3 of \cite{G}. Furthermore, Iba \cite{Iba-Qmat} established an explicit formula for each entry of the generator $\bm{Q}$ in terms of the renormalized zero resolvent $h$. More generally, the trace process defined through the local time on a general set is referred to as the boundary process. This process plays an essential role in the description of excursions from the set. For more details, we refer the reader to the work by Kaspi \cite{Kaspi}.
\end{Rem}

\begin{proof}[Proof of Corollary \ref{finite-corollary}] 
Define the functions 
\begin{align*}
  p_i^q(x):=\E_x\Big[e^{-qT_{\{a_i\}}},\ T_{\{a_i\}}=T_{\{a_1,...,a_n\}}\Big],\qquad x\in\R.
\end{align*}
 By (\ref{q-cap}), the $q$-capacity measure $m_{\{a_1,...,a_n\}}^q(dy),$ can be written in terms of these functions as follows
  \begin{align*}
  \label{finite-qcap}
\int r_q(y-x)m_{\{a_1,...,a_n\}}^q(dy)=\E_x\Big[e^{-qT_{\{a_1,...,a_n\}}}\Big]=\sum_{i=1}^n p_i^q(x).
     \stepcounter{equation}\tag{\theequation}
  \end{align*}
Next, we compute $p_i^q(x)$ for $i=1,...,n$ using the strong Markov property. Indeed,  we have by (\ref{HP-Lap}), that 
  \begin{align*}
  p_i^q(x)& =\E_x\Big[e^{-qT_{\{a_i\}}}\Big]-\sum_{\substack{1\le j\le n\\ j\neq i}}\E_x\Big[e^{-qT_{\{a_i\}}},\ T_{\{a_j\}}=T_{\{a_1,...,a_n\}}\Big]\\
  &=\E_x\Big[e^{-qT_{\{a_i\}}}\Big]-\sum_{\substack{1\le j\le n\\ j\neq i}}p_j^q(x)\E_{a_j}\Big[e^{-qT_{\{a_i\}}}\Big]\\
  &=\frac{r_q(a_i-x)}{r_q(0)}-\sum_{\substack{1\le j\le n\\ j\neq i}}p_j^q(x)\frac{r_q(a_i-a_j)}{r_q(0)}.
  \end{align*}
  The above can be expressed in matrix form as follows: for all $x\in\R$
  \begin{align*}
  {\tiny
  \begin{pmatrix}
    p_1^q(x)\\
    p_2^q(x)\\
    \vdots\\
    p_n^q(x)
  \end{pmatrix}=\frac{1}{r_q(0)}
  \begin{pmatrix}
    r_q(a_1-x)\\
    r_q(a_2-x)\\
    \vdots \\
    r_q(a_n-x)
  \end{pmatrix}
  -\frac{1}{r_q(0)}\begin{pmatrix}
    0&r_q(a_1-a_2)&\cdots &r_q(a_1-a_n)\\
    r_q(a_2-a_1)&0&\cdots &r_q(a_2-a_n)\\
    \vdots &\vdots &&\vdots\\
    r_q(a_n-a_1)&r_q(a_n-a_2)&\cdots &0
  \end{pmatrix}
  \begin{pmatrix}
    p_1^q(x)\\
    p_2^q(x)\\
    \vdots\\
    p_n^q(x)
  \end{pmatrix}}.
  \end{align*}
By Theorem 2.4 of Getoor \cite{G}, the matrix $\bm{R}_q$ is invertible. Thus, this matrix equation can be solved, and we have
\begin{align*}
  p_i^q(x)=\sum_{j=1}^n\bm{R}_q^{-1}(i,j)r_q(a_j-x)\qquad \text{for}\ x\in\R, \  i=1,...,n.
\end{align*}
Therefore, by (\ref{finite-qcap}), we obtain
  \begin{align*}
\int r_q(y-x)m_{\{a_1,...,a_n\}}^q(dy)&=\sum_{i=1}^n p_i^q(x)\\
&=\sum_{i=1}^n\sum_{j=1}^n\bm{R}_q^{-1}(i,j)r_q(a_j-x)\\
&=\sum_{j=1}^n\sum_{i=1}^n\bm{R}_q^{-1}(i,j)r_q(a_j-x).
  \end{align*}
Since $m_{\{a_1,...,a_n\}}^q(dy)$ is supported on $\{a_1,...,a_n\}$, we obtain
\begin{align*}
  m_{\{a_1,...,a_n\}}^q(dy)=\sum_{j=1}^n\left(\sum_{i=1}^n \bm{R}_q^{-1}(i,j)\right)\delta_{a_j}(dy).
\end{align*}
Therefore, by comparing the atoms of $\mu_{\{a_1,...,a_n\}}$ and $m_{\{a_1,...,a_n\}}^q$, we obtain the assertion.
\end{proof}

%%%%%%%%%%%%%%%%%%%%%%%%%%%%%%%%%%%%%%%%%%%%%%%%%%%%%%%%%%%%%%%%%%%%%%%%%%%%%%%%%%%%%%%%%%%%%%%%%%%%%%%%%%%%%%%%%%%%%%%%%%%%%%%%%%%%%%%%%%%%%%%%%%%%%%%%%%%%%%%%%%%%%%%%%%%%%%%%%%%%%%%%%%%%%%%%%%%%%%%%%%%%%%%%%%%%%%%%%%%%%%%%%%%%%%%%%%%%%%%%%%%%%%%%%%%%%%%%%%%%%%%%%%%%%%%%%%%%%%%%%%%%%%%%%%%%%%%%

\subsection{Recurrent Stable Processes and the Interval Case}
\label{S5}
Let $\alpha\in (1,2)$. The characteristic exponent $\Psi(\lambda)$ of a recurrent $\alpha$-stable process $(X_t, t\geq 0)$ has the form
\begin{align*}
  \Psi(\lambda)=|\lambda|^\alpha\left(1-i\beta \mathrm{sgn}(\lambda)\tan \frac{\pi\alpha}{2}\right)\qquad \text{for}\ \lambda\in \R,
\end{align*}
where $\beta\in [-1,1]$ (see, e.g., Theorem 14.10 of \cite{Sato}). For a constant $c>0$, a \Levy\ process with characteristic exponent $\Psi^{(c)}(\lambda):=c\Psi(\lambda)$ is also called a stable process. This constant $c$ is called here the normalization constant. For general $c\neq 1$, the corresponding objects will be denoted with a superscript, such as $T_B^{(c)}$. While when $c=1,$ we will omit the superscript. Before stating the main result of this section, we make a brief digression to describe how changing the normalizing constant would change the quantities of interest. 

For any normalization constant $c\neq 1$, we have
\begin{align}
\label{normalization-c}
   (X_t^{(c)}, t\geq 0)\deq (X_{c t}, t\geq 0),\qquad T_{[-1,1]}^{(c)}\deq \frac{1}{c}T_{[-1,1]};
\end{align}
and
\begin{align*}
u_{[-1,1]}(x,y)dy&=\int_0^\infty \P_x\Big(T_{[-1,1]}>t,\ X_t\in dy\Big)dt\\
&=c\int_0^\infty \P_x\Big(T_{[-1,1]}>uc ,\ X_{uc}\in dy\Big)du\\
&=c \int_0^\infty \P_x\Big(T_{[-1,1]}^{(c)}>u,\ X_u^{(c)}\in dy\Big)du\\
&=c\cdot u_{[-1,1]}^{(c)}(x,y)dy.
\end{align*}
We define the positivity parameter by $\rho:=\P_0(X_t\ge 0)$. In this case, $\rho$ ranges over $[1-\frac{1}{\alpha},\frac{1}{\alpha}]$. It is known that $\rho$ can be computed in terms of the parameter $\beta$:
\begin{align}
\label{rho-beta}
  \rho=\frac{1}{2}+\frac{1}{\pi\alpha}\tan^{-1}\left(\beta\tan \frac{\pi\alpha}{2}\right).
\end{align}
By the scaling property, $\rho$ does not depend on the time and hence neither on the choice of normalization. Thus, it follows from (\ref{rho-beta}) that $\beta$ is also independent of the choice of normalization.

Moreover, it is worth to mention that the equilibrium measure $\mu_{[a,b]}(dy)$ does not depend on the choice of the normalization constant, whereas the Robin constant $k([a,b])$ does. Indeed, by (\ref{resol-den}), after multiplying and dividing by $c,$ we obtain the equality, for any $x\in\R$
\begin{align*}
  r_q(x)=\frac{1}{\pi}\int_0^\infty \Re \left(\frac{e^{i\lambda x}}{q+\Psi(\lambda)}\right)d\lambda=\frac{c}{\pi}\int_0^\infty \Re \left(\frac{e^{i\lambda x}}{cq+\Psi^{(c)}(\lambda)}\right)d\lambda=cr_{cq}^{(c)}(x).
\end{align*}
Thus, by (\ref{phiq}) and (\ref{normalization-c}), we have for any $x\in\R,$
\begin{align*}
  \varphi_{[a,b]}(x)=\lim_{q\to 0}r_q(0)\P_x(\bm{e}_q<T_{[a,b]})=\lim_{q\to 0}cr_{cq}^{(c)}(0)\P_x(\bm{e}_q<cT_{[a,b]}^{(c)})=c\varphi_{[a,b]}^{(c)}(x).
\end{align*}
Similarly, we have $h(x)=ch^{(c)}(x),$ for $x\in\R.$ Thus, we obtain that for any $x\in\R,$ we have the equality
\begin{align*}
  \varphi_{[a,b]}(x)&=c\varphi_{[a,b]}^{(c)}(x)\\
  &=\int ch^{(c)}(x-y)\mu_{[a,b]}^{(c)}(dy)-ck^{(c)}([a,b])\\
  &=\int h(x-y)\mu_{[a,b]}^{(c)}(dy)-ck^{(c)}([a,b]).
\end{align*}
Therefore, by the uniqueness of the equilibrium measure and the Robin constant, we obtain
\begin{align*}
  \mu_{[a,b]}(dy)=\mu_{[a,b]}^{(c)}(dy),\qquad k([a,b])=ck^{(c)}([a,b]).
\end{align*}

\subsubsection{The Equilibrium Measure and the Robin Constant}
\label{SS5.1}
In the next result, using results of D\"{o}ring-Kyprianou-Weissmann \cite{DKW} and Port \cite{PStable}, we establish an expression for the equilibrium measure and the Robin constant for the interval $[a,b]$.

\begin{prop}
\label{stable-genralinterval}
Let $a<b$. It holds that
\begin{align*}
  \mu_{[a,b]}(dy)&=\frac{(b-a)^{\alpha-1}}{B(1-\alpha\hat{\rho},1-\alpha\rho)}(b-y)^{-\alpha\rho}(y-a)^{-\alpha\hat{\rho}}1_{(a,b)}dy,\\
  k([a,b])&=-\frac{(b-a)^{\alpha-1}\pi \cos (\pi\alpha(\rho-\frac{1}{2}))}{\Gamma(\alpha)\sin (\pi\alpha)B(1-\alpha\rho,1-\alpha\hat{\rho})}.
\end{align*}
\end{prop}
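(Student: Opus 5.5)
By the uniqueness part of Theorem~\ref{main1}, it suffices to exhibit a probability measure $\mu$ on $[a,b]$ and a constant $k$ such that $\varphi_{[a,b]}(x)=\int h(x-y)\mu(dy)-k$ for all $x\in\R$. Because $X$ is invariant under translation, we have $\varphi_{[a,b]}(x)=\varphi_{[0,b-a]}(x-a)$ and $h(x-y)=h((x-a)-(y-a))$. By self-similarity (index $\alpha$, with $h$ homogeneous of degree $\alpha-1$ and $r_q(0)$ scaling so that $\varphi$ is also homogeneous of degree $\alpha-1$), the general interval reduces to the single case $[-1,1]$, or equivalently $[0,1]$. Under this reduction $\mu$ is transported by the affine map, which produces the factor $(b-a)^{\alpha-1}$ times the normalized Beta density, and $k$ scales by $((b-a)/2)^{\alpha-1}$. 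This accounts for the displayed factor $(b-a)^{\alpha-1}$ in both formulas, once the constants are collected.

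For $[-1,1]$ I would take the candidate $\mu(dy)=c\,(1-y)^{-\alpha\rho}(1+y)^{-\alpha\hat\rho}\,dy$, normalized to be a probability measure. The key input is a Riesz-type potential identity for stable processes, as used in Port~\cite{PStable} and D\"oring--Kyprianou--Weissmann~\cite{DKW}: for $x\in[-1,1]$,
\begin{align*}
\int_{-1}^1 (1-\beta\,\mathrm{sgn}(x-y))|x-y|^{\alpha-1}(1-y)^{-\alpha\rho}(1+y)^{-\alpha\hat\rho}\,dy=\text{const},
\end{align*}
which makes $\int h(x-y)\mu(dy)$ constant on $[-1,1]$. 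I would verify this either by citing the known formula for the hitting distribution/harmonic measure of $[-1,1]$ from \cite{DKW}, or by the standard Beta-integral computation that uses the relation (\ref{rho-beta}) between $\rho$ and $\beta$. Since $\varphi_{[-1,1]}=0$ on $[-1,1]$, this constant must equal $k$.

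To confirm the representation off the interval, I would compare it with the explicit invariant function from \cite{DKW}, namely $\varphi_{[-1,1]}(x)=\text{const}\cdot\int_1^{x}(u-1)^{\alpha\rho-1}(u+1)^{\alpha\hat\rho-1}du$ for $x>1$, with the symmetric expression for $x<-1$. An alternative route avoids this comparison. Define $g(x)=\int h(x-y)\mu(dy)-k$. By Theorem~\ref{main2}, applied with $A=\int L^y\mu(dy)$ (whose Revuz measure is supported in $[-1,1]$), the process $g(X_{t\wedge T_{[-1,1]}})$ is a martingale. Since $g=0$ on the interval, $g$ is therefore invariant for the killed process, and Iba's characterization, namely the limit (\ref{phiq}) together with the growth $g(x)\sim h(x)$ at infinity, identifies $g$ with $\varphi_{[-1,1]}$. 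The cleaner path, however, is to invoke uniqueness directly: Theorem~\ref{main1} itself says $\varphi=\mu_BH-k(B)$ with $\mu_B$ supported on $B$, and the identity $\mu H=\text{const}$ on $B$ for a probability $\mu$ on $B$ already determines $\mu$. The argument for this is the negative-definiteness of $h$ on signed measures of zero total mass, as in Lemma~\ref{finite-lem2}, extended to measures by the Fourier representation of $h+\check h$ as an integral of $\Re(1/\Psi)$. Hence $\mu_{[-1,1]}=\mu$ and $k([-1,1])$ equals the constant.

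The main obstacle is the explicit evaluation of the constant $k$ in the Riesz-type identity, which yields the stated expression with $\Gamma(\alpha)\sin(\pi\alpha)$ and $\cos(\pi\alpha(\rho-\frac12))$. I would first evaluate it at a convenient point, $x=1$ or $x=-1$, where the integral becomes a single Beta integral: at $x=1$ only $\mathrm{sgn}=+1$ appears, giving $(1-\beta)\,2^{\,\cdots}B(\alpha-\alpha\rho,1-\alpha\hat\rho)$. I would then simplify using $1/K(\alpha)$, the relation (\ref{rho-beta}) rewritten as $1\pm\beta\tan(\pi\alpha/2)\propto\sin(\pi\alpha\rho)$-type identities, and reflection formulas for $\Gamma$.
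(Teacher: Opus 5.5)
Your proposal is correct. It agrees with the paper on the candidate density, on the reduction to $[-1,1]$, and on the key fact that $x\mapsto\int h(x-y)\mu(dy)$ is constant on $[-1,1]$ (the paper's Lemma~\ref{Stable-lemma2}). Where it genuinely differs is in how $\mu$ is identified with $\mu_{[-1,1]}$.

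\textbf{The paper's route.} It establishes the representation on all of $\R$. In Lemma~\ref{Stable-lemma1} it computes $\int h(x-y)\mu(dy)$ for $|x|\ge1$ and matches it with the explicit invariant function $\varphi_{[-1,1]}=\cos C_1\,K(\alpha,\rho)\,\phi$, which comes from combining Port's and D\"oring--Kyprianou--Weissmann's results (including the change of normalization). Only then does it apply the Fourier uniqueness argument of Theorem~\ref{main1}, which needs the identity everywhere.

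\textbf{Your ``cleaner path''.} It needs the identity only on $B$.
\begin{itemize}
\item Since $\varphi_B=0$ on $B$, $\mu_B$ is also a probability measure on $B$ with constant $h$-potential there. So $\nu=\mu-\mu_B$ has zero mass and $\int\!\!\int h(x-y)\,\nu(dx)\nu(dy)=0$.
\item The continuous version of Lemma~\ref{finite-lem2} gives $\int\!\!\int h(x-y)\,\nu(dx)\nu(dy)=-\frac1\pi\int_0^\infty|\hat\nu(\lambda)|^2\,\Re(1/\Psi(\lambda))\,d\lambda$. Fubini is justified by the bound $M$ of (\ref{hq-bdd}) on the compact set $B-B$.
\item For the stable exponent $\Re(1/\Psi)>0$ away from the origin, so $\nu=0$, and then the two constants agree.
\end{itemize}
This bypasses Lemma~\ref{Stable-lemma1} and all the DKW/Port constants. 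Your evaluation at $x=1$ gives $B(\alpha\hat\rho,1-\alpha\hat\rho)=\pi/\sin(\pi\alpha\hat\rho)$, which cancels against Lemma~\ref{interval-lem} and yields $k([-1,1])$ directly. Your scaling via $r_q(0)=c^{\alpha-1}r_{qc^\alpha}(0)$ and (\ref{phiq}) is a valid substitute for the paper's use of (\ref{P-u}).

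\textbf{What each buys.} The paper's route gives an explicit consistency check with the known invariant function off the interval. Yours gives a general principle: the equilibrium measure is the unique probability on $B$ with constant $h$-potential on $B$. The paper exploits this principle only for finite sets.

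\textbf{Two caveats.}

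First, drop your intermediate ``alternative route'' as written. (\ref{phiq}) is a limit statement, not a uniqueness theorem for invariant functions of the killed process. Moreover, $g(X_{t\wedge T_B})$ is not uniformly integrable: it tends to $g(X_{T_B})=0$ while its mean is $g(x)$. So the growth condition identifies nothing by itself. If you want a probabilistic argument, use translation invariance of (\ref{intvarphi}) instead. It gives $\varphi_B(x)=h(x-y)-\E_x[h(X_{T_B}-y)]$ for every $y$. Integrating in $\mu(dy)$ and using $X_{T_B}\in B$ and $T_B<\infty$ a.s.\ yields $\varphi_B=\int h(\cdot-y)\mu(dy)-k$ on all of $\R$.

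Second, the real computational content is the constancy on $[-1,1]$, not the value of $k$. Citing D\"oring--Kyprianou--Weissmann does not give it. You need the computation the paper carries out:
\begin{itemize}
\item differentiate in $x$;
\item use a M\"obius substitution to turn both one-sided pieces into Beta integrals with the common factor $(1-x)^{\alpha\hat\rho-1}(1+x)^{\alpha\rho-1}$;
\item close with the reflection identity $\sin(\pi\alpha\hat\rho)B(1-\alpha\hat\rho,\alpha-1)=\sin(\pi\alpha\rho)B(\alpha-1,1-\alpha\rho)$, combined with $(1-\beta)/(1+\beta)=\sin(\pi\alpha\hat\rho)/\sin(\pi\alpha\rho)$.
\end{itemize}
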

To prove Proposition \ref{stable-genralinterval}, we first consider the interval $[-1,1]$, and then extend the result to a general interval $[a,b]$ using the scaling property. For that end, we recall results of Port \cite{PStable} and D\"{o}ring-Kyprianou-Weissmann \cite{DKW}. 

The stable processes considered in \cite{PStable} and \cite{DKW} use different normalizations. The normalization constant is $c=1$ in \cite{PStable}, while it is 
\begin{align*}
  c=\cos C_1:=\cos \pi\alpha\left(\rho-\frac{1}{2}\right)
\end{align*}
in \cite{DKW}. In Section 2 of \cite{PStable}, Port shows that the potential measure of the process killed when it hits the interval $[-1,1]$ is absolutely continuous with respect to Lebesgue's measure, with a density that we will denote by $u_{[-1,1]}$ 
\begin{align*}
  u_{[-1,1]}(x,y)dy=\int_0^\infty \P_x(T_{[-1,1]}>t,\ X_t\in dy)dt,\qquad x,y\in\R.
\end{align*}
Moreover, it is proved in Theorem 2 of \cite{PStable} that the following limit holds for any $x\in\R,$
\begin{align}
\label{P-u}
  \lim_{s\to \infty}s^{1-\frac{1}{\alpha}}\P_x(s<T_{[-1,1]})=\frac{1}{C(\alpha,\beta)}\lim_{y\to \infty}u_{[-1,1]}(x,y)=\frac{1}{C(\alpha,\beta)}\varphi_{[-1,1]}(x),
\end{align}
where
\begin{align*}
  C(\alpha,\beta)&=\Gamma\left(1+\frac{1}{\alpha}\right)\cos \left(\frac{1}{\alpha}\tan^{-1}\left(\beta \tan \frac{\pi\alpha}{2}\right)\right)\left(1+\beta^2 \tan^2\frac{\pi\alpha}{2}\right)^{-\frac{1}{2\alpha}}\frac{1}{\sin \frac{\pi}{\alpha}}.
\end{align*} Besides, in Lemma 3.3 of D\"{o}ring-Kyprianou-Weissmann \cite{DKW} it is established that
\begin{align}
\label{DKW-u}
  \lim_{y\to \infty}u_{[-1,1]}^{(\cos C_1)}(x,y)=K(\alpha,\rho)\phi(x),
\end{align}
where\footnote{In D\"{o}ring-Kyprianou-Weissmann \cite{DKW}, the constant $K(\alpha,\rho)$ is defined by the first line of (\ref{const-K}), so the computations from the second line onward are included here for completeness.}
\begin{align*}
\label{const-K}
    K(\alpha,\rho)&:=\frac{2^{1-\alpha}}{\Gamma(\alpha\rho)\Gamma(\alpha\hat{\rho})}\frac{2(1-\alpha\hat{\rho})\Gamma(\alpha\hat{\rho})}{\Gamma(1-\alpha\rho)}\int_1^\infty (v-1)^{\alpha\rho-1}(v+1)^{\alpha\hat{\rho}-2}dv\\
    &=\frac{2^{1-\alpha}}{\Gamma(\alpha\rho)}\frac{2(1-\alpha\hat{\rho})}{\Gamma(1-\alpha\rho)}\int_0^1\left(\frac{2u}{1-u}\right)^{\alpha\rho-1}\left(\frac{2}{1-u}\right)^{\alpha\hat{\rho}-2}\frac{2}{(1-u)^2}du\qquad \left(u=\frac{v-1}{v+1}\right)\\
    &=\frac{1}{\Gamma(\alpha\rho)}\frac{1-\alpha\hat{\rho}}{\Gamma(1-\alpha\rho)}\int_0^1 u^{\alpha\rho -1}(1-u)^{1-\alpha}du\\
     &=\frac{1}{\Gamma(\alpha\rho)}\frac{1-\alpha\hat{\rho}}{\Gamma(1-\alpha\rho)}B(\alpha\rho,2-\alpha)\\
     &=\frac{1}{\Gamma(\alpha\rho)}\frac{1-\alpha\hat{\rho}}{\Gamma(1-\alpha\rho)}\frac{\Gamma(\alpha\rho)\Gamma(2-\alpha)}{\Gamma(2-\alpha\hat{\rho})}\\
     &=\frac{\Gamma(2-\alpha)}{\Gamma(1-\alpha \rho)\Gamma(1-\alpha \hat{\rho})},
        \stepcounter{equation}\tag{\theequation}
\end{align*}
and
\begin{align*}
  \phi(x)&:=\begin{cases}
   \displaystyle \frac{\Gamma(1-\alpha\rho)}{\Gamma(\alpha\hat{\rho})}\int_1^x (z-1)^{\alpha\hat{\rho}-1}(z+1)^{\alpha\rho-1}dz&(x>1),\\
    \displaystyle\frac{\Gamma(1-\alpha\hat{\rho})}{\Gamma(\alpha\rho)}\int_1^{|x|}(z-1)^{\alpha\rho-1}(z+1)^{\alpha\hat{\rho}-1}dz&(x<-1).
  \end{cases}
\end{align*}
Therefore, using (\ref{P-u}) and (\ref{DKW-u}), we obtain 
\begin{align*}
\label{phi-phi}
  \varphi_{[-1,1]}(x)&=\lim_{y\to \infty}u_{[-1,1]}(x,y)\\
  &=\cos C_1 \lim_{y\to \infty}u_{[-1,1]}^{(\cos C_1)}(x,y)\\
  &=\cos C_1\cdot K(\alpha,\rho)\phi(x).
     \stepcounter{equation}\tag{\theequation}
\end{align*}

%To compute $\mu_{[-1,1]}$, we rewrite the parameter $\beta$ which appears in $h(x)$ in terms of $\rho$. 
For the next Lemma, we recall that 
\begin{align*}
  h(x)=\frac{1}{K(\alpha)}(1-\beta \mathrm{sgn}(x))|x|^{\alpha-1},
  \end{align*}
  where
  \begin{align*}
  K(\alpha):=-2\Gamma(\alpha)\cos \frac{\pi\alpha}{2}\left(1+\beta^2\tan^2\left(\frac{\pi\alpha}{2}\right)\right).
\end{align*}

\begin{lem}
\label{interval-lem}
We have
  \begin{align*}
  h(x)=-\frac{\cos C_1}{\Gamma(\alpha)\sin (\pi \alpha)}\times \begin{cases}
    \sin (\pi\alpha\hat{\rho})|x|^{\alpha-1},&x>0,\\
    \sin (\pi \alpha \rho)|x|^{\alpha-1},&x<0.
  \end{cases}
\end{align*}
\end{lem}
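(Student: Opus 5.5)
The proof is a direct trigonometric rewriting of the known formula $h(x)=\frac{1}{K(\alpha)}(1-\beta\,\mathrm{sgn}(x))|x|^{\alpha-1}$. The idea is to express $\beta$ and $K(\alpha)$ through the angle $C_1=\pi\alpha(\rho-\frac12)$. Set $a:=\frac{\pi\alpha}{2}$ and $\theta:=\tan^{-1}\!\big(\beta\tan\frac{\pi\alpha}{2}\big)$. By (\ref{rho-beta}) we have $\rho=\frac12+\frac{\theta}{\pi\alpha}$, so $\theta=\pi\alpha(\rho-\frac12)=C_1$. Since $\alpha\in(1,2)$, $\tan a$ is finite and nonzero, and $\theta\in(-\frac{\pi}{2},\frac{\pi}{2})$, so $\cos\theta>0$. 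These two facts let us write
\begin{align*}
  \beta=\frac{\tan\theta}{\tan a},\qquad 1+\beta^2\tan^2 a=1+\tan^2\theta=\frac{1}{\cos^2\theta}.
\end{align*}

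Substituting into $K(\alpha)=-2\Gamma(\alpha)\cos a\,(1+\beta^2\tan^2 a)$ gives $\frac{1}{K(\alpha)}=-\frac{\cos^2\theta}{2\Gamma(\alpha)\cos a}$. For the sign factor I use the addition formula:
\begin{align*}
  1\mp\beta=1\mp\frac{\tan\theta}{\tan a}=\frac{\sin a\cos\theta\mp\cos a\sin\theta}{\cos\theta\,\sin a}=\frac{\sin(a\mp\theta)}{\cos\theta\,\sin a}.
\end{align*}
Multiplying the two expressions and using $2\sin a\cos a=\sin(\pi\alpha)$ yields
\begin{align*}
  \frac{1\mp\beta}{K(\alpha)}=-\frac{\cos\theta\,\sin(a\mp\theta)}{\Gamma(\alpha)\sin(\pi\alpha)}.
\end{align*}

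It remains to identify the angles. We have $a-\theta=\frac{\pi\alpha}{2}-\pi\alpha\rho+\frac{\pi\alpha}{2}=\pi\alpha(1-\rho)=\pi\alpha\hat\rho$, and $a+\theta=\pi\alpha\rho$. Together with $\cos\theta=\cos C_1$, this gives the claimed formula, with the upper sign for $x>0$ and the lower sign for $x<0$.

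There is no real obstacle here. The only point needing care is bookkeeping: one must confirm that the $K(\alpha)$ recalled just before the lemma uses the same normalization $c=1$ as the characteristic exponent $\Psi$ of Section \ref{S5}, and that the branch of $\tan^{-1}$ is the principal one. The principal branch is what makes $\cos\theta>0$ and keeps the identities above valid without sign changes.
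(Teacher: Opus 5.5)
Your proposal is correct and follows the paper's proof essentially verbatim. The paper also writes $\beta=\tan C_1/\tan C_2$ with $C_2=\pi\alpha/2$, uses $1+\tan^2 C_1=1/\cos^2 C_1$, the subtraction formula giving $\sin(C_2-C_1)=\sin(\pi\alpha\hat\rho)$, and $\sin 2C_2=\sin(\pi\alpha)$; the only cosmetic difference is that you handle both signs at once, whereas the paper treats $x>0$ and appeals to symmetry for $x<0$.
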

\begin{proof}
  By (\ref{rho-beta}), we have
  \begin{align*}
    \beta=\frac{\tan(\pi\alpha(\rho-\frac{1}{2}))}{\tan \frac{\pi\alpha}{2}}=:\frac{\tan C_1}{\tan C_2}.
  \end{align*}
We will only show the result when $x>0$, the proof in the other case follows by symmetry. In this case, $\mathrm{sgn}(x)=1$. Hence, it suffices to observe that
\begin{align*}
  \frac{1-\beta}{K(\alpha)}&=\left(1-\frac{\tan C_1}{\tan C_2}\right)\frac{1}{-2\Gamma(\alpha)\cos C_2(1+\tan^2 C_1)}\\
  &=-\frac{\sin C_2\cos C_1- \cos C_2\sin C_1}{\sin C_2\cos C_1}\cdot \frac{\cos^2 C_1}{2\Gamma(\alpha)\cos C_2}\\
  &=-\frac{\sin (C_2-C_1)}{\sin 2C_2}\cdot \frac{\cos C_1}{\Gamma(\alpha)}\\
  &=-\frac{\cos C_1}{\Gamma(\alpha)\sin (\pi \alpha)}\cdot \sin (\pi\alpha\hat{\rho}).
\end{align*}
\end{proof}

To compute $\mu_{[-1,1]}$ and $k([-1,1])$, we observe that measure in Proposition~\ref{stable-genralinterval} clearly defines a probability measure, that we denote by $\mu$. By the uniqueness of the equilibrium measure and the Robin constant, it is enough to show that
\begin{align}
\label{int-prop-aim}
  \varphi_{[-1,1]}(x)=\int h(x-y)\mu(dy)-\text{a\ constant}.
\end{align}
This will be achieved in the two following lemmas. Since their proofs are based on elementary calculus arguments, they will be postponed to Appendix \ref{App-lem}.

%\begin{lem}
%\label{interval-prop1}
%It holds that
%\begin{align}
%\label{mu-interval}
%  \mu_{[-1,1]}(dy)&=\frac{(1-y)^{-\alpha \rho}(1+y)^{-\alpha \hat{\rho}}}{2^{1-\alpha}B(1-\alpha\hat{\rho},1-\alpha\rho)}\cdot 1_{(-1,1)}(y)dy,\\
%  k([-1,1])&=-\frac{2^{\alpha-1}\pi \cos (\pi\alpha(\rho-\frac{1}{2}))}{\Gamma(\alpha)\sin (\pi\alpha)B(1-\alpha\rho,1-\alpha\hat{\rho})}.
%\end{align}
%\end{lem}

\begin{lem}
\label{Stable-lemma1}
For $x\ge 1$ or $x\le -1$, we have
\begin{align}
\label{Stable-lemma1-eq}
  \int_\R h(x-y)\mu(dy)=\varphi_{[-1,1]}(x)-\frac{\pi \cos C_1}{2^{1-\alpha}B(1-\alpha\hat{\rho},1-\alpha\rho)\Gamma(\alpha)\sin (\pi\alpha)}.
\end{align}
\end{lem}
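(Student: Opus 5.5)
We treat the case $x\ge 1$ in detail. The case $x\le -1$ then follows by the same argument with the roles of $\rho$ and $\hat\rho$ interchanged, i.e. by applying the result to the dual process $-X$. For $x\ge 1$ and $y\in(-1,1)$ we have $x-y>0$, so Lemma \ref{interval-lem} gives $h(x-y)=c_+(x-y)^{\alpha-1}$ with
\begin{align*}
  c_+:=-\frac{\cos C_1\,\sin(\pi\alpha\hat\rho)}{\Gamma(\alpha)\sin(\pi\alpha)}.
\end{align*}
Write $w(y):=(1-y)^{-\alpha\rho}(1+y)^{-\alpha\hat\rho}$ and
\begin{align*}
  F(x):=\int_\R h(x-y)\mu(dy)=\frac{c_+}{2^{1-\alpha}B(1-\alpha\hat\rho,1-\alpha\rho)}\int_{-1}^1 (x-y)^{\alpha-1}w(y)\,dy .
\end{align*}
By \eqref{phi-phi}, for $x>1$ we have $\varphi_{[-1,1]}(x)=\cos C_1\, K(\alpha,\rho)\,\phi(x)$, and $\phi(1)=0$. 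So the plan is: (i) show $F'(x)=\varphi_{[-1,1]}'(x)$ for $x>1$; (ii) check that $F(1)$ equals minus the constant in \eqref{Stable-lemma1-eq}; (iii) conclude by integrating from $1$ to $x$.

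For step (i), differentiation under the integral sign is justified for $x>1$ because the integrand stays away from its singularity. This gives
\begin{align*}
  F'(x)=\frac{c_+(\alpha-1)}{2^{1-\alpha}B(1-\alpha\hat\rho,1-\alpha\rho)}\int_{-1}^1 (x-y)^{\alpha-2}(1-y)^{-\alpha\rho}(1+y)^{-\alpha\hat\rho}\,dy .
\end{align*}
The key observation is that the three exponents sum to $(\alpha-2)-\alpha\rho-\alpha\hat\rho=-2$. Hence we can use the classical identity, proved by the Möbius substitution $u=\frac{(1+y)(x-1)}{(1-y)(x+1)+(1+y)(x-1)}$, which maps $(-1,1)$ onto $(0,1)$:
\begin{align*}
  \int_{-1}^1 (1-y)^{a}(1+y)^{b}(x-y)^{-a-b-2}\,dy=2^{a+b+1}B(a+1,b+1)\,(x-1)^{-b-1}(x+1)^{-a-1},\qquad x>1 .
\end{align*}
With $a=-\alpha\rho$ and $b=-\alpha\hat\rho$ the Beta factors cancel, and we get
\begin{align*}
  F'(x)=c_+(\alpha-1)(x-1)^{\alpha\hat\rho-1}(x+1)^{\alpha\rho-1}.
\end{align*}
On the other side, using the closed form of $K(\alpha,\rho)$ in \eqref{const-K},
\begin{align*}
  \varphi_{[-1,1]}'(x)=\cos C_1\frac{\Gamma(2-\alpha)}{\Gamma(1-\alpha\hat\rho)\Gamma(\alpha\hat\rho)}(x-1)^{\alpha\hat\rho-1}(x+1)^{\alpha\rho-1}.
\end{align*}
The two coefficients agree by two applications of the reflection formula:
\begin{align*}
  \frac{1}{\Gamma(1-\alpha\hat\rho)\Gamma(\alpha\hat\rho)}=\frac{\sin(\pi\alpha\hat\rho)}{\pi},\qquad -\frac{\alpha-1}{\Gamma(\alpha)\sin(\pi\alpha)}=\frac{(1-\alpha)\Gamma(1-\alpha)}{\pi}=\frac{\Gamma(2-\alpha)}{\pi}.
\end{align*}

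For step (ii), at $x=1$ the weight $w$ combines with $(1-y)^{\alpha-1}$, and the integral becomes an elementary Beta integral:
\begin{align*}
  \int_{-1}^1(1-y)^{\alpha\hat\rho-1}(1+y)^{-\alpha\hat\rho}dy=B(\alpha\hat\rho,1-\alpha\hat\rho)=\frac{\pi}{\sin(\pi\alpha\hat\rho)} .
\end{align*}
Therefore
\begin{align*}
  F(1)=-\frac{\pi\cos C_1}{2^{1-\alpha}B(1-\alpha\hat\rho,1-\alpha\rho)\Gamma(\alpha)\sin(\pi\alpha)},
\end{align*}
which is exactly the claimed constant. This expression is symmetric in $\rho\leftrightarrow\hat\rho$, so the case $x\le -1$ produces the same constant.

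For step (iii), $F$ is continuous on $[1,\infty)$ by dominated convergence, since $(x-y)^{\alpha-1}\le 2^{\alpha-1}$ for $x$ near $1$. Likewise $\varphi_{[-1,1]}$ is continuous there, because $(z-1)^{\alpha\hat\rho-1}$ is integrable near $1$. Integrating the identity of step (i) over $(1,x]$ and using $\varphi_{[-1,1]}(1)=0$ gives \eqref{Stable-lemma1-eq}. The main obstacle is step (i): verifying the Möbius-substitution identity carefully, in particular tracking the powers of $2$ and the factors $(x\pm1)$, and then matching the constant coming from $K(\alpha,\rho)$ and the normalization $\cos C_1$. The rest is bookkeeping.
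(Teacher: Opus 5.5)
Your proposal is correct and follows essentially the same route as the paper: differentiate $x\mapsto\int h(x-y)\mu(dy)$ off $[-1,1]$; reduce the derivative to a Beta integral by a M\"obius substitution (your identity with $a=-\alpha\rho$, $b=-\alpha\hat\rho$ is exactly the paper's computation of $I'(x)$); fix the additive constant at the endpoint via $B(\alpha\hat\rho,1-\alpha\hat\rho)=\pi/\sin(\pi\alpha\hat\rho)$; and match the coefficient with $\cos C_1\,K(\alpha,\rho)$ through the reflection formula. The differences are cosmetic: the paper writes out $x<-1$ and leaves $x>1$ as similar, whereas you do the reverse (your duality transfer is valid), and your bound $(x-y)^{\alpha-1}\le 2^{\alpha-1}$ near $x=1$ should read, e.g., $(x-y)^{\alpha-1}\le(x+1)^{\alpha-1}\le 3^{\alpha-1}$ for $x\in[1,2]$, which changes nothing.
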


Note that $x\mapsto \int h(x-y)\mu(dy)$ is continuous on $\R$. To prove (\ref{int-prop-aim}), it remains to show the following:

\begin{lem}
\label{Stable-lemma2}
The function $x\mapsto \int h(x-y)\mu(dy)$ is constant on $[-1,1]$.
\end{lem}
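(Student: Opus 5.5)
We need to show that for $x\in(-1,1)$,
\[
F(x):=\int_{-1}^1 h(x-y)\,\mu(dy)
\]
is constant, where $\mu(dy)=c\,(1-y)^{-\alpha\rho}(1+y)^{-\alpha\hat\rho}dy$ on $(-1,1)$ and, by Lemma \ref{interval-lem},
\[
h(z)=C\bigl(\sin(\pi\alpha\hat\rho)\,z_+^{\alpha-1}+\sin(\pi\alpha\rho)\,z_-^{\alpha-1}\bigr)
\]
for a constant $C$. Up to constants, the task is therefore to show that
\[
F(x)\propto \sin(\pi\alpha\hat\rho)\int_{-1}^x (x-y)^{\alpha-1}w(y)\,dy+\sin(\pi\alpha\rho)\int_x^1 (y-x)^{\alpha-1}w(y)\,dy,
\qquad w(y)=(1-y)^{-\alpha\rho}(1+y)^{-\alpha\hat\rho},
\]
is independent of $x$. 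This is the classical statement that the arcsine-type density is the equilibrium measure for the asymmetric Riesz kernel, i.e. a two-sided fractional integral of order $\alpha$ of $w$ is constant. The constant itself is not needed, since Lemma \ref{Stable-lemma1} together with continuity of $F$ at $x=\pm1$ already identifies it.

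\emph{Approach.} I would differentiate $F$ once and reduce to a Hilbert-transform-type identity. Since $\alpha-1\in(0,1)$ and $w$ is integrable with integrable endpoint singularities ($\alpha\rho<1$, $\alpha\hat\rho<1$), the function $F$ is continuous on $[-1,1]$. For $x\in(-1,1)$,
\[
F'(x)\propto (\alpha-1)\Bigl[\sin(\pi\alpha\hat\rho)\int_{-1}^x (x-y)^{\alpha-2}w(y)\,dy-\sin(\pi\alpha\rho)\int_x^1 (y-x)^{\alpha-2}w(y)\,dy\Bigr].
\]
These integrals are singular at $y=x$ with exponent $\alpha-2\in(-1,0)$, so they converge absolutely. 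It then suffices to show that this bracket vanishes.

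\emph{Key computation.} Use the substitution $y\mapsto$ Möbius variables that map $(-1,x)$ and $(x,1)$ to $(0,\infty)$; concretely, $t=\frac{(x-y)}{(1+y)}\cdot\frac{(1+x)}{?}$-type substitutions of the kind used for Riesz kernels (as in Blumenthal--Getoor--Ray or Port). Each piece then becomes a Beta integral multiplied by a common factor $(1-x)^{\cdot}(1+x)^{\cdot}$. The exponents should combine because $\alpha\rho+\alpha\hat\rho=\alpha$.

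A cleaner route is analytic continuation. Consider
\[
G(z)=\int_{-1}^1 (z-y)^{\alpha-2}w(y)\,dy
\]
on $\mathbb{C}\setminus(-\infty,1]$, with the principal branch. Then $G$ is $O(|z|^{\alpha-2})$ at infinity. Near $\pm1$ its behaviour is dictated by the exponents, and by contour deformation one can identify
\[
G(z)=c'\,(z-1)^{-\alpha\rho}(z+1)^{\alpha\rho-1}\cdot(\text{const}),
\]
using uniqueness of a function with prescribed branch behaviour. Taking boundary values from above and below on $(-1,1)$, the imaginary parts give $\sin$-weighted combinations of the two one-sided integrals. Choosing the right combination, namely rotating by the phase $e^{i\pi\alpha\rho}$, kills the real part exactly with weights $\sin(\pi\alpha\hat\rho)$ and $\sin(\pi\alpha\rho)$.

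\emph{Fallback.} Alternatively, verify directly by expanding both integrals via the substitution $y=x-(1+x)s$ and $y=x+(1-x)s$. This gives hypergeometric functions ${}_2F_1$ in the variable $\frac{1+x}{2}$ or $\frac{1-x}{2}$, and one then applies the connection formula relating ${}_2F_1(\cdot;z)$ and ${}_2F_1(\cdot;1-z)$. The sine factors come from the Gamma reflection formula.

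\emph{Main obstacle.} The main obstacle is this last cancellation: getting the branch phases (respectively the hypergeometric connection coefficients) to produce exactly the weights $\sin(\pi\alpha\hat\rho)$ and $\sin(\pi\alpha\rho)$ coming from $h$. Everything else (differentiation under the integral, continuity, and the endpoint identification via Lemma \ref{Stable-lemma1}) is routine.
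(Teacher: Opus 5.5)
\paragraph{The gap.} Your reduction is the paper's: differentiate on $(-1,1)$, and everything rests on the identity
\[
\sin(\pi\alpha\hat{\rho})\,J_1(x)=\sin(\pi\alpha\rho)\,J_2(x),\qquad J_1(x)=\int_{-1}^x (x-y)^{\alpha-2}w(y)\,dy,\quad J_2(x)=\int_x^1 (y-x)^{\alpha-2}w(y)\,dy,
\]
where $w(y)=(1-y)^{-\alpha\rho}(1+y)^{-\alpha\hat{\rho}}$. This identity is the entire content of the lemma, and your proposal does not establish it. The substitution is left with a question mark, and the hypergeometric fallback is only named.

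The closed form you propose for $G$ is also wrong. You note yourself that $G(z)=O(|z|^{\alpha-2})$, whereas $(z-1)^{-\alpha\rho}(z+1)^{\alpha\rho-1}$ decays like $|z|^{-1}$; that expression is the classical Cauchy-kernel formula, i.e.\ the case $\alpha=1$. Carried through, it gives the wrong weights. Taking the boundary value at $x+i0$ and rotating by $e^{i\pi\alpha\rho}$ yields $\sin(\pi\alpha\rho)J_1+\sin(\pi\alpha(1+\rho))J_2=0$. This is false: for $\rho=\frac12$ and $x=0$, symmetry gives $J_1=J_2>0$, while $\sin\frac{\pi\alpha}{2}+\sin\frac{3\pi\alpha}{2}=2\sin(\pi\alpha)\cos\frac{\pi\alpha}{2}>0$.

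\paragraph{How to close it.} The missing step is short. Put $D=(1+x)u+(1-x)$. The substitution $y=\frac{(1+x)u-(1-x)}{D}$ maps $u\in(0,1)$ onto $(-1,x)$, with
\[
x-y=\frac{(1-x^2)(1-u)}{D},\quad 1-y=\frac{2(1-x)}{D},\quad 1+y=\frac{2(1+x)u}{D},\quad dy=\frac{2(1-x^2)}{D^2}\,du.
\]
All powers of $D$ cancel because $\alpha\rho+\alpha\hat{\rho}=\alpha$. Using $y\mapsto -y$, $\rho\leftrightarrow\hat{\rho}$ for $J_2$, this gives
\[
J_1=2^{1-\alpha}(1-x)^{\alpha\hat{\rho}-1}(1+x)^{\alpha\rho-1}B(1-\alpha\hat{\rho},\alpha-1),\qquad J_2=2^{1-\alpha}(1-x)^{\alpha\hat{\rho}-1}(1+x)^{\alpha\rho-1}B(1-\alpha\rho,\alpha-1).
\]
By the reflection formula,
\[
\sin(\pi\alpha\hat{\rho})B(1-\alpha\hat{\rho},\alpha-1)=\frac{\pi\Gamma(\alpha-1)}{\Gamma(\alpha\rho)\Gamma(\alpha\hat{\rho})}=\sin(\pi\alpha\rho)B(1-\alpha\rho,\alpha-1),
\]
which is exactly the paper's proof.

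Your analytic-continuation route also works once corrected. The analogous substitution for real $z>1$ (the computation in the proof of Lemma \ref{Stable-lemma1}, reflected) gives $G(z)=2^{1-\alpha}B(1-\alpha\rho,1-\alpha\hat{\rho})(z-1)^{\alpha\hat{\rho}-1}(z+1)^{\alpha\rho-1}$. By analytic continuation this holds on all of $\mathbb{C}\setminus(-\infty,1]$. Take the boundary value at $x+i0$; dominated convergence applies since $|x+i\varepsilon-y|\ge|x-y|$. This gives $e^{-i\pi\alpha\hat{\rho}}J_1+e^{i\pi\alpha\rho}J_2\in\mathbb{R}$, and its imaginary part is precisely the required identity. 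In this version the sine weights arise as branch phases, so the reflection formula is not needed.
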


Now, we compute the equilibrium measure and the Robin constant for a general interval $[a,b]$.

\begin{proof}[Proof of Proposition \ref{stable-genralinterval}]
By (\ref{P-u}) and the scaling property, we have that for any $x\in\R,$ the following equality holds
\begin{align*}
\varphi_{[a,b]}(x)&=\lim_{s\to \infty}C(\alpha,\beta)s^{1-\frac{1}{\alpha}}\P_x(s<T_{[a,b]})\\
&=\lim_{s\to \infty}C(\alpha,\beta)s^{1-\frac{1}{\alpha}}\P_{x-\frac{a+b}{2}}\left(s<T_{[-\frac{b-a}{2},\frac{b-a}{2}]}\right)\\
&=\lim_{s\to \infty}C(\alpha,\beta)s^{1-\frac{1}{\alpha}}\P_{\frac{2}{b-a}(x-\frac{a+b}{2})}\left(\left(\frac{b-a}{2}\right)^{-\alpha}s<T_{[-1,1]}\right)\\
&=\lim_{s\to \infty}C(\alpha,\beta)\left(\frac{b-a}{2}\right)^{\alpha-1}s^{1-\frac{1}{\alpha}}\P_{\frac{2}{b-a}(x-\frac{a+b}{2})}\left(s<T_{[-1,1]}\right)\\
&=\left(\frac{b-a}{2}\right)^{\alpha-1}\varphi_{[-1,1]}\left(\frac{2}{b-a}x-\frac{a+b}{b-a}\right).
\end{align*}
Thus, by Lemmas~\ref{Stable-lemma1} and \ref{Stable-lemma2}, we have for any $x\in\R$
\begin{align*}
\varphi_{[a,b]}(x)&=\left(\frac{b-a}{2}\right)^{\alpha-1}\int_{[-1,1]}h\left(\frac{2}{b-a}x-\frac{a+b}{b-a}-y\right)\mu_{[-1,1]}(dy)\\
&\qquad -\left(\frac{b-a}{2}\right)^{\alpha-1}k([-1,1]).
\end{align*}
Making the change of variable $y=\frac{2z-a-b}{b-a}$, we have for any $x\in\R$
\begin{align*}
&\int_{[-1,1]}h\left(\frac{2}{b-a}x-\frac{a+b}{b-a}-y\right)\mu_{[-1,1]}(dy)\\
&\qquad =\int_{[-1,1]}h\left(\frac{2}{b-a}x-\frac{a+b}{b-a}-y\right)\frac{(1-y)^{-\alpha\rho}(1+y)^{-\alpha\hat{\rho}}}{2^{1-\alpha}B(1-\alpha\hat{\rho},1-\alpha\rho)}dy\\
&\qquad =\int_{[a,b]}h\left(\frac{2}{b-a}(x-z)\right)\frac{(\frac{2}{b-a}(b-z))^{-\alpha\rho}(\frac{2}{b-a}(z-a))^{-\alpha\hat{\rho}}}{2^{1-\alpha}B(1-\alpha\hat{\rho},1-\alpha\rho)}\frac{2}{b-a}dz\\
&\qquad =2^{\alpha-1}\int_{[a,b]}h(x-z)\frac{(b-z)^{-\alpha\rho}(z-a)^{-\alpha\hat{\rho}}}{B(1-\alpha\hat{\rho},1-\alpha\rho)}dz.
\end{align*}
Thus, we conclude that for any $x\in\R$
\begin{align*}
  \varphi_{[a,b]}(x)&=\int_{[a,b]}h(x-z)\cdot \frac{(b-a)^{\alpha-1}}{B(1-\alpha\hat{\rho},1-\alpha\rho)}(b-z)^{-\alpha\rho}(z-a)^{-\alpha\hat{\rho}}dz\\
  &\qquad -\left(\frac{b-a}{2}\right)^{\alpha-1}k([-1,1]).
\end{align*}
Therefore, the assertion follows from the uniqueness established in Theorem \ref{main1}.
\end{proof}

%\begin{Rem}
%Letting $a=0$ and $b=1$, we have
%\begin{align}
%\mu_{[0,1]}(dy)&=\frac{1}{B(1-\alpha\hat{\rho},1-\alpha\rho)}(1-y)^{-\alpha\rho}y^{-\alpha\hat{\rho}}1_{(0,1)}dy.
%\end{align}
%Thus, $\mu_{[0,1]}$ has a Beta distribution with parameters $1-\alpha{\rho}$ and $1-\alpha\hat{\rho}$.
%\end{Rem}

\subsubsection{Representation of the Martingale Part}
\label{SS5.2}
In this subsection, we use results of Tsukada \cite{T2} to derive a representation of the martingale part in Tanaka's formula for an interval. In this subsection, the normalization constant plays no essential role. Hence, it will be omitted from the notation hereafter.

We first recall the following result of Tsukada \cite{T2}. 
\begin{prop}[Theorem 3.1 of \cite{T2}]
For all $a\in \R$ and $t\ge 0$, it holds that
\begin{align}
\label{Tsu-Tanaka}
  h(X_t-a)=h(x-a)+M_t^a+L_t^a,
\end{align}
where
\begin{align*}
  M_t^a:=\int_0^t \int_{\R\setminus \{0\}}\Big(h(X_{s-}-a+y)-h(X_{s-}-a)\Big)\tilde{N}(ds,dy)
\end{align*}
is an $L^2$-martingale. Here, $\tilde{N}$ denotes the compensated Poisson random measure.
\end{prop}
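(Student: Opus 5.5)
The plan is to specialize the general Tanaka decomposition for an additive functional to the case $\nu=\delta_a$, identify the martingale explicitly via the Lévy–Itô decomposition, and then verify square integrability.

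\emph{Step 1: the decomposition.} Take $A=L^a$, whose Revuz measure is $\delta_a$. Then Theorem~\ref{main2}, or equivalently the limit $q\to0$ in Lemma~\ref{main2-lem1}, gives
\begin{align*}
h(X_t-a)=h(x-a)+M^{\delta_a}_t+L^a_t
\end{align*}
with $M^{\delta_a}$ a martingale. It therefore remains to identify $M^{\delta_a}$ with the compensated jump integral $M^a_t$ and to show that it lies in $L^2$.

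\emph{Step 2: identifying the martingale at level $q$.} For the stable process with $\alpha\in(1,2)$ there is no Gaussian part, and the drift can be absorbed into the compensator. I would write $h_q(\cdot-a)=r_q(0)-r_q(a-\cdot)$ and use that $u=r_q(a-\cdot)$ solves $(q-\mathcal{L})u=\delta_a$ in the distributional sense. The process $h_q(X_t-a)$ is a semimartingale, and by Lemma~\ref{main2-lem1} it equals $h_q(x-a)+M^{q}_t+q\int_0^t u(X_s)\,ds+L^a_t$. Comparing with an Itô-type (Meyer) formula for the potential $u$ of a continuous additive functional, via the martingale representation for Lévy processes, identifies the purely discontinuous martingale $M^q$ as
\begin{align*}
M^q_t=\int_0^t\int_{\R\setminus\{0\}}\Big(h_q(X_{s-}-a+y)-h_q(X_{s-}-a)\Big)\tilde N(ds,dy).
\end{align*}
This uses that the process has no continuous martingale part, so every martingale is a stochastic integral against $\tilde N$.

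\emph{Step 3: passing to the limit.} The limit rests on the isometry
\begin{align*}
\E_x\big[(M^q_t-M^a_t)^2\big]=\E_x\int_0^t\int |\Delta_y(h_q-h)(X_s-a)|^2\,\Pi(dy)\,ds,
\end{align*}
where $\Delta_y f(z)=f(z+y)-f(z)$. Since $h_q\to h$ uniformly on compacts, the integrand converges to $0$ pointwise. For domination, I would use that $h_q$ is uniformly Hölder of order $\alpha-1$: by the bound~\eqref{hq-bdd} and subadditivity, $|h_q(z+y)-h_q(z)|\le M(y)+M(-y)\le C|y|^{\alpha-1}$, and since $|\Delta_y f|\le 2\sup f$ the same quantity is also at most a linear-growth function of $|y|$. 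Because $\int(|y|^{2\alpha-2}\wedge |y|^2)|y|^{-1-\alpha}dy$ converges near $0$ (as $2\alpha-2-1-\alpha=\alpha-3>-2$ requires $\alpha>1$) and at infinity (as $2\alpha-2-\alpha-1=\alpha-3<-1$ requires $\alpha<2$), dominated convergence applies. This gives $L^2$ convergence of $M^q$ to $M^a$, and simultaneously shows that $M^a$ is an $L^2$-martingale. Combined with Step 1, this yields the claimed formula.

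\emph{Main obstacle.} The hardest step is Step 2, justifying the jump-integral form of $M^q$, because $h_q$ is not $C^2$ and the standard Itô formula does not apply directly. My first attempt would mollify: set $h_q^\e=h_q*\rho_\e$, apply Itô's formula to $h_q^\e$, use $\mathcal{L}h_q^\e=q(r_q*\rho_\e)(a-\cdot)-\rho_\e(\cdot-a)$, and then let $\e\to0$. In that limit the term $\int_0^t\rho_\e(X_s-a)\,ds$ converges to $L^a_t$ by the occupation density formula, and the martingale terms converge by the same $L^2$ domination as in Step 3.
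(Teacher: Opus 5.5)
The paper does not prove this statement; it imports it as Theorem 3.1 of Tsukada \cite{T2}. Your route is a reasonable self-contained alternative: apply Lemma~\ref{main2-lem1} at level $q$, identify $M^q$ as a compensated jump integral, then let $q\to0$.

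Step 2 is fine once you make it precise by martingale representation plus matching of jumps. Since $L^a$ and $\int_0^t r_q(a-X_s)\,ds$ are continuous, $\Delta M^q_s=h_q(X_s-a)-h_q(X_{s-}-a)$. An $L^2$ martingale $\int\!\int F\,\tilde N$ with these jumps must then have $F(s,y)=h_q(X_{s-}-a+y)-h_q(X_{s-}-a)$ a.e. With this you do not need the mollification. Watch the sign there: $\mathcal{L}h_q(\cdot-a)=\delta_a-q\,r_q(a-\cdot)$. With your sign the local time would enter with a minus.

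The genuine gap is the domination in Step 3, which is the analytic heart of the claim that $M^a$ is an $L^2$-martingale.

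\begin{itemize}
\item The dominating function $C(|y|^{2\alpha-2}\wedge|y|^2)$ requires $|\Delta_y(h_q-h)(z)|\le C|y|$ uniformly in $z$ and $q$, i.e.\ a uniform Lipschitz bound.
\item ``$|\Delta_y f|\le 2\sup f$'' cannot give this. The function $h_q-h$ is unbounded, and a sup bound would give a constant, not $C|y|$.
\item The bound cannot come from $h_q$ and $h$ separately either, since $h(y)-h(0)\asymp|y|^{\alpha-1}$.
\item Without it you only have $C|y|^{2\alpha-2}$. Then $\int_{|y|<1}|y|^{2\alpha-2}\Pi(dy)\asymp\int_0^1 y^{\alpha-3}\,dy=\infty$ for $\alpha<2$. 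Integrability at $0$ needs exponent $>-1$, not $>-2$.
\end{itemize}

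So as written, neither the $L^2$ convergence $M^q\to M^a$ nor the $L^2$ property of $M^a$ is established. In your mollification the analogous estimate is actually false. Near the cusp at $0$, $h_q*\rho_\e-h_q$ has increments of order $\e^{\alpha-1}$ over distances $\e$, so it is not uniformly Lipschitz in $\e$.

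Any one of the following closes the gap.

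\begin{itemize}
\item[(i)] The Lipschitz bound for the difference does hold, by cancellation in \eqref{hq-repre}:
\[
|\Delta_y(h_q-h)(z)|\le \frac{q|y|}{\pi}\int_0^\infty\frac{\lambda\,d\lambda}{|\Psi(\lambda)|\,|q+\Psi(\lambda)|}\le C\,q^{2/\alpha-1}|y| .
\]
Combined with your H\"older bound, this gives the domination. The $L^2$ property of $M^a$ then follows because $\Delta_y h_{q_0}(X_{s-}-a)\in L^2$ for a fixed $q_0$.
\item[(ii)] Avoid domination altogether. First, $h_q(X_t-a)\to h(X_t-a)$ in $L^2(\P_x)$, because $h_q(z)\le M(z)=C|z|^{\alpha-1}$ and $\E_x|X_t|^{2\alpha-2}<\infty$. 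Second, $q\int_0^t r_q(a-X_s)\,ds\le qt\,r_q(0)\to0$. Hence $(M^q_t)$ is Cauchy in $L^2$. The isometry then makes the integrands Cauchy in $L^2(\P_x\otimes ds\otimes\Pi)$, and pointwise convergence identifies the limit as $\Delta_y h(X_{s-}-a)$.
\item[(iii)] Use Tsukada's estimate. By scaling, $\int|\Delta_y h(z)|^2\Pi(dy)\le C|z|^{\alpha-2}$. Moreover $\E_x|X_s-a|^{\alpha-2}\le C(1+s^{-1/\alpha})$, which is integrable on $[0,t]$. This is the content of the bound (3.13) of \cite{T2} that the paper reuses later.
\end{itemize}
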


By extending this theorem, we give a representation of the martingale in Tanaka's formula for an interval.

\begin{prop}
\label{main-martingale}
Let $B$ be a bounded closed interval. It holds that
\begin{align*}
\varphi_B(X_t)=\varphi_B(x)+\int_0^t\int_{\R\setminus \{0\}}\Big(\varphi_B(X_{s-}+y)-\varphi_B(X_{s-})\Big)\tilde{N}(ds,dy)+\int_B L_t^a \mu_B(da)
\end{align*}
for every $t$.
\end{prop}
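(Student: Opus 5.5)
The plan is to integrate Tsukada's formula \eqref{Tsu-Tanaka} against the equilibrium measure $\mu_B$ and then use Theorem \ref{main1}. By Proposition \ref{stable-genralinterval}, $\mu_B$ is a probability measure on $B$ with an explicit Beta-type density. Integrating \eqref{Tsu-Tanaka} in $a$ with respect to $\mu_B(da)$ and subtracting $k(B)$ gives, via \eqref{main1-eq},
\begin{align*}
\varphi_B(X_t)=\varphi_B(x)+\int_B M_t^a\,\mu_B(da)+\int_B L_t^a\,\mu_B(da).
\end{align*}
It therefore suffices to show that, almost surely,
\begin{align*}
\int_B M_t^a\,\mu_B(da)=\int_0^t\int_{\R\setminus\{0\}}\Big(\varphi_B(X_{s-}+y)-\varphi_B(X_{s-})\Big)\tilde N(ds,dy).
\end{align*}
Note that the integrand on the right equals $\int_B \big(h(X_{s-}-a+y)-h(X_{s-}-a)\big)\mu_B(da)$, again by \eqref{main1-eq}, since the constant $k(B)$ cancels in the difference.

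The key step is a stochastic Fubini theorem for compensated Poisson integrals, in the form valid for $L^2$ integrands. We need the measurability of $(a,\omega,s,y)\mapsto H_a(s,y):=h(X_{s-}-a+y)-h(X_{s-}-a)$, which follows from the continuity of $h$. We also need the integrability condition
\begin{align*}
\int_B\Big(\E_x\int_0^t\int |H_a(s,y)|^2\,\Pi(dy)\,ds\Big)^{1/2}\mu_B(da)<\infty.
\end{align*}
Given these, $\int_B M_t^a\,\mu_B(da)$ coincides with the stochastic integral of $\int_B H_a\,\mu_B(da)$.

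For the bound, use the explicit form $h(z)=c_\pm|z|^{\alpha-1}$ and the fact that $\alpha-1\in(0,1)$, so that $h$ is $(\alpha-1)$-Hölder: $|h(z+y)-h(z)|\le C|y|^{\alpha-1}$. Moreover, for $|y|\ge1$ and bounded $z$, the increment is at most $C(|y|^{\alpha-1}+|z|^{\alpha-2}|y|)$ or similar. A cleaner route combines the Hölder bound for small $|y|$ with the bound $C(|y|^{\alpha-1})$ for all $y$. This gives $\int |H_a|^2\,\Pi(dy)\le C\int (|y|^{2\alpha-2}\wedge\ldots)\,\Pi(dy)$. Near $0$, the term $|y|^{2\alpha-2}|y|^{-1-\alpha}=|y|^{\alpha-3}$ is not integrable, so the Hölder bound alone fails near $0$. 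This is the main obstacle.

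For small jumps I would instead use the local smoothness of $h$ away from $0$: $|h(z+y)-h(z)|\le C|z|^{\alpha-2}|y|$ for $|y|<|z|/2$. I would handle the set where $X_{s-}$ is near $a$ by exploiting the fact that Tsukada already proved $M^a$ is an $L^2$-martingale with $\E_x[(M_t^a)^2]=\E_x\int_0^t\int H_a^2\,\Pi(dy)\,ds$. Moreover, by translation invariance this quantity is bounded uniformly for $a$ in the compact set $B$ and $x$ fixed; I would verify this from Tsukada's proof, where the bound depends only on the occupation density, which is bounded by $r_q$ up to $e^{qt}$. Concretely, the quantity equals $\int_0^t\int p_s(x,z)\,g(z-a)\,dz\,ds$ with $g(w)=\int (h(w+y)-h(w))^2\,\Pi(dy)=C|w|^{2\alpha-2-\alpha}=C|w|^{\alpha-2}$ by scaling. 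Since $\alpha-2>-1$, $g$ is locally integrable, and the occupation density is bounded by $e^{t}r_1$. Hence the $L^2$ norm of $M_t^a$ is bounded uniformly in $a$, and the Fubini condition holds because $\mu_B$ is a probability measure.

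Finally, the stochastic Fubini theorem identifies the integrated martingale with the compensated integral of $\int_B H_a\,\mu_B(da)$, which is the stated $\varphi_B$-increment. Hence the formula holds almost surely for each $t$. Both sides are càdlàg, so the identity holds for every $t$ simultaneously, which completes the plan.
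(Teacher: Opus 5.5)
Your route is correct but differs from the paper's at the key step. The paper does not invoke a stochastic Fubini theorem; it even remarks that the spatial interchange is not covered by classical results. Instead, it proceeds in four steps:
\begin{enumerate}
\item it approximates $\mu_B$ by Riemann sums $S^{(n)}_t=\sum_i M^{\xi_i^{(n)}}_t\mu_B(I_i^{(n)})$;
\item it proves that $a\mapsto H(\cdot;a,\cdot)$ is continuous into $L^2(\P_x\otimes ds\otimes\Pi)$, using the It\^o isometry applied to Tsukada's formula, Boylan's H\"older estimate for $a\mapsto L^a_T$, and moment bounds on $X_T$;
\item it deduces that $S^{(n)}\to M^B$ in $\mathscr{M}^2_T$, and hence, by Doob's inequality, uniformly on $[0,T]$ along a subsequence;
\item it passes to the limit pathwise in the Riemann-sum version of Tsukada's formula, using uniform continuity of $h$ and joint continuity of local times.
\end{enumerate}

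Your approach needs only joint measurability and a uniform-in-$a$ bound on $\E_x\int_0^t\int H_a^2\,\Pi(dy)\,ds$. Your scaling identity $g(w)=C_\pm|w|^{\alpha-2}$ is a clean, self-contained replacement for the paper's citation of (3.13) in Tsukada. It also shows that the interchange is soft in the $L^2$ setting. The compensated integral is an isometry $L^2(\P_x\otimes ds\otimes\Pi)\to L^2(\P_x)$, so it commutes with Bochner integrals. Consequently you need neither continuity in $a$ nor any local-time H\"older estimate.

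The price is that you should write this Fubini step out rather than merely cite it. In particular, you must identify the $L^2(\P_x)$-valued Bochner integral of $a\mapsto M^a_t$ with the pathwise integral $\int_B M^a_t\,\mu_B(da)$ that appears after integrating Tsukada's formula. This works if you take the jointly measurable version $M^a_t=h(X_t-a)-h(x-a)-L^a_t$ (with jointly continuous local times) and test against $\xi\in L^2(\P_x)$ using the ordinary Fubini theorem.

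Two small points to tidy:
\begin{itemize}
\item $g$ is locally integrable but not integrable at infinity, so local integrability alone does not suffice. Bound $\E_x\int_0^t g(X_s-a)\,ds$ by $e^t r_1(0)\int_{|w|\le 1}g(w)\,dw+t\sup_{|w|>1}g(w)$ instead.
\item The paragraph attempting H\"older bounds for small jumps can be dropped, since the scaling computation supersedes it.
\end{itemize}

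Conversely, the paper's route delivers the identity uniformly in $t\in[0,T]$ directly. Yours gives it for each fixed $t$ and then needs the c\`adl\`ag extension you mention.
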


Intuitively, one may expect that (\ref{main2-eq2}) can be obtained by integrating (\ref{Tsu-Tanaka}) with respect to $\mu_B$, which requires interchanging the integration order, and hence of a version of the stochastic Fubini's theorem. But because the required integration is in the space variable, this does not follow from classical results. In what follows we provide a justification. 

For that end, we define the space-time process
\begin{align*}
  H(s;a,y):=h(X_{s-}-a+y)-h(X_{s-}-a), \qquad s\geq 0, a, y\in\R
\end{align*}
Since the process $X_{s-}$ is predictable and $h$ is continuous, $H$ is $\mathscr{P}\otimes \B(B)\otimes \B(\R)$-measurable, where $\mathscr{P}$ denotes the predictable $\sigma$-algebra on $\R_+\times \Omega$. We define 
\begin{align*}
  \overline{H}(s;y):=\int_B H(s;a,y)\mu_B(da), \qquad s\geq 0, a\in\R.
\end{align*}
Then the following martingale can be constructed.

\begin{lem}
The stochastic process
\begin{align*}
  M_t^B:=\int_0^t \int_{\R\setminus \{0\}} \overline{H}(s;y)\tilde{N}(ds,dy)
\end{align*}
is well-defined. Moreover, for any fixed $T>0$, $(M_t^B)_{0\le t\le T}$ is an $L^2$-martingale.
\end{lem}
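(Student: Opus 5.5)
The plan is to verify the standard $L^2$ criterion for stochastic integrals against a compensated Poisson random measure. The integrand $\overline{H}$ must be $\mathscr{P}\otimes\B(\R)$-measurable, and for each fixed $T$ it must satisfy
\begin{align*}
  \E_x\left[\int_0^T\int_{\R\setminus\{0\}}\overline{H}(s;y)^2\,\Pi(dy)\,ds\right]<\infty .
\end{align*}
Granting these two facts, the integral $M^B$ is well defined. It is also a square-integrable martingale on $[0,T]$, by the \ito\ isometry and the classical construction of integrals with respect to $\tilde N$.

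Measurability comes first. We already know that $H$ is $\mathscr{P}\otimes\B(B)\otimes\B(\R)$-measurable. Since $\mu_B$ is a finite measure, Fubini's theorem for nonnegative and integrable parts shows that $\overline{H}$ is $\mathscr{P}\otimes\B(\R)$-measurable. Here we use that $|H|$ is locally bounded in $a$, because $h$ is continuous and $B$ is compact, so $\overline{H}$ is finite everywhere.

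The integrability step reduces the averaged integrand to the single-point integrands. By Cauchy–Schwarz (Jensen) and $\mu_B(B)=1$,
\begin{align*}
  \overline{H}(s;y)^2\le \int_B H(s;a,y)^2\,\mu_B(da).
\end{align*}
Tonelli's theorem then gives
\begin{align*}
  \E_x\int_0^T\int \overline{H}^2\,\Pi(dy)\,ds\le \int_B \E_x\left[\int_0^T\int H(s;a,y)^2\,\Pi(dy)\,ds\right]\mu_B(da).
\end{align*}
The inner expectation is exactly $\E_x[(M_T^a)^2]$, where $M^a$ is the $L^2$-martingale of Tsukada's Theorem 3.1 (\eqref{Tsu-Tanaka}). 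What remains is to show that $a\mapsto \E_x[(M_T^a)^2]$ is bounded on the compact set $B$.

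This uniform bound is the main obstacle. I would get it from \eqref{Tsu-Tanaka} itself:
\begin{align*}
  M_T^a=h(X_T-a)-h(x-a)-L_T^a ,
\end{align*}
so that
\begin{align*}
  \E_x[(M_T^a)^2]\le 3\left(\E_x[h(X_T-a)^2]+h(x-a)^2+\E_x[(L_T^a)^2]\right).
\end{align*}
Each of the three terms is handled separately.
\begin{enumerate}[(i)]
  \item Under \eqref{ConditionA} in the stable case, $h(z)\le C|z|^{\alpha-1}$ with $\alpha-1<1$. Since $\alpha>1$, $X_T$ has finite moments of order below $\alpha$, and $2(\alpha-1)<\alpha$, so $\E_x[h(X_T-a)^2]\le C\,\E_x[1+|X_T|^2{}^{\wedge}\cdots]$ is bounded uniformly in $a\in B$. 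More precisely, $\E_x|X_T-a|^{2\alpha-2}$ is finite and continuous in $a$.
  \item The term $h(x-a)^2$ is bounded on $B$ because $h$ is continuous.
  \item For the local time, the bound $\E_x[(L_T^a)^2]\le 2\left(e^{qT}r_q(a-x)\right)\cdot\sup_z e^{qT} r_q(z)\le 2e^{2qT}r_q(0)^2$ is uniform in $a$. It follows from the Markov property identity $\E_x[(L^a_\infty{}_{,q})^2]=2\E_x\int e^{-qt}\,dL_t^a\,\E_{a}[\int e^{-qs}dL^a_s]$ together with $\int_0^T dL\le e^{qT}\int_0^T e^{-qt}dL_t$.
\end{enumerate}
If one prefers to avoid relying on the stable structure in (i), a general alternative is the moment bound used in the proof of Lemma~\ref{main2-lem2}, namely $\E_x[h(X_t-y)]\le C_2y^2+C_1|y|+C_0$, adapted to second moments. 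Combining the three bounds gives $\sup_{a\in B}\E_x[(M_T^a)^2]<\infty$, hence the finiteness of the displayed integral, and therefore the well-definedness and $L^2$-martingale property of $(M^B_t)_{0\le t\le T}$.
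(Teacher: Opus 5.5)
Your proof is correct, but it gets the key bound, uniform in $a\in B$, by a different route from the paper.

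\textbf{Comparison with the paper.} The paper does the same Jensen/Tonelli reduction. It then cites estimate (3.13) of Tsukada \cite{T2}, which bounds $\E_x[\int_0^T\int H(s;a,y)^2\Pi(dy)ds]$ by $C(\alpha)T^{(2\alpha-\e_0-2)/\alpha}+C'(\alpha)T$ with constants free of $a$, and integrates this bound against $\mu_B$.

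You instead use the \ito\ isometry to rewrite the inner quantity as $\E_x[(M_T^a)^2]$. You then bound it through the Tanaka identity \eqref{Tsu-Tanaka}: $\E_x|X_T-a|^{2\alpha-2}$ is controlled by stable moments (since $2\alpha-2<\alpha$), and $\E_x[(L_T^a)^2]$ by a resolvent bound. This is exactly the device the paper uses later in the proof of Lemma \ref{L2-conti}, applied there to $M^{a_n}-M^a$ rather than to $M^a$.

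The paper's route is shorter but depends on an external technical estimate, and takes the uniformity in $a$ from that source. Yours is self-contained once Theorem 3.1 of \cite{T2} is granted; that theorem is still needed so that each $M^a$ is an $L^2$-martingale and the isometry applies. Your route also makes the uniformity over $a\in B$ transparent.

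\textbf{Two minor slips, neither affecting the conclusion.}
\begin{enumerate}
\item In (iii), the second-moment identity is off by a factor in the exponent. The correct computation gives $\E_x[(\int_0^\infty e^{-qt}dL^a_t)^2]=2r_q(0)\,\E_x[\int_0^\infty e^{-2qt}dL^a_t]=2r_q(0)\,r_{2q}(a-x)$. This is still at most $2r_q(0)^2$, so your bound $\E_x[(L_T^a)^2]\le 2e^{2qT}r_q(0)^2$ stands.
\item The closing ``general alternative'' is not justified as stated, since a first-moment bound on $h(X_t-y)$ does not automatically give a second-moment bound. It is unnecessary here, because this subsection concerns recurrent stable processes and your stable-case bound (i) covers it.
\end{enumerate}
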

\begin{proof}
It suffices to show $\overline{H}\in L^2(\P_x(d\omega)\otimes ds|_{[0,T]}\otimes \Pi(dy))$, where $\Pi$ denotes the \Levy\ measure. By Jensen's inequality and (3.13) of Tsukada \cite{T2}, there exist positive constants $C(\alpha)$ and $C'(\alpha)$ such that
\begin{align*}
  &\E_x\left[\int_0^T\int_{\R\setminus \{0\}}\overline{H}^2(s;y)\Pi(dy)ds\right]\\
  &\qquad \le \int_B\E_x\left[\int_0^T\int_{\R\setminus \{0\}}{H}^2(s;a,y)\Pi(dy)ds\right]\mu_B(da)\\
  &\qquad \le \int_B \Big(C(\alpha)T^{\frac{2\alpha-\e_0-2}{\alpha}}+ C'(\alpha)T\Big)\mu_B(da)\\
  &\qquad =C(\alpha)T^{\frac{2\alpha-\e_0-2}{\alpha}}+ C'(\alpha)T<\infty,
  \stepcounter{equation}\tag{\theequation} 
\end{align*}
where $0<\e_0<(\alpha-1)\wedge (2-\alpha).$ Therefore, $M^B$ is well-defined and its $L^2$-integrability is established. The fact that it is a martingale follows from standard results for \Levy\ processes, see, e.g., Theorem 4.2.3 of \cite{App}. This completes the proof.
\end{proof}

Since $B$ is an interval, it follows from the previous subsection that $\mu_B$ has a density function. Next, we consider a Riemann–Stieltjes-type approximation. Let $B=[c,d]$ and let
\begin{align}
  \pi^{(n)}:\ c=x_0^{(n)}<x_1^{(n)}<\cdots <x_n^{(n)}=d,
\end{align}
be a finite partition of $B$. We denote $I_i^{(n)}:=[x_{i-1}^{(n)},x_i^{(n)})$. We assume that the mesh $\|\pi^{(n)}\|:=\max |I_i^{(n)}|$ converges to zero. For each $i$, we choose a representative point $\xi_i^{(n)}\in I_i^{(n)}$. We define
\begin{align}
S_t^{(n)}:=\sum_{i=1}^nM_t^{\xi_i^{(n)}}\mu_B(I_i^{(n)}).
\end{align}
Clearly, $(S_t^{(n)})_{0\le t\le T}$ is an $L^2$-martingale. To study the convergence of $(S^{(n)})$, we consider the following Lemma.

\begin{lem}
\label{L2-conti}
 The mapping $a\mapsto H(\cdot;a,\cdot)$ is continuous from $B$ into $L^2(\P_x(d\omega)\otimes ds|_{[0,T]}\otimes \Pi(dy))$.
\end{lem}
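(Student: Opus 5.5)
We need to show that, for $a_n\to a$ in $B$,
\begin{align*}
\E_x\left[\int_0^T\int_{\R\setminus\{0\}}\Big(H(s;a_n,y)-H(s;a,y)\Big)^2\Pi(dy)ds\right]\to 0 .
\end{align*}
The plan is a dominated-convergence argument in the product space, with an $L^2$ bound uniform in $a\in B$ as the key input.

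\textbf{Step 1 (pointwise convergence).} Since $h$ is continuous (indeed $h(x)=c_\pm|x|^{\alpha-1}$ by Lemma \ref{interval-lem}), the integrand converges to zero for every $(\omega,s,y)$. Indeed,
\begin{align*}
H(s;a_n,y)-H(s;a,y)&=\Big(h(X_{s-}-a_n+y)-h(X_{s-}-a+y)\Big)\\
&\qquad-\Big(h(X_{s-}-a_n)-h(X_{s-}-a)\Big),
\end{align*}
and both brackets tend to $0$ as $a_n\to a$.

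\textbf{Step 2 (uniform integrability).} Rather than seek a single dominating function, we aim for uniform integrability of the family $\{H(\cdot;a,\cdot)^2: a\in B\}$ with respect to $\P_x\otimes ds|_{[0,T]}\otimes\Pi$. Pointwise convergence together with uniform integrability (Vitali's theorem) gives $L^2$ convergence, and hence continuity. To get uniform integrability, split the L\'evy measure into small jumps $|y|\le\delta$ and large jumps $|y|>\delta$.

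\emph{Large jumps.} We use the H\"older continuity of $h$ of order $\alpha-1$:
\begin{align*}
|h(z+y)-h(z)|\le C|y|^{\alpha-1}.
\end{align*}
This holds because $h(x)=c_\pm|x|^{\alpha-1}$ with $\alpha-1\in(0,1)$; across zero it follows from $|x|^{\alpha-1}$ being $(\alpha-1)$-H\"older together with the triangle inequality. Hence $|H(s;a,y)|\le C|y|^{\alpha-1}$ uniformly in $a$. Since
\begin{align*}
\int_{|y|>1}|y|^{2\alpha-2}\,\Pi(dy)<\infty
\end{align*}
(because $2\alpha-2<\alpha$), this yields a dominating function for $|y|>1$, independent of $a$ and $\omega$. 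On the whole space, however, $\int_{|y|\le1}|y|^{2\alpha-2}\Pi(dy)$ diverges, since $2\alpha-2<\alpha$ near zero as well. So small jumps need separate care.

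\emph{Small jumps.} For $|z|$ away from $0$, the function $h$ is smooth with $|h'(z)|\le C|z|^{\alpha-2}$, so $|h(z+y)-h(z)|\le C|y|\,|z|^{\alpha-2}$ when $|y|\le|z|/2$. This is exactly where estimate (3.13) of Tsukada \cite{T2} enters. I would revisit its proof and record that the bound
\begin{align*}
\E_x\left[\int_0^T\int_{|y|\le\delta}H^2(s;a,y)\,\Pi(dy)ds\right]\le \eta(\delta)
\end{align*}
holds with $\eta(\delta)\to0$ as $\delta\to0$, uniformly for $a$ in the compact $B$. The mechanism is translation: the law of $X_{s}-a$ under $\P_x$ is that of $X_s$ under $\P_{x-a}$, and the constants in Tsukada's estimate depend continuously (hence boundedly) on the starting point. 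Concretely, one splits according to whether $|X_{s-}-a|\le 2|y|$ or not. This produces terms like
\begin{align*}
\int_{|y|\le\delta}|y|^{2\alpha-2}\,\P_x\big(|X_s-a|\le 2|y|\big)\,\Pi(dy)
\end{align*}
and
\begin{align*}
\int_{|y|\le\delta}|y|^{2}\,\E_x\Big[|X_s-a|^{2\alpha-4}\,;\,|X_s-a|>2|y|\Big]\,\Pi(dy).
\end{align*}
Both are controlled by the bounded density of $X_s$ (bounded by $C s^{-1/\alpha}$, which is integrable on $[0,T]$) and by the $\e_0$-room in $0<\e_0<(\alpha-1)\wedge(2-\alpha)$.

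\textbf{Conclusion.} Given uniform smallness of the small-jump part and a uniform dominator for the large-jump part, Vitali's theorem applies and yields the continuity of $a\mapsto H(\cdot;a,\cdot)$. The main obstacle is making the small-jump estimate explicitly uniform in $a$ and vanishing as $\delta\to0$. The fallback, if Tsukada's bound is stated only for fixed $a$, is to redo his computation directly, using the stable density bound $p_s(z)\le Cs^{-1/\alpha}$ uniformly in $z$, which immediately makes everything uniform in the shift $a$.
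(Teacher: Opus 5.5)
Your argument is correct, but it takes a different route from the paper.

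\emph{The paper's route.} It never estimates the jump integrand directly. It applies the It\^{o} isometry in reverse, together with Tsukada's point Tanaka formula (\ref{Tsu-Tanaka}), so that the squared $L^2$ distance equals $\E_x[|M_T^{a_n}-M_T^a|^2]$. Writing $M_T^a=h(X_T-a)-h(x-a)-L_T^a$ then reduces continuity to three terminal-time quantities:
\begin{itemize}
\item $h(X_T-a)$, handled by dominated convergence with the dominator $C(1+|X_T|^{2(\alpha-1)})$, using $2(\alpha-1)<\alpha$;
\item $h(x-a)$, handled by continuity of $h$;
\item $L_T^a$, handled by Boylan's $L^2$ H\"older estimate \cite{Boy}.
\end{itemize}

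\emph{Your route.} You work on $\Omega\times[0,T]\times\R$ directly. You get pointwise convergence from continuity of $h$, and dominated convergence on $\{|y|>\delta\}$ from the $(\alpha-1)$-H\"older bound. On $\{|y|\le\delta\}$ you use a tail bound $\eta(\delta)\to0$, uniform in $a$, built from the density bound $p_s\le Cs^{-1/\alpha}$, the H\"older and derivative bounds on $h$, and $\Pi(dy)\asymp|y|^{-1-\alpha}dy$. The resulting integrals $\int_{|y|\le\delta}|y|^{\alpha-2}dy$, $\int_{|y|\le\delta}|y|^{1-\alpha}dy$ and $\int_0^T s^{-1/\alpha}ds$ are finite and vanish as $\delta\to0$. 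This works, and your fallback (redoing the computation with the shift-uniform density bound) is the right way to get uniformity in $a$. Tsukada's (3.13) is not needed.

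\emph{What each buys.} The paper's proof is short and structural. It would apply to any process with a point Tanaka formula whose martingale is an $L^2$ compensated-jump integral and whose local times are $L^2$-continuous in the space variable. The price is two external inputs: Tsukada's formula and Boylan's estimate. Your proof is self-contained, uses neither the Tanaka formula nor any local-time regularity, and gives a quantitative bound uniform over all shifts. The price is reliance on the explicit stable structure of $h$, $\Pi$ and $p_s$.

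\emph{Two small repairs.}
\begin{itemize}
\item State the large-jump dominator on $\{|y|>\delta\}$, not only on $\{|y|>1\}$. This is immediate, since $\Pi(|y|>\delta)<\infty$ and $|y|^{2\alpha-2}\le1$ for $|y|\le1$.
\item For $\alpha\ge3/2$, the term $\E_x[|X_s-a|^{2\alpha-4};|X_s-a|>2|y|]$ does not follow from the density bound alone, because $|z|^{2\alpha-4}$ is not integrable at infinity. Split at $|z|=1$ and use $|z|^{2\alpha-4}\le1$ beyond it. This leaves at most $Cs^{-1/\alpha}(1+|y|^{2\alpha-3}+\log(1/|y|))+1$, which is still harmless after multiplying by $|y|^2$ and integrating against $\Pi$ near zero.
\end{itemize}
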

\begin{proof}
Suppose that $a_n\to a$. By \ito's isometry for $\tilde{N}$ (see, e.g., Theorem 4.2.3 of \cite{App}) and (\ref{Tsu-Tanaka}), we have
  \begin{align*}
  \label{H-cont-eq}
  &\E_x\left[\int_0^T \int_{\R\setminus \{0\}}\Big|H(s;a_n,y)-H(s;a,y)\Big|^2\Pi(dy)ds\right]\\
  &\qquad =\E_x\left[\left|\int_0^T \int_{\R\setminus \{0\}}\Big(H(s;a_n,y)-H(s;a,y)\Big)\tilde{N}(ds,dy)\right|^2\right]\\
  &\qquad =\E_x\Big[|M_T^{a_n}-M_T^a|^2\Big]\\
  &\qquad \le 3\E_x\Big[|h(X_T-a_n)-h(X_T-a)|^2\Big]+3\E_x\Big[|L_T^{a_n}-L_T^{a}|^2\Big]\\
  &\qquad \qquad +3|h(x-a_n)-h(x-a)|^2.
      \stepcounter{equation}\tag{\theequation}
  \end{align*}
Since $h$ is continuous, the third term on the right-hand side of (\ref{H-cont-eq}) converges to $0$. By the H\"{o}lder continuity of $a\mapsto L_t^a$ (see (4.4) of Boylan \cite{Boy}), there exists a positive constant $K$ such that
\begin{align*}
  \E_x\Big[|L_T^{a_n}-L_T^{a}|^2\Big]\le  K \Big|\log |a_n-a|\Big|^2|a_n-a|^{\alpha-1}\to 0,
\end{align*}
as $n\to \infty$. Finally, we consider the first term on the right-hand side of (\ref{H-cont-eq}). By the definition of $h$, there exists a positive constant $C$ such that
 \begin{align*}
    |h(X_T-a_n)-h(X_T-a)|^2&\le C(1+|X_T|^{2(\alpha-1)}).
    \end{align*}
    Since $2(\alpha-1)<\alpha$ and stable processes have a finite $\eta$-th moment for every $\eta<\alpha$, the right-hand side is in $L^1(\P_x)$. Thus, by the dominated convergence theorem and the continuity of $h$, the first term on the right-hand side of (\ref{H-cont-eq}) converges to $0$. This completes the proof.
\end{proof}

Let $T>0,$ arbitrarily fixed and denote by $\mathscr{M}_T^2$ the space of real-valued, zero-mean, almost surely right-continuous, $L^2$-martingales on the finite time interval $[0,T]$. It is known that $\mathscr{M}_T^2$ is a Hilbert space with inner product $\langle M,N\rangle=\E_x[M_TN_T].$ We now study the convergence of $(S^{(n)})$ in this space.

\begin{lem}
\label{M2-convergence}
The sequence of $L^2$-martingales $(S_t^{(n)})_{0\le t\le T}$ converges to $(M_t^B)_{0\le t\le T}$ in $\mathscr{M}_T^2$.
\end{lem}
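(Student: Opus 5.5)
The plan is to reduce the convergence in $\mathscr{M}_T^2$ to convergence of integrands in $L^2(\P_x(d\omega)\otimes ds|_{[0,T]}\otimes \Pi(dy))$ via It\^{o}'s isometry. By linearity of the stochastic integral against $\tilde{N}$, the Riemann sum can be written as a single stochastic integral:
\begin{align*}
  S_t^{(n)}=\int_0^t\int_{\R\setminus\{0\}} H^{(n)}(s;y)\tilde{N}(ds,dy),\qquad H^{(n)}(s;y):=\sum_{i=1}^n H(s;\xi_i^{(n)},y)\mu_B(I_i^{(n)}).
\end{align*}
Since both $S^{(n)}$ and $M^B$ are zero-mean $L^2$-martingales, the $\mathscr{M}_T^2$-norm of their difference is
\begin{align*}
\E_x[|S_T^{(n)}-M_T^B|^2]=\E_x\left[\int_0^T\int_{\R\setminus\{0\}}|H^{(n)}(s;y)-\overline{H}(s;y)|^2\Pi(dy)ds\right].
\end{align*}
So it suffices to show $\|H^{(n)}-\overline{H}\|_{L^2}\to 0$, writing $\|\cdot\|$ for this $L^2$-norm.

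First, $\mu_B$ is a probability measure on $B=\bigcup_i I_i^{(n)}$. It has a density (previous subsection), so the endpoint $d$ carries no mass. Hence
\begin{align*}
H^{(n)}-\overline{H}=\sum_i\int_{I_i^{(n)}}\big(H(\cdot;\xi_i^{(n)},\cdot)-H(\cdot;a,\cdot)\big)\mu_B(da).
\end{align*}
Next, I apply Minkowski's inequality for integrals, which is the Jensen-type step already used to show that $M^B$ is well-defined. It requires joint measurability of $(s,\omega,a,y)\mapsto H$, which was noted above. This gives
\begin{align*}
\|H^{(n)}-\overline{H}\|\le \sum_i\int_{I_i^{(n)}}\|H(\cdot;\xi_i^{(n)},\cdot)-H(\cdot;a,\cdot)\|\,\mu_B(da)\le \sup_{|a-b|\le\|\pi^{(n)}\|,\ a,b\in B}\|H(\cdot;a,\cdot)-H(\cdot;b,\cdot)\|.
\end{align*}

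The key input is Lemma \ref{L2-conti}: the map $a\mapsto H(\cdot;a,\cdot)$ is continuous from the compact set $B$ into $L^2$, hence uniformly continuous. The last supremum therefore tends to $0$ as $\|\pi^{(n)}\|\to0$, which yields the claim.

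The main obstacle I expect is justifying the identification of $S^{(n)}$ with the integral of $H^{(n)}$ and the Minkowski step rigorously. The first only needs linearity for finitely many $L^2$ integrands, which is fine. For the second, I would state it as Minkowski's integral inequality on the product space. Alternatively, I would use the bound $\|\int f(a)\mu(da)\|^2\le\int\|f(a)\|^2\mu(da)$ (Jensen with a probability measure), which gives the same uniform bound without needing Bochner integrability.
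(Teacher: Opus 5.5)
Your proposal is correct and follows essentially the paper's proof: It\^{o}'s isometry reduces the claim to convergence of the integrands in $L^2(\P_x(d\omega)\otimes ds|_{[0,T]}\otimes \Pi(dy))$, Jensen's inequality with respect to the probability measure $\mu_B$ bounds the error by the modulus $w(\|\pi^{(n)}\|)$, and Lemma \ref{L2-conti} together with compactness of $B$ gives the uniform continuity that sends this modulus to $0$. You also state explicitly that $\mu_B(\{d\})=0$, which is needed because the half-open cells $I_i^{(n)}$ miss the endpoint $d$; the paper uses this only implicitly.
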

\begin{proof}
  We define
  \begin{align*}
    H_n(s;y):=\sum_{i=1}^n H(s;\xi_i^{(n)},y)\mu_B(I_i^{(n)}).
  \end{align*}
By \ito's isometry for $\tilde{N}$, we have
  \begin{align*}
    &\langle M^B-S^{(n)},M^B-S^{(n)}\rangle\\
    &\qquad =\E_x\Big[|M_T^B-S_T^{(n)}|^2\Big]\\
    &\qquad =\E_x\left[\left|\int_0^T\int_{\R\setminus \{0\}}\Big(\overline{H}(s;y)-H_n(s;y)\Big)\tilde{N}(ds,dy)\right|^2\right]\\
    &\qquad =\E_x\left[\int_0^T \int_{\R\setminus \{0\}}\Big|\overline{H}(s;y)-H_n(s;y)\Big|^2\Pi(dy)ds\right].
  \end{align*}
  By Jensen's inequality, we have
  \begin{align*}
  |\overline{H}-H_n|^2&=\left|\sum_{i=1}^n\int_{I_i^{(n)}}\Big(H(s;a,y)-H(s;\xi_i^{(n)},y)\Big)\mu_B(da)\right|^2\\
  &\le \sum_{i=1}^n \int_{I_i^{(n)}}\Big|H(s;a,y)-H(s;\xi_i^{(n)},y)\Big|^2\mu_B(da).
  \end{align*}
  We define
  \begin{align*}
    w(\delta):=\sup_{a,b\in B,\ |a-b|\le \delta}\E_x\left[\int_0^T \int_{\R\setminus \{0\}}\Big|H(s;a,y)-H(s;b,y)\Big|^2\Pi(dy)ds\right].
  \end{align*}
  Since $B$ is a compact set, Lemma \ref{L2-conti} implies that $a\mapsto H(\cdot;a,\cdot)$ is uniformly continuous on $B$ as an $L^2(\P_x(d\omega)\otimes ds|_{[0,T]}\otimes \Pi(dy))$-valued map, so we have $w(\delta)\to 0$ as $\delta\to 0$. Thus, we have
  \begin{align*}
     \langle M^B-S^{(n)},M^B-S^{(n)}\rangle&\le \sum_{i=1}^n\int_{I_i^{(n)}}w(|I_i^{(n)}|)\mu_B(da)\le w(\|\pi^{(n)}\|)\to 0
  \end{align*}
  as $n\to \infty$. This completes the proof.
\end{proof}

Consequently, by Doob's inequality and Lemma \ref{M2-convergence}, we have
\begin{align*}
  \E_x\left[\sup_{0\le t\le T}|M_t^B-S_t^{(n)}|^2\right]\le 4\E_x\Big[|M_T^B-S_T^{(n)}|^2\Big]\to 0.
\end{align*}
In particular, there exists a subsequence $(n_k, k\geq 1)$ such that the following uniform almost sure convergence holds.
\begin{align}
\label{Sn-uas}
  \sup_{0\le t\le T}|M_t^B-S_t^{(n_k)}|\to 0\qquad \text{a.s.}
\end{align}

We are now ready to prove Proposition \ref{main-martingale}.

\begin{proof}[The proof of Proposition \ref{main-martingale}]
  Substituting $a=\xi_i^{(n)}$ into (\ref{Tsu-Tanaka}), multiplying both sides by $\mu_B(I_i^{(n)})$, and summing over $i$, we have
  \begin{align}
\label{Riemannsum-Tanaka}
  \sum_{i=1}^n h(X_t-\xi_i^{(n)})\mu_B(I_i^{(n)})=\sum_{i=1}^n h(x-\xi_i^{(n)})\mu_B(I_i^{(n)})+S_t^{(n)}+\sum_{i=1}^nL_t^{\xi_i^{(n)}}\mu_B(I_i^{(n)}).
\end{align}
Since $h$ is continuous and $B$ is a bounded interval, the first term on the right-hand side of (\ref{Riemannsum-Tanaka}) satisfies
  \begin{align*}
    \sum_{i=1}^n h(x-\xi_i^{(n)})\mu_B(I_i^{(n)})\to \int_B h(x-a)\mu_B(da).
  \end{align*}
Next, we consider the left-hand side of (\ref{Riemannsum-Tanaka}). Since $X$ has \cadlag\ paths, there exists a random $R<\infty$ such that
  \begin{align*}
    X_t-a\in [-R,R]\qquad (0\le t\le T,\ a\in B).
  \end{align*}
We define
  \begin{align*}
    w_h(\delta):=\sup_{0\le t\le T,\ a,b\in B,\ |a-b|\le \delta}\Big|h(X_t-a)-h(X_t-b)\Big|.
  \end{align*}
Since $h$ is uniformly continuous on $[-R,R]$, we have almost surely
  \begin{align*}
    &\sup_{0\le t\le T}\left|\sum_{i=1}^n h(X_t-\xi_i^{(n)})\mu_B(I_i^{(n)})-\int_B h(X_t-a)\mu_B(da)\right|\\
    &\qquad \le \sup_{0\le t\le T}\sum_{i=1}^n \int_{I_i^{(n)}}\Big|h(X_t-\xi_i^{(n)})-h(X_t-a)\Big|\mu_B(da)\\
    &\qquad \le \sum_{i=1}^n \int_{I_i^{(n)}}w_h(|I_i^{(n)}|)\mu_B(da)\\
    &\qquad \le w_h(\|\pi^{(n)}\|)\to 0.
  \end{align*}

Finally, we consider the third term on the right-hand side of (\ref{Riemannsum-Tanaka}). For, we define
  \begin{align*}
    w_L(\delta):=\sup_{0\le t\le T,\ a,b\in B,\ |a-b|\le \delta}|L_t^a-L_t^b|.
  \end{align*}
  Note that a recurrent stable process has jointly continuous local times almost surely (see, e.g. Theorem V.15 of \cite{Ber}).
  Since $(t,a)\mapsto L_t^a$ is uniformly continuous on the compact set $[0,T]\times B$, we have
  \begin{align*}
   &\sup_{0\le t\le T}\left|\sum_{i=1}^n L_t^{\xi_i^{(n)}}\mu_B(I_i^{(n)})-\int_B L_t^a\mu_B(da)\right|\\
   &\qquad \le\sup_{0\le t\le T}\sum_{i=1}^n \int_{I_i^{(n)}}\Big|L_t^{\xi_i^{(n)}}-L_t^a\Big|\mu_B(da)\\
   &\qquad \le \sum_{i=1}^n \int_{I_i^{(n)}}w_L(|I_i^{(n)}|)\mu_B(da)\\
   &\qquad \le w_L(\|\pi^{(n)}\|)\to 0.
  \end{align*}
Hence, passing to the limit in (\ref{Riemannsum-Tanaka}) along the subsequence $(n_k, k\geq 1)$ where \eqref{Sn-uas} holds, then we obtain that for all $0\leq t \leq T,$
\begin{align*}
  \int_B h(X_t-a)\mu_B(da)=\int_B h(x-a)\mu_B(da)+M_t^B+\int_B L_t^a \mu_B(da).
\end{align*}
Subtracting $k(B)$ from both sides yields the claim. 
\end{proof}

%%%%%%%%%%%%%%%%%%%%%%%%%%%%%%%%%%%%%%%%%%%%%%%%%%%%%%%%%%%%%%%%%%%%%%%%%%%%%%%%%%%%%%%%%%%%%%%%%%%%%%%%%%%%%%%%%%%%%%%%%%%%%%%%%%%%%%%%%%%%%%%%%%%%%%%%%%%%%%%%%%%%%%%%%%%%%%%%%%%%%%%%%%%%%%%%%%%%%%%%%%%%%%%%%%%%%%%%%%%%%%%%%%%%%%%%%%%%%%%%%%%%%%%%%%%%%%%%%%%%%%%%%%%%%%%%%%%%%%%%%%%%%%%%%%%%%%%

\subsection{Recurrent Spectrally Negative \Levy\ Processes and the Interval Case}
\label{S6}
Let $X$ be a recurrent spectrally negative \Levy\ process. Its \Levy\ measure is supported by $(-\infty,0)$. Then recall that in this case
\begin{align*}
  h(x)=W(x)-\frac{x}{\E_0[X_1^2]},\qquad x\in \R,
\end{align*}
where $W$ denotes the zero-scale function of $X$. Note that $W(0)=0$, since $X$ has paths of unbounded variation, see, e.g., Lemma 8.6 of \cite{Kyp}.

In this case, the equilibrium measure and the Robin constant for the interval $B=[a,b]$ are given as follows:

\begin{prop}
If $\E_0[X_1^2]=\infty$, then we have
\begin{align*}
  \mu_{[a,b]}(dy)&:=\delta_b(dy),\\
  k([a,b])&:=0.
\end{align*}
If $\E_0[X_1^2]<\infty$, then we have on $[a,b]$ 
\begin{align*}
\label{SN-equibri}
  \mu_{[a,b]}(dy)&=\frac{\sigma^2}{2\E_0[X_1^2]}\delta_a(dy)+\frac{1}{\E_0[X_1^2]}\overline{\overline{\Pi}}(a-y)1_{(a,b)}dy\\
  &\qquad +\left(\frac{1}{2}+\frac{1}{2\E_0[X_1^2]}\int_{(-\infty,a-b)}(a-b-x)^2 \Pi(dx)\right)\delta_b(dy),
   \stepcounter{equation}\tag{\theequation}
\end{align*}and 
\begin{align*}
\label{SN-Robinconst}
k([a,b])&=\frac{b-a}{2\E_0[X_1^2]}+\frac{1}{(\E_0[X_1^2])^2}\int_{(a,b)}(y-a)\overline{\overline{\Pi}}(a-y)dy\\
&\qquad +\frac{b-a}{2(\E_0[X_1^2])^2}\int_{(-\infty,a-b)}(a-b-x)^2 \Pi(dx),
  \stepcounter{equation}\tag{\theequation}
\end{align*}
where $\Pi$ denotes the \Levy\ measure, $\overline{\Pi}(y):=\Pi(-\infty,y)$, and $\overline{\overline{\Pi}}(z):=\int_{-\infty}^z\overline{\Pi}(y)dy$.
\end{prop}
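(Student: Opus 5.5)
The plan is to avoid computing $\E_x[h(X_{T_B})]$ directly. Instead I will reduce everything to a characterization of $\mu_B$ on $B$ itself, and then check a candidate measure by a Laplace-transform computation.

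\emph{Step 1 (characterization).} Setting $x\in B$ in \eqref{main1-eq}, and using that $\varphi_B$ vanishes on $B$, gives
\begin{align*}
\int h(x-y)\mu_B(dy)=k(B),\qquad x\in B.
\end{align*}
Conversely, suppose $\mu$ is a probability measure on $B$ with $\int h(x-y)\mu(dy)=c$ for all $x\in B$. Put $\gamma:=\mu-\mu_B$, which has total mass $0$. Integrating the two identities against $\gamma$ gives $\iint h(x-y)\gamma(dx)\gamma(dy)=(c-k(B))\gamma(B)=0$. Arguing as in Lemma \ref{finite-lem2}, with $\gamma$ in place of the weight vector, the double integral equals
\begin{align*}
-\frac{1}{\pi}\int_0^\infty |\F(\gamma)(\lambda)|^2\,\Re\Big(\frac{1}{\Psi(\lambda)}\Big)d\lambda .
\end{align*}
Since $\Re(1/\Psi)>0$ a.e., this forces $\F(\gamma)=0$ a.e., hence $\gamma=0$ by continuity of $\F(\gamma)$. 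So $\mu=\mu_B$ and $k(B)=c$. The interchange of limit and integral for $h_q\to h$ needs a domination argument, since $\gamma$ now has a density; the bound \eqref{hq-bdd} should suffice. I expect this justification to be the technical obstacle. If it fails, I will fall back on Lemma \ref{lem-uniq} and the uniqueness proof of Theorem \ref{main1}.

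\emph{Step 2 (case $\E_0[X_1^2]=\infty$).} Here $h=W$, which vanishes on $(-\infty,0]$. For $\mu=\delta_b$ we get $\int h(x-y)\mu(dy)=W(x-b)=0$ for all $x\le b$. Step 1 then gives $\mu_B=\delta_b$ and $k(B)=0$.

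\emph{Step 3 (case $m_2:=\E_0[X_1^2]<\infty$).} Write $\mu$ for the candidate \eqref{SN-equibri}.
\begin{itemize}
\item Total mass. Using $m_2=\sigma^2+\int x^2\Pi(dx)$ and two integrations by parts, $\int_0^{b-a}\overline{\overline{\Pi}}(-z)\,dz$ plus the tail term equals $\frac12\int x^2\Pi(dx)$. Hence $\mu$ is a probability measure.
\item Reduction to a convolution identity. For $x\in[a,b]$ we have $W(x-y)=0$ when $y\ge x$, so only $\mu|_{[a,x)}$ contributes and the atom at $b$ drops out ($W(0)=0$). Thus
\begin{align*}
F(x):=\int h(x-y)\mu(dy)=\int_{[a,x)}W(x-y)\mu(dy)-\frac{x-\bar y}{m_2},\qquad \bar y:=\int y\,\mu(dy).
\end{align*}
It therefore suffices to show $\int_{[a,a+u)}W(u-(y-a))\mu(dy)=u/m_2$ for $u\ge 0$, taking the density on all of $(a,\infty)$.
\item Laplace transform. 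Since $\int_0^\infty e^{-\theta u}W(u)\,du=1/\psi(\theta)$, the identity is equivalent to $\widehat{\mu}(\theta)=\psi(\theta)/(m_2\theta^2)$. Because $X$ has mean zero,
\begin{align*}
\psi(\theta)=\frac{\sigma^2}{2}\theta^2+\int(e^{\theta x}-1-\theta x)\Pi(dx).
\end{align*}
Two integrations by parts give $\int(e^{\theta x}-1-\theta x)\Pi(dx)=\theta^2\int_0^\infty e^{-\theta z}\,\overline{\overline{\Pi}}(-z)\,dz$. Hence $\psi(\theta)/(m_2\theta^2)$ is exactly the transform of the atom $\sigma^2/(2m_2)$ at $a$ together with the density $\overline{\overline{\Pi}}(a-y)/m_2$. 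So $F$ is constant on $[a,b]$.
\end{itemize}

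\emph{Step 4 (Robin constant).} By Step 1, $k([a,b])=F(a)=\int (y-a)\mu(dy)/m_2$. Inserting \eqref{SN-equibri} yields \eqref{SN-Robinconst}: the atom at $a$ contributes $0$, the density contributes the middle term, and the atom at $b$ contributes the remaining two terms.
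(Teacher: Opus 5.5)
Your proof is correct, but it is organized differently from the paper's. The paper argues from necessary conditions. It puts $x=a$ into the identity $\int h(x-y)\mu_{[a,b]}(dy)=k([a,b])$ on $[a,b]$, which gives $k$ as the first moment of $\mu_{[a,b]}$ about $a$ divided by $\E_0[X_1^2]$ (resp.\ $k=0$). This reduces everything to the Volterra equation $\int_{[a,x)}W(x-y)\mu(dy)=(x-a)/\E_0[X_1^2]$ on $[a,b]$. In the infinite-variance case the paper then takes $x=b$ and uses $W>0$ on $(0,\infty)$ to get $\mu_{[a,b]}([a,b))=0$, so no uniqueness is needed there at all. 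In the finite-variance case it \emph{derives} the candidate rather than guessing it: it solves the equation extended to $[a,\infty)$ by Laplace transform, notes that this solution has mass $1/2$, and places the missing mass at $b$, where the equation does not see it.

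You instead guess both candidates and verify them, and that is exactly why you need your Step 1. Step 1 is the real difference, and it is worth having. The paper ends the finite-variance case by appealing to ``uniqueness of the equilibrium measure''. However, the uniqueness proved for Theorem \ref{main1} concerns representations of $\varphi_B$ on all of $\R$, while the candidate is only checked on $[a,b]$. What is actually needed there is uniqueness of a probability measure on $B$ whose $h$-potential is constant on $B$ (or, here, uniqueness for $W*\gamma=0$ on $[0,b-a]$, e.g.\ by a Titchmarsh-type argument). Your energy argument, the continuous analogue of Lemmas \ref{finite-lem2}--\ref{finite-lem3}, supplies this for every compact $B$.

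The obstacle you anticipate in Step 1 is harmless. Take the pointwise identity $h(z)+h(-z)=\frac{2}{\pi}\int_0^\infty(1-\cos\lambda z)\Re(1/\Psi(\lambda))\,d\lambda$ from the proof of Lemma \ref{finite-lem2}, then apply Fubini. This is justified because the integrand is nonnegative and bounded by $M(z)$, which is bounded on $B-B$. You should add one line for $\Re(1/\Psi)>0$ a.e.: $\Re(1/\Psi)=\Re\Psi/|\Psi|^2$, and the zero set of $\Re\Psi$ is countable unless $\sigma=0$ and $\Pi=0$, which \eqref{ConditionA} excludes. Your remaining computations (total mass, the Laplace identity for $\psi$, and $k=F(a)$) match the paper's.
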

\begin{proof}
First, we assume that $\E_0[X_1^2]=\infty$. In this case, we have $h(x)=W(x)$. For $x\in [a,b]$, we have
\begin{align*}
  k([a,b])=\int_{[a,b]} h(x-y)\mu_{[a,b]}(dy)=\int_{[a,x)} W(x-y)\mu_{[a,b]}(dy).
\end{align*}
Putting $x=a$, the right-hand side of the above is equal to zero. Thus, we have $k([a,b])=0$ and $\mu([a,b))=0$. Since $\mu_{[a,b]}$ is a probability measure, it follows that $\mu_{[a,b]}=\delta_b.$

Next, we assume that $\E_0[X_1^2]<\infty.$ For $x\in [a,b]$, we have
\begin{align*}
  k([a,b])&=\int_{[a,b]} h(x-y)\mu_{[a,b]}(dy)\\
  &=\int_{[a,x)} W(x-y)\mu_{[a,b]}(dy)-\frac{1}{\E_0[X_1^2]}\int_{[a,b]} (x-y)\mu_{[a,b]}(dy).
\end{align*}
Putting $x=a$, we have
\begin{align}
\label{SN-Robin}
  k([a,b])=\frac{1}{\E_0[X_1^2]}\int_{[a,b]} (y-a)\mu_{[a,b]}(dy).
\end{align}
Thus, we have
\begin{align}
\label{SN-equibrium}
  \int_{[a,x)} W(x-y)\mu_{[a,b]}(dy)=\frac{x-a}{\E_0[X_1^2]}\qquad \text{for}\ x\in [a,b].
\end{align}
To identify the measure $\mu_{[a,b]}$, we consider the following extension of (\ref{SN-equibrium}) to $x\ge a$:
\begin{align}
\label{SN-formally}
  \int_{[a,x)} W(x-y)\mu(dy)=\frac{x-a}{\E_0[X_1^2]},\qquad \text{for}\ x\ge a,
\end{align}
where $\mu$ is a measure on $[a,\infty)$. Taking the Laplace transform, for $\lambda>0$, the left-hand side of (\ref{SN-formally}) becomes
\begin{align*}
\int_a^\infty e^{-\lambda x}dx\int_{[a,x)} W(x-y)\mu(dy)&=\int_{[a,\infty)} \mu(dy)\int_y^\infty e^{-\lambda x}W(x-y)dx\\
&=\int_{[a,\infty)} e^{-\lambda y}\mu(dy)\int_0^{\infty }e^{-\lambda z}W(z)dz\\
&=\frac{1}{\psi(\lambda)}\int_{[a,\infty)} e^{-\lambda y}\mu(dy),
\end{align*}
where $\psi$ denotes the Laplace exponent of $X$. On the other hand, the Laplace transform of the right-hand side of (\ref{SN-formally}) is
\begin{align*}
\int_a^\infty e^{-\lambda x}\frac{x-a}{\E_0[X_1^2]}dx&=\frac{1}{\E_0[X_1^2]}\frac{e^{-\lambda a}}{\lambda^2}.
\end{align*}
Since $\E_0[X_1^2]<\infty$ and $X$ is recurrent, we have $\E_0[X_1]=0.$ Thus, we have
\begin{align*}
  \psi(\lambda)=\frac{1}{2}\sigma^2\lambda^2+\int_{(-\infty,0)}(e^{\lambda x}-1-\lambda x)\Pi(dx).
\end{align*}
By Fubini's theorem, we have
\begin{align*}
\psi(\lambda)&=\frac{1}{2}\sigma^2\lambda^2-\int_{(-\infty,0)}\Pi(dx)\int_x^0 (\lambda e^{\lambda y}-\lambda)dy\\
&=\frac{1}{2}\sigma^2\lambda^2-\int_{-\infty}^0 (\lambda e^{\lambda y}-\lambda)\overline{\Pi}(y)dy\\
&=\frac{1}{2}\sigma^2\lambda^2+\int_{-\infty}^0 \overline{\Pi}(y)dy\int_y^0 \lambda ^2 e^{\lambda z}dz\\
&=\frac{1}{2}\sigma^2 \lambda^2+\int_{-\infty}^0 \lambda^2 e^{\lambda z}\overline{\overline{\Pi}}(z)dz\\
&=\frac{1}{2}\sigma^2 \lambda^2+\int_0^\infty \lambda^2 e^{-\lambda z}\overline{\overline{\Pi}}(-z)dz.
\end{align*}
Therefore, we have
\begin{align*}
  \int_a^\infty e^{-\lambda y}\mu(dy)&=\frac{\psi(\lambda)}{\E_0[X_1^2]}\frac{e^{-\lambda a}}{\lambda^2}\\
  &=\frac{e^{-\lambda a}}{\E_0[X_1^2]}\left(\frac{\sigma^2}{2}+\int_0^\infty e^{-\lambda z}\overline{\overline{\Pi}}(-z)dz\right)\\
  &=\frac{\sigma^2e^{-\lambda a}}{2\E_0[X_1^2]}+\frac{1}{\E_0[X_1^2]}\int_a^\infty e^{-\lambda y}\overline{\overline{\Pi}}(a-y)dy.
\end{align*}
By the uniqueness of the Laplace transform, we have
\begin{align*}
  \mu(dy)=\frac{\sigma^2}{2\E_0[X_1^2]}\delta_a(dy)+\frac{1}{\E_0[X_1^2]}\overline{\overline{\Pi}}(a-y)1_{(a,\infty)}(y)dy.
\end{align*}
Since
\begin{align*}
  \mu([a,\infty))&=\frac{\sigma^2}{2\E_0[X_1^2]}+\frac{1}{\E_0[X_1^2]}\int_a^\infty \overline{\overline{\Pi}}(a-y)dy\\
  &=\frac{1}{\E_0[X_1^2]}\left(\frac{\sigma^2}{2}+\frac{1}{2}\int_{(-\infty,0)} x^2 \Pi(dx)\right)\\
  &=\frac{1}{2},
\end{align*}
the measure
\begin{align*}
  \mu(dy)1_{[a,b)}(y)+\delta_b(dy)\Big(1-\mu([a,b))\Big)
\end{align*}
is a probability measure supported on $[a,b]$ and satisfies (\ref{SN-equibrium}). Moreover, we have
\begin{align*}
1-\mu([a,b))&=\frac{1}{2}+\mu([b,\infty))\\
&=\frac{1}{2}+\frac{1}{\E_0[X_1^2]}\int_b^\infty \overline{\overline{\Pi}}(a-y)dy\\
&=\frac{1}{2}+\frac{1}{2\E_0[X_1^2]}\int_{(-\infty,a-b)}(a-b-x)^2 \Pi(dx).
\end{align*}
Thus, by the uniqueness of the equilibrium measure, we obtain (\ref{SN-equibri}). Moreover, by (\ref{SN-Robin}), we obtain the Robin constant (\ref{SN-Robinconst}). This completes the proof.
\end{proof}

%%%%%%%%%%%%%%%%%%%%%%%%%%%%%%%%%%%%%%%%%%%%%%%%%%%%%%%%%%%%%%%%%%%%%%%%%%%%%%%%%%%%%%%%%%%%%%%%%%%%%%%%%%%%%%%%%%%%%%%%%%%%%%%%%%%%%%%%%%%%%%%%%%%%%%%%%%%%%%%%%%%%%%%%%%%%%%%%%%%%%%%%%%%%%%%%%%%%%%%%%%%%%%%%%%%%%%%%%%%%%%%%%%%%%%%%%%%%%%%%%%%%%%%%%%%%%%%%%%%%%%%%%%%%%%%%%%%%%%%%%%%%%%%%%%%%%%%%

\section{Transient case}
\label{App}
For the sake of completeness, we deal with the transient case. We consider the following two conditions:
\begin{enumerate}
  \item[\textbf{(A1)}] The process $X$ is not a compound Poisson process.
  \item[\textbf{(A2)}] The point $0$ is regular for itself.
\end{enumerate}
It is known that condition \textbf{(\ref{ConditionA})} implies both \textbf{(A1)} and \textbf{(A2)}, see, e.g., page 7 of \cite{TY}. Throughout this section, we assume that $X$ is transient and satisfies conditions \textbf{(A1)} and \textbf{(A2)}. Under these assumptions, the local time $(L_t^a,t\ge 0)$ and the resolvent density $r_q(x)$ can be defined in the same way as in the recurrent case. However, in contrast to the recurrent case, the following limit is finite and strictly positive:
\begin{align*}
  \kappa:=\lim_{q\to 0}\frac{1}{r_q(0)}\in (0,\infty),
\end{align*}
see, e.g., Theorem I.17 of \cite{Ber}. Takeda-Yano \cite{TY} defined the renormalized zero resolvent $h$ in the transient case by
\begin{align*}
  h(x):=\lim_{q\to 0}(r_q(0)-r_q(-x))=\frac{1}{\kappa}\P_x(T_{\{0\}}=\infty),
\end{align*}
see Theorem 9.1 of \cite{TY}. Similarly, for a compact set $B$, we obtain
\begin{align}
\label{transient-varphi}
  \varphi_B(x):=\lim_{q\to 0}r_q(0)\E_x[1-e^{-qT_B}]=\frac{1}{\kappa}\P_x(T_B=\infty).
\end{align}

We next consider the equilibrium measure. For $q> 0$, the $q$-capacity measure $m_B^q$ of a compact set $B$ exists and satisfies
\begin{align*}
  \E_x[e^{-qT_B}]=\int r_q(y-x)m_B^q(dy),
\end{align*}
see, e.g., Proposition 42.13 of \cite{Sato}. Since the process $X$ is transient, there exist the zero resolvent density $r_0(x)$ and the zero-capacity measure $m_B^0(dy)$ such that
\begin{align}
\label{zero-capacity}
  \P_x(T_B<\infty)=\int r_0(y-x) m_B^0(dy),
\end{align}
see, e.g., Proposition 42.13 of \cite{Sato}. Moreover, $m_B^q$ converges weakly to $m_B^0$ and $r_q(x)\to r_0(x)$ for each $x$, see, e.g., Corollary II.8 of \cite{Ber}. This measure $m_B^0$ is also called the equilibrium measure. Note that $m_B^0$ is a finite measure, see, e.g., Theorem 42.8 of \cite{Sato}. By (\ref{zero-capacity}), we have
\begin{align*}
\varphi_B(x)&=\frac{1}{\kappa}-\frac{1}{\kappa}\P_{x}(T_B<\infty)\\
&=\frac{1}{\kappa}-\frac{1}{\kappa}\int r_0(y-x) m_B^0(dy)\\
&=\frac{1}{\kappa}+\frac{1}{\kappa}\int h(x-y)m_B^0(dy)-\frac{1}{\kappa}\int r_0(0)m_B^0(dy)\\
&=\frac{1}{\kappa}\int h(x-y)m_B^0(dy)-\frac{1}{\kappa}\left(\frac{C(B)}{\kappa}-1\right),
\end{align*}
where $C(B):=m_B^0(B)$ is the zero-capacity of $B$. This is the representation corresponding to (\ref{main1-eq}). Theorem \ref{main2} can also be proved in the same way in the transient case. In particular, by taking the Revuz measure $\nu$ to be $m_B^0$, we obtain the following Tanaka-type formula in the transient case:
\begin{align}
\label{Transient=Tanaka}
  \varphi_B(X_t)=\varphi_B(x)+\frac{1}{\kappa}M_t^{m_B^0}+\frac{1}{\kappa}\int_B L_t^y m_B^0(dy),
\end{align}
where $(M_t^{m_B^0})$ is a martingale. Multiplying both sides of (\ref{Transient=Tanaka}) by $\kappa$ and using (\ref{transient-varphi}), we can rewrite the formula in terms of hitting probabilities as follows:
\begin{align*}
  \P_{X_t}(T_B=\infty)=\P_x(T_B=\infty)+M_t^{m_B^0}+\int_B L_t^y m_B^0(dy),\qquad t\geq 0.
\end{align*}

%%%%%%%%%%%%%%%%%%%%%%%%%%%%%%%%%%%%%%%%%%%%%%%%%%%%%%%%%%%%%%%%%%%%%%%%%%%%%%%%%%%%%%%%%%%%%%%%%%%%%%%%%%%%%%%%%%%%%%%%%%%%%%%%%%%%%%%%%%%%%%%%%%%%%%%%%%%%%%%%%%%%%%%%%%%%%%%%%%%%%%%%%%%%%%%%%%%%%%%%%%%%%%%%%%%%%%%%%%%%%%%%%%%%%%%%%%%%%%%%%%%%%%%%%%%%%%%%%%%%%%%%%%%%%%%%%%%%%%%%%%%%%%%%%%%%%%%

\appendix
\section{Appendix: Proof of Lemmas in Subsection \ref{SS5.1}}
\label{App-lem}
In this section, we provide the proofs of the lemmas postponed above.

\begin{proof}[Proof of Lemma \ref{Stable-lemma1}]

First, suppose that $x<-1.$ By Lemma \ref{interval-lem} and the definition of $\mu$, we have
\begin{align*}
\label{int-compute1}
&\int_\R h(x-y)\mu(dy)=C_3 I(x),
  \stepcounter{equation}\tag{\theequation}
\end{align*}
where
\begin{align*}
  C_3&:=-\frac{\cos C_1\sin (\pi\alpha\rho)}{2^{1-\alpha}B(1-\alpha\hat{\rho},1-\alpha\rho)\Gamma(\alpha)\sin (\pi\alpha)},\\
  I(x)&:=\int_{-1}^1 (y-x)^{\alpha-1}(1-y)^{-\alpha\rho}(1+y)^{-\alpha \hat{\rho}}dy.
\end{align*}
Let us compute $I(x)$. Making the change of variable $s=\frac{(-x+1)t}{-x-1+2t}$ or equivalently $t=\frac{(-x-1)s}{-x+1-2s}$, we have
\begin{align*}
I'(x)&=-(\alpha-1)\int_{-1}^1 (y-x)^{\alpha-2}(1-y)^{-\alpha\rho}(1+y)^{-\alpha \hat{\rho}}dy\\
&=-(\alpha-1)2^{1-\alpha}\int_0^1 (2t-1-x)^{\alpha-2}(1-t)^{-\alpha\rho}t^{-\alpha \hat{\rho}}dt\qquad (y=2t-1)\\
&=-(\alpha-1)2^{1-\alpha}\int_0^1 \left(\frac{(1-x)(-x-1)}{-x+1-2s}\right)^{\alpha-2}\left(\frac{(1-x)(1-s)}{-x+1-2s}\right)^{-\alpha\rho}\\
&\qquad\qquad\qquad\qquad\qquad\qquad  \times \left(\frac{(-x-1)s}{-x+1-2s}\right)^{-\alpha \hat{\rho}}\frac{(-x-1)(1-x)}{(-x+1-2s)^2}ds\\
&=-(\alpha-1)2^{1-\alpha}(1-x)^{\alpha \hat{\rho}-1}(-x-1)^{\alpha \rho-1}\int_0^1 (1-s)^{-\alpha \rho}s^{-\alpha\hat{\rho}}ds\\
&=-(\alpha-1)2^{1-\alpha}(1-x)^{\alpha \hat{\rho}-1}(-x-1)^{\alpha \rho-1}B(1-\alpha \hat{\rho},1-\alpha\rho).
\end{align*}
Thus, we have
\begin{align*}
&I(-1-\e)-I(x)\\
&\qquad =\int_{x}^{-1-\e}I'(y)dy\\
&\qquad =-(\alpha-1)2^{1-\alpha}B(1-\alpha \hat{\rho},1-\alpha\rho)\int_x^{-1-\e}(1-y)^{\alpha \hat{\rho}-1}(-y-1)^{\alpha \rho-1}dy\\
&\qquad =-(\alpha-1)2^{1-\alpha}B(1-\alpha \hat{\rho},1-\alpha\rho)\int_{1+\e}^{|x|}(1+y)^{\alpha \hat{\rho}-1}(y-1)^{\alpha \rho-1}dy.
\end{align*}
Since
\begin{align*}
\lim_{\e\to 0}I(-1-\e)&=\lim_{\e\to 0}\int_{-1}^1 (y+1+\e)^{\alpha-1}(1-y)^{-\alpha\rho}(1+y)^{-\alpha \hat{\rho}}dy\\
&=\int_{-1}^1 (1-y)^{-\alpha \rho}(1+y)^{\alpha\rho-1}dy\\
&=B(\alpha\rho,1-\alpha \rho)\\
&=\frac{\pi}{\sin (\pi \alpha \rho)},
\end{align*}
we have
\begin{align*}
  I(x)&=C_4\int_{1}^{|x|}(1+y)^{\alpha \hat{\rho}-1}(y-1)^{\alpha \rho-1}dy+\frac{\pi}{\sin (\pi\alpha \rho)},
\end{align*}
where 
\begin{align*}
  C_4:=(\alpha-1)2^{1-\alpha}B(1-\alpha \hat{\rho},1-\alpha\rho).
\end{align*}
Hence, using (\ref{int-compute1}) and (\ref{phi-phi}), we obtain
\begin{align*}  
&\int_\R h(x-y)\mu(dy)\\
&\qquad =C_3C_4\int_{1}^{|x|}(1+y)^{\alpha \hat{\rho}-1}(y-1)^{\alpha \rho-1}dy+\frac{\pi C_3}{\sin(\pi\alpha\rho)}\\
&\qquad =C_3C_4\cdot \frac{1}{\cos C_1 \cdot K(\alpha,\rho)}\frac{\Gamma(\alpha\rho)}{\Gamma(1-\alpha\hat{\rho})}\cdot \varphi_{[-1,1]}(x)+\frac{\pi C_3}{\sin(\pi\alpha\rho)}.
\end{align*}
By the reflection formula for the Gamma function, we have
\begin{align*}
&C_3C_4\cdot \frac{1}{\cos C_1 \cdot K(\alpha,\rho)}\frac{\Gamma(\alpha\rho)}{\Gamma(1-\alpha\hat{\rho})}\\
&\qquad =-\frac{\cos C_1\sin (\pi\alpha\rho)}{2^{1-\alpha}B(1-\alpha\hat{\rho},1-\alpha\rho)\Gamma(\alpha)\sin (\pi\alpha)}\cdot (\alpha-1)2^{1-\alpha}B(1-\alpha \hat{\rho},1-\alpha\rho)\\
&\qquad \qquad \times \frac{\Gamma(1-\alpha \rho)\Gamma(1-\alpha \hat{\rho})}{\cos C_1\Gamma(2-\alpha)}\frac{\Gamma(\alpha\rho)}{\Gamma(1-\alpha\hat{\rho})}\\
&\qquad =\frac{\sin (\pi\alpha\rho)}{\sin (\pi\alpha)}\cdot \frac{\Gamma(\alpha\rho)\Gamma(1-\alpha \rho)}{\Gamma(\alpha)\Gamma(1-\alpha)}\\
&\qquad =\frac{\sin (\pi\alpha\rho)}{\sin (\pi\alpha)}\cdot \frac{\frac{\pi}{\sin (\pi \alpha\rho)}}{\frac{\pi}{\sin(\pi\alpha)}}\\
&\qquad =1.
\end{align*}
Therefore, we obtain (\ref{Stable-lemma1-eq}) for $x<-1$. The case $x>1$ is treated similarly. This completes the proof.
\end{proof}

\begin{proof}[Proof of Lemma \ref{Stable-lemma2}]
For $x\in (-1,1)$, we have
\begin{align*}
\int_\R h(x-y)\mu(dy)&=\hat{C}_3I_1(x)+C_3 I_2(x),
\end{align*}
where
\begin{align*}
  \hat{C}_3&:=-\frac{\cos C_1\sin (\pi\alpha\hat{\rho})}{2^{1-\alpha}B(1-\alpha\hat{\rho},1-\alpha\rho)\Gamma(\alpha)\sin (\pi\alpha)},\\
  I_1(x)&:=\int_{-1}^x (x-y)^{\alpha-1}(1-y)^{-\alpha\rho}(1+y)^{-\alpha \hat{\rho}}dy,\\
  I_2(x)&:=\int_x^1 (y-x)^{\alpha-1}(1-y)^{-\alpha \rho}(1+y)^{-\alpha\hat{\rho}}dy.
\end{align*}
It suffices to prove that the derivative of this function is zero. By Leibniz's rule and making the change of variable $y=\frac{(1+x)u-(1-x)}{(1+x)u+(1-x)}$, we have
\begin{align*}
I_1'(x)&=(\alpha-1)\int_{-1}^x (x-y)^{\alpha-2}(1-y)^{-\alpha\rho}(1+y)^{-\alpha \hat{\rho}}dy\\
&=(\alpha-1)\int_{0}^1\left(\frac{(1-x^2)(1-u)}{(1+x)u+(1-x)}\right)^{\alpha-2}\left(\frac{2(1-x)}{(1+x)u+(1-x)}\right)^{-\alpha \rho}\\
&\qquad \qquad \qquad \qquad \times \left(\frac{2(1+x)u}{(1+x)u+(1-x)}\right)^{-\alpha \hat{\rho}}\frac{2(1-x^2)}{((1+x)u+(1-x))^2}du\\
&=(\alpha-1)2^{1-\alpha}(1-x)^{\alpha \hat{\rho}-1}(1+x)^{\alpha \rho -1}\int_0^1 (1-u)^{\alpha-2}u^{-\alpha \hat{\rho}}du\\
&=(\alpha-1)2^{1-\alpha}(1-x)^{\alpha \hat{\rho}-1}(1+x)^{\alpha \rho -1}B(1-\alpha \hat{\rho},\alpha-1).
\end{align*}
Similarly, we have
\begin{align*}
I_2'(x)=-(\alpha-1)2^{1-\alpha}(1-x)^{\alpha \hat{\rho}-1}(1+x)^{\alpha \rho -1}B(\alpha-1,1-\alpha\rho).
\end{align*}
By the reflection formula for the Gamma function, we have
\begin{align*}
&\sin (\pi \alpha\hat{\rho})B(1-\alpha \hat{\rho},\alpha-1)-\sin (\pi\alpha\rho)B(\alpha-1,1-\alpha{\rho})\\
&\qquad =\sin (\pi \alpha\hat{\rho})\cdot \frac{\Gamma(\alpha-1)\Gamma(1-\alpha\hat{\rho})}{\Gamma(\alpha\rho)}-\sin (\pi\alpha\rho)\frac{\Gamma(\alpha-1)\Gamma(1-\alpha{\rho})}{\Gamma(\alpha\hat{\rho})}\\
&\qquad =\frac{\pi}{\Gamma(\alpha\hat{\rho})}\cdot \frac{\Gamma(\alpha-1)}{\Gamma(\alpha\rho)}-\frac{\pi}{\Gamma(\alpha\rho)}\cdot\frac{\Gamma(\alpha-1)}{\Gamma(\alpha\hat{\rho})}\\
&\qquad =0.
\end{align*}
Hence, we have
\begin{align*}
  \hat{C}_3I_1'(x)+C_3I_2'(x)=0\qquad \text{for}\ x\in (-1,1).
\end{align*}
Therefore, the function $x\mapsto\int_\R h(x-y)\mu(dy)$ is constant on $[-1,1]$. This completes the proof.
\end{proof}

%%%%%%%%%%%%%%%%%%%%%%%%%%%%%%%%%%%%%%%%%%%%%%%%%%%%%%%%%%%%%%%%%%%%%%%%%%%%%%%%%%%%%%%%%%%%%%%%%%%%%%%%%%%%%%%%%%%%%%%%%%%%%%%%%%%%%%%%%%%%%%%%%%%%%%%%%%%%%%%%%%%%%%%%%%%%%%%%%%%%%%%%%%%%%%%%%%%%%%%%%%%%%%%%%%%%%%%%%%%%%%%%%%%%%%%%%%%%%%%%%%%%%%%%%%%%%%%%%%%%%%%%%%%%%%%%%%%%%%%%%%%%%%%%%%%%%%%

\section*{Acknowledgements}
This research benefited greatly from insightful comments from Prof. Kouji Yano; the authors would like to thank him for this. The first author acknowledges support from JSPS KAKENHI Grant Number 26KJ1605. Part of this work was conducted during a visit of the first author to CIMAT. Both authors would like to thank CIMAT and the JSPS Open Partnership Joint Research Projects, Grant Number JPJSBP120209921 for the funding provided for this.

\bibliographystyle{plain}
%\bibliography{reference}

\begin{thebibliography}{10}

\bibitem{App}
D.~Applebaum.
\newblock {\em L\'evy processes and stochastic calculus}, volume 116 of {\em Cambridge Studies in Advanced Mathematics}.
\newblock Cambridge University Press, Cambridge, second edition, 2009.

\bibitem{Ber}
J.~Bertoin.
\newblock {\em L\'{e}vy processes}, volume 121 of {\em Cambridge Tracts in Mathematics}.
\newblock Cambridge University Press, Cambridge, 1996.

\bibitem{Boy}
E.~S. Boylan.
\newblock Local times for a class of {M}arkoff processes.
\newblock {\em Illinois J. Math.}, 8:19--39, 1964.

\bibitem{CD}
L.~Chaumont and R.~A. Doney.
\newblock On {L}\'{e}vy processes conditioned to stay positive.
\newblock {\em Electron. J. Probab.}, 10:no. 28, 948--961, 2005.

\bibitem{CW}
K.~L. Chung and R.~J. Williams.
\newblock {\em Introduction to stochastic integration}.
\newblock Modern Birkh\"auser Classics. Birkh\"auser/Springer, New York, second edition, 2014.

\bibitem{DMVolV-VII}
C.~Dellacherie, and P.A. Meyer.
\newblock {\em Probabilit\'es et Potentiel. Chapitres V {\'a} VII. Theorie des martingales}
\newblock Hermann, Paris, second edition, 1980.

\bibitem{DKW}
L.~D\"oring, A.~E. Kyprianou, and P.~Weissmann.
\newblock Stable processes conditioned to avoid an interval.
\newblock {\em Stochastic Process. Appl.}, 130(2):471--487, 2020.

\bibitem{EisenbaumWalsh}
\newblock N.~Eisenbaum, A.~Walsh.
\newblock An optimal Itô formula for \Levy\ processes. 
\newblock {\em Electron. Commun. Probab.} 14 202 - 209, 2009. 

\bibitem{fukushima}
M.~Fukushima. 
\newblock {\em A decomposition of additive functionals of finite energy.} 
\newblock Nagoya Math. J. 74 137–168, 1979.

\bibitem{fukushimaetal}
M.~Fukushima, and Y.~Oshima, and M.~Takeda, M. 
\newblock {\em Dirichlet Forms and Symmetric Markov Processes.}
\newblock de Gruyter Studies in Mathematics 19. Berlin: de Gruyter. 1994.

\bibitem{G}
R.~K. Getoor.
\newblock Continuous additive functionals of a {M}arkov process with applications to processes with independent increments.
\newblock {\em J. Math. Anal. Appl.}, 13:132--153, 1966.


\bibitem{Iba}
K.~Iba.
\newblock Conditioning to avoid bounded sets for a one-dimensional {L}\'evy processes, preprint, arXiv: 2501.02776.

\bibitem{Iba-Qmat}
K.~Iba.
\newblock Hitting probabilities of finite points for one-dimensional {L}\'{e}vy processes, preprint, arXiv: 2602.09342.

\bibitem{IY-3}
K.~Iba and K.~Yano.
\newblock Two-point local time penalizations with various clocks for {L}\'evy processes.
\newblock {\em ALEA Lat. Am. J. Probab. Math. Stat.}, 22(1):183--207, 2025.

\bibitem{Kaspi}
H.~Kaspi.
\newblock Excursions of {M}arkov processes: an approach via {M}arkov additive processes.
\newblock {\em Z. Wahrsch. Verw. Gebiete}, 64(2):251--268, 1983.

\bibitem{Klenke}
A.~Klenke.
\newblock {\em Probability theory---a comprehensive course}.
\newblock Universitext. Springer, Cham, third edition, [2020] \copyright 2020.

\bibitem{Kyp}
A.~E. Kyprianou.
\newblock {\em Fluctuations of {L}\'{e}vy processes with applications}.
\newblock Universitext. Springer, Heidelberg, second edition, 2014.
\newblock Introductory lectures.

\bibitem{Pa}
H.~Pant\'{\i}.
\newblock On {L}\'{e}vy processes conditioned to avoid zero.
\newblock {\em ALEA Lat. Am. J. Probab. Math. Stat.}, 14(2):657--690, 2017.

\bibitem{PStable}
S.~C. Port.
\newblock Hitting times and potentials for recurrent stable processes.
\newblock {\em J. Analyse Math.}, 20:371--395, 1967.

\bibitem{SY}
P.~Salminen and M.~Yor.
\newblock Tanaka formula for symmetric {L}\'evy processes.
\newblock In {\em S\'eminaire de {P}robabilit\'es {XL}}, volume 1899 of {\em Lecture Notes in Math.}, pages 265--285. Springer, Berlin, 2007.

\bibitem{Sato}
K.~Sato.
\newblock {\em L\'{e}vy processes and infinitely divisible distributions}, volume~68 of {\em Cambridge Studies in Advanced Mathematics}.
\newblock Cambridge University Press, Cambridge, 1999.
\newblock Translated from the 1990 Japanese original, Revised by the author.

\bibitem{Sharpe}
M.~Sharpe.
\newblock {\em General theory of {M}arkov processes}, volume 133 of {\em Pure and Applied Mathematics}.
\newblock Academic Press, Inc., Boston, MA, 1988.

\bibitem{Takeda}
S.~Takeda.
\newblock Sample path behaviors of {L}\'evy processes conditioned to avoid zero.
\newblock {\em J. Theoret. Probab.}, 37(4):3177--3201, 2024.

\bibitem{TY}
S.~Takeda and K.~Yano.
\newblock Local time penalizations with various clocks for {L}\'{e}vy processes.
\newblock {\em Electron. J. Probab.}, 28:Paper No. 12, 35, 2023.

\bibitem{Tukada}
H.~Tsukada.
\newblock A potential theoretic approach to {T}anaka formula for asymmetric {L}\'{e}vy processes.
\newblock In {\em S\'{e}minaire de {P}robabilit\'{e}s {XLIX}}, volume 2215 of {\em Lecture Notes in Math.}, pages 521--542. Springer, Cham, 2018.

\bibitem{T2}
H.~Tsukada.
\newblock Tanaka formula for strictly stable processes.
\newblock {\em Probab. Math. Statist.}, 39(1):39--60, 2019.

\bibitem{Walsh1}
A.~Walsh. 
\newblock Stochastic integration with respect to additive functionals of zero quadratic variation. 
\newblock {\em Bernoulli} 19 (5B) 2414 - 2436, 2013. 


\bibitem{Wink}
M.~Winkel.
\newblock Right inverses of nonsymmetric {L}\'evy processes.
\newblock {\em Ann. Probab.}, 30(1):382--415, 2002.

\bibitem{Yamada}
K.~Yamada.
\newblock Fractional derivatives of local times of {$\alpha$}-stable {L}evy processes as the limits of occupation time problems.
\newblock In {\em Limit theorems in probability and statistics, {V}ol. {II} ({B}alatonlelle, 1999)}, pages 553--573. J\'anos Bolyai Math. Soc., Budapest, 2002.

\bibitem{Yano3}
K.~Yano.
\newblock Excursions away from a regular point for one-dimensional symmetric {L}\'evy processes without {G}aussian part.
\newblock {\em Potential Anal.}, 32(4):305--341, 2010.

\bibitem{Yano2}
K.~Yano.
\newblock On harmonic function for the killed process upon hitting zero of asymmetric {L}\'{e}vy processes.
\newblock {\em J. Math-for-Ind.}, 5A:17--24, 2013.


\end{thebibliography}

\end{document}